\documentclass[11pt]{amsart}
\usepackage[T1]{fontenc}
\usepackage{lmodern}
\usepackage[protrusion=true,expansion=false]{microtype}
\usepackage{amsmath,amssymb,mathtools}
\usepackage{xcolor}
\usepackage[margin=1.15in]{geometry}
\usepackage[colorlinks=true,linkcolor=blue!45!black,citecolor=blue!45!black,urlcolor=blue!45!black]{hyperref}
\hypersetup{
  pdftitle={The Artin--Hasse p-section: weighted convolutions and p-adic recovery},
  pdfauthor={Ben Clare}
}

\newcommand{\Fp}{\mathbf F_p}
\newcommand{\Zp}{\mathbf Z_p}
\newcommand{\Qp}{\mathbf Q_p}
\newcommand{\AH}{\operatorname{AH}_p}
\newcommand{\ord}{\operatorname{ord}}
\newcommand{\dd}{\mathop{}\!\mathrm d}
\newcommand{\Wcal}{\mathcal W}
\newcommand{\vp}{v_p}
\newtheorem{theorem}{Theorem}[section]
\newtheorem{lemma}[theorem]{Lemma}
\newtheorem{corollary}[theorem]{Corollary}
\newtheorem*{corollary*}{Corollary}
\newtheorem{proposition}[theorem]{Proposition}
\theoremstyle{remark}
\newtheorem{remark}[theorem]{Remark}

\title[The Artin--Hasse $p$-section]{The Artin--Hasse $p$-section: weighted convolutions and $p$-adic recovery}
\author{Ben Clare}
\address{Independent researcher, United Kingdom}
\email{benclare@gmail.com}
\date{7 September 2026}

\subjclass[2020]{Primary 33E50; Secondary 11B68, 11B73, 11B85}
\keywords{Artin--Hasse exponential, Bernoulli numbers, Stirling numbers, Wilson quotient, weighted convolution, automatic sequence, Christol theorem, $p$-adic lifting, $p$-adic contraction, Witt coordinates}

\begin{document}
\begin{abstract}
Let $p$ be an odd prime and let $a_n\in\Fp$ be the reduction modulo $p$ of the $n$th coefficient of the Artin--Hasse exponential.  For the weighted $p$-section convolution
\[
 W_k=\sum_{r=0}^k(-1)^r r\,a_{rp}a_{(k-r)p},
\]
we prove the conjecture of Avitabile and Mattarei for $1<k<p$ and extend it to a global power-series identity determining the entire sequence $(W_k)_{k\geq0}$.  The identity yields an explicit base-$p$ digit formula; in particular, $(W_k)$ is $p$-automatic and admits a finite-state evaluator.  Independently, the exact conjugated $p$-section equation gives a strict $p$-adic fixed-point iteration for its logarithmic derivative.  Big-Witt reconstruction then recovers the $p$-section, and hence all Artin--Hasse coefficients, to arbitrary prescribed $p$-adic precision by a purely modular procedure.
\end{abstract}
\maketitle

\section{Introduction}
Fix an odd prime $p$.  The Artin--Hasse exponential is
\[
 \AH(X)=\exp\!\left(\sum_{i\geq0}\frac{X^{p^i}}{p^i}\right)
       =\sum_{n\geq0}u_nX^n\in\Zp[[X]].
\]
Write $a_n$ for the image of $u_n$ in $\Fp$, and put
\[
 G(X)=\sum_{n\geq0}(-1)^na_{np}X^n,
 \qquad
 W_k=\sum_{r=0}^{k}(-1)^r r\,a_{rp}a_{(k-r)p}.
\]
Avitabile and Mattarei \cite{AM} introduced
\[
 \gamma_p(X)=\sum_{n=1}^{p-2}\frac{B_n}{n}X^{p-n}\in\Fp[X],
\]
where $B_1=-1/2$, and conjectured that
\begin{equation}\label{eq:AMconj}
 W_k=\frac{B_{p-k}}{k}\qquad(1<k<p).
\end{equation}
They showed that this is equivalent to
\begin{equation}\label{eq:old-logform}
 X\frac{G'(X)}{G(X)}\equiv w_pX-\gamma_p(X)\pmod{X^p},
\end{equation}
where
\[
 w_p=\frac{(p-1)!+1}{p}\pmod p
\]
is the Wilson quotient.

The purpose of this paper is to replace the truncation in \eqref{eq:old-logform} by an identity in the full power-series ring.  Define
\begin{equation}\label{eq:qdef}
 q(X)=\sum_{i\geq0}X^{p^i}=X+X^p+X^{p^2}+\cdots\in\mathbf Z[[X]],
\end{equation}
and let
\[
 P_p(Z)=(Z)_p=Z(Z-1)\cdots(Z-p+1).
\]
Since $P_p(Z)\equiv Z^p-Z\pmod p$ and $q(X)^p=q(X)-X$ in $\Fp[[X]]$, the series $P_p(q(X))+X$ is coefficientwise divisible by $p$.  Hence
\begin{equation}\label{eq:Hdef}
 \mathcal H_p(X):=\frac{P_p(q(X))+X}{p}\in\mathbf Z[[X]]
\end{equation}
is well defined.  A bar will denote reduction modulo $p$.

The paper has two branches with a common source.  Proposition~\ref{prop:conjugated}, the exact conjugated $p$-section equation, is the shared engine: Sections~\ref{sec:global-proof}--\ref{sec:automatic} use it to study the weighted convolution, while the recovery argument in Section~\ref{sec:recovery} returns directly to the same equation $p$-adically.  The opening blind-spot proposition of that section uses the convolution identities only as motivation; the fixed-point and Witt-reconstruction proofs themselves do not depend on Theorems~\ref{thm:global}--\ref{thm:automatic}.  This is why the two developments are kept in one paper.

\subsection*{Part I: Weighted convolution and finite-state structure}
Our main result is global.

\begin{theorem}[Global weighted-convolution formula]\label{thm:global}
In $\Fp[[X]]$ one has
\begin{equation}\label{eq:global-log}
 X\frac{G'(X)}{G(X)}=\overline{\mathcal H_p(X)}
 =\overline{\frac{P_p(q(X))+X}{p}},
\end{equation}
and
\begin{equation}\label{eq:global-W}
 \Wcal(X):=\sum_{k\geq0}W_kX^k
 =\frac{X}{q(X)}\,\overline{\mathcal H_p(X)}.
\end{equation}
Equivalently, for every $k\geq0$,
\begin{equation}\label{eq:all-k-coeff}
 W_k=[X^k]\left\{\frac{X}{q(X)}\,
 \overline{\frac{P_p(q(X))+X}{p}}\right\}.
\end{equation}
\end{theorem}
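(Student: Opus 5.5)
The plan is to derive the exact $p$-section equation for $\AH$ over $\Zp$ from the differential equation $X\,\AH'(X)=q(X)\,\AH(X)$. This holds because $X\frac{d}{dX}\sum_i X^{p^i}/p^i=q(X)$. The truncation-free statement \eqref{eq:global-log} should then appear as the first-order (in $p$) shadow of that exact equation.

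Write $\AH(X)=\sum_{j=0}^{p-1}X^jU_j(X^p)$ with $U_j\in\Zp[[Y]]$, $Y=X^p$, and use $q(X)=X+q(Y)$. Comparing the components $X^j\,(\cdot)(Y)$ on both sides gives, with $D=d/dY$ and $U_{-1}:=YU_{p-1}$,
\[
 (j+pYD-q(Y))\,U_j=U_{j-1}\qquad(0\le j\le p-1).
\]
Composing these $p$ relations yields an exact operator identity
\[
 \Bigl[\prod_{j=0}^{p-1}\bigl(j+pYD-q(Y)\bigr)\Bigr]U_0=Y\,U_0,
\]
with the factors taken in the order forced by the recursion. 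I expect this to be, or to be equivalent to, Proposition~\ref{prop:conjugated}, and I would invoke it in that form. Modulo $p$ the identity collapses to $-(q^p-q)=Y$, which is automatic. The information about $U_0$ therefore sits at order $p$.

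Next I expand the product to first order in $p$. The zeroth-order term is $\prod_j(j-q)=-P_p(q)$. The first-order term collects one factor $pYD$ in each slot. Moving $D$ to the right past the remaining factors $(j-q)$ produces commutator terms $pYq'(Y)\times(\text{products of }(j-q))$, which are also of order $p$ and must be kept.

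Dividing the identity by $p$ and reducing, I aim for $Y\bar U_0'/\bar U_0=\pm\overline{\mathcal H_p(Y)}$ plus an explicit correction from those commutators. The following facts should make that correction cancel or combine with the sign change $Y\mapsto -X$ in passing from $\bar U_0$ to $G$: $\sum_k\prod_{j\ne k}(j-Z)\equiv -1\pmod p$, $\bar q'=1$, and $\bar q^p=\bar q-Y$.

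The main obstacle is this ordering bookkeeping. If it becomes unwieldy, I would first conjugate the recursion by a suitable unit series, namely the ``conjugated'' form, so that the factors commute to the needed order. Then $G(X)=\bar U_0(-X)$ gives \eqref{eq:global-log}.

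For \eqref{eq:global-W}, note that $\sum_r(-1)^r r\,a_{rp}X^r=XG'(X)$ and $\sum_s a_{sp}X^s=G(-X)$. Hence
\[
 \Wcal(X)=XG'(X)\,G(-X)=\overline{\mathcal H_p(X)}\;G(X)G(-X),
\]
so it suffices to prove $G(X)G(-X)=X/q(X)$ in $\Fp[[X]]$. Both sides have constant term $1$. By \eqref{eq:global-log}, the logarithmic derivative of $G(X)G(-X)$, multiplied by $X$, is $\overline{\mathcal H_p(X)}+\overline{\mathcal H_p(-X)}$. On the other side, $X\frac{d}{dX}\log(X/q)=1-X\bar q'/\bar q=1-X/\bar q$.

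So the identity reduces to $\overline{\mathcal H_p(X)}+\overline{\mathcal H_p(-X)}=1-X/\bar q(X)$. I would verify this directly from \eqref{eq:Hdef}, using $q(-X)=-q(X)$ (true since $p$ is odd) and $P_p(-Z)=-P_{p}(Z+p-1)$. This lets one compare $P_p(q)$ and $P_p(-q)$ modulo $p^2$ via a Taylor expansion in $p$, together with $\bar q^{p}=\bar q-X$.

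Uniqueness of power series with given constant term and logarithmic derivative in characteristic $p$ needs care, since that derivative only determines the series up to $p$-th powers. I would instead check $G(X)G(-X)\,q(X)/X$ directly, or fall back on the exact $\Zp$-level relation between $U_0(Y)$ and $U_0(-Y)$ coming from the same recursion. Finally, \eqref{eq:all-k-coeff} is just coefficient extraction from \eqref{eq:global-W}.
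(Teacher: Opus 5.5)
\textbf{Main gap: the product identity.} Formula \eqref{eq:global-W} rests entirely on the product identity $G(X)G(-X)q(X)=X$, and comparing logarithmic derivatives cannot establish it. That comparison only shows $G(X)G(-X)q(X)/X\in 1+X^p\Fp[[X^p]]$, as you observe yourself, and neither fallback you name is actually specified. The identity $\overline{\mathcal H_p(X)}+\overline{\mathcal H_p(-X)}=\bar q^{\,p-1}=1-X/\bar q$ that you plan to check is true, but it is only a consistency check.

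The paper does not prove the product identity; it imports it from Avitabile--Mattarei. For a self-contained argument, the missing input is the exact relation $\AH(X)\AH(-X)=1$ (true because $p$ is odd), combined with your component recursion read modulo $p$. With $Y=X^p$ and $Q=\bar q(Y)$, the recursion becomes $(j-Q)\bar U_j=\bar U_{j-1}$. Hence $\overline{\AH}(X)=\bar C(Y)\sum_{j=0}^{p-1}X^j\prod_{i=1}^{j}(i-Q)^{-1}$ and $\overline{\AH}(-X)=\bar C(-Y)\sum_{k=0}^{p-1}(-X)^k\prod_{i=1}^{k}(i+Q)^{-1}$. Take the part of $\overline{\AH}(X)\overline{\AH}(-X)=1$ supported on powers of $X^p$; only $j+k\in\{0,p\}$ contribute. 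Using $i+Q=-((p-i)-Q)$ and $\prod_{i=1}^{p-1}(i-Q)=-Y/Q$, you get $\bar C(Y)\bar C(-Y)\bigl(1-Q\sum_{j=1}^{p-1}(j-Q)^{-1}\bigr)=1$. Since $\sum_{j=0}^{p-1}(j-Q)^{-1}=(Q^p-Q)^{-1}=-1/Y$, the bracket equals $Q/Y$. So $\bar C(Y)\bar C(-Y)=Y/\bar q(Y)$, which is the product identity after $Y=-X$.

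\textbf{Error in the first half.} With $U=p\theta-q$, the composed identity $\prod_{j=0}^{p-1}(U+j)\,U_0=YU_0$ you wrote is false; it is the equation satisfied by $U_{p-1}$. The factors commute with one another, so order is not the issue. The point is that $Y(U+j)=(U+j-p)Y$, so eliminating $U_{p-1}$ through $UU_0=YU_{p-1}$ gives $\prod_{j=0}^{p-1}(U-j)\,U_0=P_p(U)U_0=YU_0$, i.e.\ Proposition~\ref{prop:conjugated}.

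This is not cosmetic. The polynomial $\prod_j(Z+j)=-P_p(-Z)$ agrees with $P_p(Z)$ only modulo $p$. Your zeroth-order term $-P_p(q)$ would yield $XG'/G=-\overline{\mathcal H_p(-X)}=\overline{\mathcal H_p(X)}-\bar q(X)^{p-1}$, already wrong at $X^{p-1}$ ($-\tfrac12$ instead of $\lambda_{p-1}=\tfrac12$). The commutator terms vanish modulo $p$ for either product, so they cannot absorb this discrepancy.

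With the correct operator the bookkeeping is short. On $\Zp[[Y]]$, modulo $p^2$, $P_p(U)\equiv P_p(-q)+pP_p'(-q)\theta-\tfrac p2P_p''(-q)M_{\theta q}$. Since $\overline{P_p'}=-1$, $\overline{P_p''}=0$ and $(P_p(-q)-Y)/p=\mathcal H_p(-Y)$, dividing $P_p(U)C=YC$ by $p$ gives $\theta\bar C=\overline{\mathcal H_p(-Y)}\,\bar C$. This is \eqref{eq:global-log} after $Y=-X$. It has the same content as the paper's argument via $P_p=Z^p-Z+p\Phi_p$ and Lemma~\ref{lem:operator-frob}. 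Deriving the exact equation from the component recursion rather than from $C=\mathcal BF$ is a legitimate alternative.
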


\begin{corollary*}[Avitabile--Mattarei conjecture]
For $k=1$ one has
\[
 W_1=w_p,
\]
and for $1<k<p$,
\[
 W_k=\frac{B_{p-k}}{k}\qquad\text{in }\Fp.
\]
In particular, Theorem~\ref{thm:global} proves the conjecture of Avitabile and Mattarei in its original range.
\end{corollary*}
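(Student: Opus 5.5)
The plan is to read off the first $p$ coefficients of \eqref{eq:all-k-coeff} from Theorem~\ref{thm:global}. For $k<p$ they reduce to Stirling numbers of the first kind, and these are then evaluated modulo $p^2$ through power sums and Bernoulli numbers.

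\emph{Step 1: truncation.} Since $q(X)=X+X^p+\cdots$, we have $X/q(X)=1-X^{p-1}+O(X^{2p-2})$. The series $P_p(q(X))+X$ has no constant term, because $P_p(0)=0$, so $\overline{\mathcal H_p}$ has zero constant term. Hence for $1\le k\le p-1$ the correction term $-X^{p-1}$ contributes only $[X^{0}]\overline{\mathcal H_p}=0$, and
\[
 W_k=[X^k]\,\overline{\mathcal H_p(X)}.
\]
Moreover $q(X)\equiv X\pmod{X^p}$ in $\mathbf Z[[X]]$, so $P_p(q(X))\equiv P_p(X)\pmod{X^p}$ already over $\mathbf Z$, before dividing by $p$. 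Writing $P_p(X)=\sum_k s(p,k)X^k$ with signed Stirling numbers, this gives
\[
 W_k\equiv \frac{s(p,k)+\delta_{k,1}}{p}\pmod p\qquad(1\le k\le p-1).
\]

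\emph{Step 2: $k=1$.} Here $s(p,1)=(-1)^{p-1}(p-1)!=(p-1)!$ because $p$ is odd. Therefore $W_1=\bigl((p-1)!+1\bigr)/p=w_p$.

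\emph{Step 3: $1<k<p$.} Set $m=p-k\in[1,p-2]$. Then $s(p,k)=(-1)^m e_m$, where $e_m$ is the $m$th elementary symmetric function of $1,\dots,p-1$. Let $S_j=\sum_{i=1}^{p-1}i^j$.

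\begin{itemize}
\item Faulhaber's formula, together with $p$-integrality of $B_i$ for $i<p-1$, gives $S_j\equiv pB_j\pmod{p^2}$ for $1\le j\le p-2$. For $j=1$ one checks directly that $S_1=p(p-1)/2\equiv -p/2=pB_1$. For $j\ge2$, the next Faulhaber term is divisible by $p^2$.
\item Every $S_j$ with $j\le p-2$ is divisible by $p$. So in Newton's identities all products of two or more power sums vanish modulo $p^2$. Since $m<p$, dividing by $m$ is harmless, and we get $e_m\equiv(-1)^{m-1}S_m/m\pmod{p^2}$.
\end{itemize}

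Combining these,
\[
 \frac{s(p,k)}{p}\equiv -\frac{B_{m}}{m}=-\frac{B_{p-k}}{p-k}\equiv\frac{B_{p-k}}{k}\pmod p,
\]
which is the Avitabile--Mattarei formula \eqref{eq:AMconj}.

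The main obstacle is Step 3. It needs the congruence $S_j\equiv pB_j\pmod{p^2}$ uniformly in $1\le j\le p-2$, including odd $j$ and the edge case $j=1$. It also needs the claim that Newton's identities collapse to the single linear term modulo $p^2$. I would verify the Faulhaber error terms carefully, using the von Staudt--Clausen denominators. The sign bookkeeping $(-1)^m(-1)^{m-1}=-1$ is the other point to double-check.
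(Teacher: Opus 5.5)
Your proof is correct. Its ingredients are the paper's own. Step 2 is exactly the paper's argument for $W_1$. Step 3 is the proof of Proposition~\ref{prop:PhiBern}: Faulhaber modulo $p^2$, then the collapse of Newton's identities. That collapse is legitimate because every denominator in the power-sum expansion of $e_m$ is a product of integers at most $m<p$. Your Faulhaber worry also resolves as you expect. After $pB_j$, the next term is $\tfrac{j}{2}B_{j-1}p^2$, and all later terms carry higher powers of $p$ with $p$-integral coefficients, since $j+1<p$ and $B_i$ is $p$-integral for $i<p-1$. Your sign check $(-1)^m(-1)^{m-1}=-1$ is right.

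The genuine difference is how you pass from Theorem~\ref{thm:global} to $W_k$ for $1<k<p$. The paper reduces the logarithmic identity \eqref{eq:global-log} modulo $X^p$ to obtain \eqref{eq:old-logform}. It then cites, as an external input, the Avitabile--Mattarei equivalence of \eqref{eq:old-logform} with \eqref{eq:AMconj}. You instead read the coefficients directly off \eqref{eq:global-W}, using $X/q(X)=1-X^{p-1}+O(X^{2p-2})$. The vanishing constant term of $\overline{\mathcal H_p}$ then handles the boundary index $k=p-1$, where $X/q(X)\not\equiv1\pmod{X^p}$; this is a point the paper's one-line truncation $X/q\equiv1\pmod{X^{p-1}}$ does not itself cover.

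What each route buys: yours is self-contained. The product identity \eqref{eq:AM-global-product} enters only once, inside Theorem~\ref{thm:global}, and the cited equivalence is in effect re-derived in one line. The paper's route is shorter and exhibits the result in its original logarithmic form \eqref{eq:old-logform}.
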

\begin{proof}
By \eqref{eq:Hdef}, since $q(X)=X+O(X^p)$ and the coefficient of $Z$ in $P_p(Z)$ is $(p-1)!$, the coefficient of $X$ in $\overline{\mathcal H_p(X)}$ is $w_p$.  Since $X/q(X)\equiv1\pmod{X^{p-1}}$ and $p\geq3$, \eqref{eq:global-W} therefore gives $W_1=w_p$.  Reducing \eqref{eq:global-log} modulo $X^p$ and using Proposition~\ref{prop:PhiBern} gives \eqref{eq:old-logform}.  Avitabile and Mattarei proved that \eqref{eq:old-logform} is equivalent to \eqref{eq:AMconj}.
\end{proof}

Thus there is no separate obstruction in the range $2p\leq k<3p$, or in any later range: all values are encoded by a single explicit power series.  The proof is based on the exact $p$-section
\[
 C(Y):=\sum_{n\geq0}u_{np}Y^n
\]
rather than on successive truncations of it.

The global formula admits an explicit coefficient extraction in every degree. Define
\begin{equation}\label{eq:lambdadef}
 \lambda_1=w_p,\qquad
 \lambda_d=\frac{B_{p-d}}d\quad(2\leq d\leq p-1).
\end{equation}
For a nonnegative integer
\[
 n=\sum_{i\geq0}d_i(n)p^i,\qquad 0\leq d_i(n)<p,
\]
write
\[
 s_p(n)=\sum_{i\geq0}d_i(n),\qquad
 f_p(n)=\prod_{i\geq0}d_i(n)!\in\Fp^\times.
\]
Set $\rho_p(0)=0$ and, for $n>0$,
\begin{equation}\label{eq:rho-def}
 \rho_p(n)=
 \begin{cases}
 \displaystyle \lambda_{s_p(n)}\frac{s_p(n)!}{f_p(n)},&1\leq s_p(n)\leq p-1,\\[6pt]
 \displaystyle -\frac1{f_p(n)},&s_p(n)=p,\\[6pt]
 0,&s_p(n)>p.
 \end{cases}
\end{equation}
Also set
\begin{equation}\label{eq:delta-def}
 \delta_p(0)=1,\qquad
 \delta_p(m)=-\binom{pm-1}{m-1}\in\Fp\quad(m\geq1).
\end{equation}

\begin{theorem}[Digital formula in all degrees]\label{thm:digital}
For every $k\geq0$,
\begin{equation}\label{eq:digital-W}
 W_k=\sum_{m=0}^{\lfloor k/(p-1)\rfloor}
 \delta_p(m)\,\rho_p\bigl(k-m(p-1)\bigr)\qquad\text{in }\Fp.
\end{equation}
Moreover, if
\[
 m-1=\alpha_0+\alpha_1p+\alpha_2p^2+\cdots,\qquad 0\leq \alpha_i<p,
\]
then Lucas' theorem gives
\begin{equation}\label{eq:delta-lucas}
 \delta_p(m)=-\binom{p-1}{\alpha_0}\binom{\alpha_0}{\alpha_1}
 \binom{\alpha_1}{\alpha_2}\cdots\qquad(m\geq1).
\end{equation}
In particular, $\delta_p(m)=0$ unless $\alpha_0\geq\alpha_1\geq\alpha_2\geq\cdots$.
\end{theorem}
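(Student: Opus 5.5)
The plan is to read off the coefficients of the two factors in \eqref{eq:global-W} separately. I will show that $X/q(X)=\sum_{m\ge0}\delta_p(m)X^{m(p-1)}$ in $\Fp[[X]]$, and that $[X^n]\overline{\mathcal H_p(X)}=\rho_p(n)$ for every $n\ge0$. Then \eqref{eq:digital-W} is just the Cauchy product of these two series, since $X/q(X)$ is supported on multiples of $p-1$.

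\emph{The factor $X/q(X)$.} In $\Fp[[X]]$ we have $q^p=q-X$, so $X/q=1-q^{p-1}$. To get the coefficients without dividing by multiples of $p$, I lift to characteristic zero.
\begin{itemize}
\item Let $Q(X)\in\mathbf Z[[X]]$ be the compositional inverse of $Z-Z^p$. Then $Q-Q^p=X$, and $\bar Q=q$ by uniqueness of the solution $X+O(X^2)$ of $Z-Z^p=X$ over $\Fp$.
\item Lagrange inversion gives
\[
[X^n]Q^{p-1}=\frac{p-1}{n}\,[Z^{n-p+1}](1-Z^{p-1})^{-n}.
\]
This vanishes unless $n=m(p-1)$ with $m\ge1$, in which case it equals $\tfrac1m\binom{pm-2}{m-1}$.
\item One checks $\tfrac1m\binom{pm-2}{m-1}=\binom{pm-1}{m-1}\tfrac{p-1}{pm-1}$, which is an integer congruent to $\binom{pm-1}{m-1}$ modulo $p$.
\item Hence $[X^{m(p-1)}](X/q)=-\binom{pm-1}{m-1}=\delta_p(m)$ for $m\ge1$, and the constant term is $1=\delta_p(0)$.
\end{itemize}
For \eqref{eq:delta-lucas}, write $pm-1=(p-1)+p(m-1)$. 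Its base-$p$ digits are $p-1,\alpha_0,\alpha_1,\dots$, while those of $m-1$ are $\alpha_0,\alpha_1,\dots$. Lucas' theorem then gives the stated product, and a factor $\binom{\alpha_i}{\alpha_{i+1}}$ vanishes as soon as $\alpha_{i+1}>\alpha_i$.

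\emph{The factor $\overline{\mathcal H_p}$.} Write $P_p(Z)=\sum_{j=1}^p s(p,j)Z^j$ with Stirling numbers of the first kind. Over $\mathbf Z$, $[X^n]q^j=\sum j!/\prod c_i!$, summed over $(c_i)$ with $\sum c_i=j$ and $\sum c_ip^i=n$. Any such representation has $\sum c_i\ge s_p(n)$.
\begin{itemize}
\item For $j\le p-1$, every $c_i<p$, so the representation must be the base-$p$ expansion. The coefficient is $j!/f_p(n)$ if $s_p(n)=j$ and $0$ otherwise.
\item For $j=p$, there are two kinds of representation. The base-$p$ expansion occurs when $s_p(n)=p$ and contributes $p!/f_p(n)$. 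A single $c_i=p$ occurs only when $n=p^{i+1}$ and contributes $1$.
\end{itemize}
Dividing by $p$ and reducing, coefficient by coefficient:
\begin{itemize}
\item If $s_p(n)>p$, both $[X^n]P_p(q)$ and $[X^n]X$ vanish, giving $0$.
\item If $s_p(n)=p$, we get $(p-1)!/f_p(n)\equiv-1/f_p(n)$.
\item If $2\le d=s_p(n)\le p-1$, we get $\tfrac{s(p,d)}{p}\,d!/f_p(n)$.
\item If $s_p(n)=1$, then $n=p^i$ and $f_p(n)=1$. The contributions are $(p-1)!$ from $j=1$, plus $1$ from either $j=p$ (when $i\ge1$) or the $+X$ term (when $i=0$). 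This gives $\tfrac{(p-1)!+1}{p}=w_p=\lambda_1$.
\end{itemize}

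The remaining and main step is the congruence
\[
\frac{s(p,d)}{p}\equiv\frac{B_{p-d}}{d}=\lambda_d\pmod p\qquad(2\le d\le p-1).
\]
Note $s(p,p-1)/p=-(p-1)/2\equiv1/2\equiv-B_1$, so the case $d=p-1$ (where $B_{p-d}=B_1$) carries a sign that needs care, and I would check that the paper's \eqref{eq:lambdadef} matches it. I expect this congruence to be exactly the content of Proposition~\ref{prop:PhiBern}, which identifies $\overline{\mathcal H_p}\bmod X^p$ with $w_pX-\gamma_p(X)$. Indeed, for $n=d<p$ we have $s_p(n)=d$ and $f_p(n)=d!$, so the coefficient of $X^d$ read off above is $s(p,d)/p$. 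Comparing with $-\gamma_p$ pins this down via the Bernoulli description of $\gamma_p$; I would invoke that proposition rather than reprove the classical Stirling–Bernoulli congruence.

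With both factors identified, $\rho_p(n)=[X^n]\overline{\mathcal H_p}$ for all $n$, and the convolution gives \eqref{eq:digital-W}.
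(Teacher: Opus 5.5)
Your proof is correct and follows the paper's route. It uses the factorization $\Wcal=(X/q)\,\overline{\mathcal H_p}$, an integral lift plus Lagrange inversion giving $\frac1m\binom{pm-2}{m-1}\equiv\binom{pm-1}{m-1}\pmod p$ (your $Q(X)^{p-1}$ is exactly the paper's $\mathcal U(X^{p-1})$), Lucas' theorem, and a digit-sum count in which Proposition~\ref{prop:PhiBern} supplies the constants $\lambda_d$.

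The differences are only in packaging. You expand $P_p(q)+X$ directly via Stirling numbers instead of splitting it as $\varphi_p(q)+\overline{D_p}$, and your $j=p$ analysis is Lemma~\ref{lem:D-digits}. The $d=p-1$ sign you flag does check out, since $\lambda_{p-1}=B_1/(p-1)\equiv-B_1=\tfrac12$ in $\Fp$.
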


Thus \eqref{eq:digital-W} is a finite scalar formula for every weighted convolution, expressed entirely through base-$p$ digits and the constants $\lambda_1,\ldots,\lambda_{p-1}$.

\begin{theorem}[Automaticity and finite-state evaluation]\label{thm:automatic}
The series $\Wcal(X)$ is algebraic over $\Fp(X)$, with
\[
 [\Fp(X,\Wcal):\Fp(X)]\leq p^2.
\]
Consequently the sequence $(W_k)_{k\geq0}$ is $p$-automatic.  More concretely, for each fixed $p$ there is an explicit finite-state digit procedure, given in Theorem~\ref{thm:weighted-automaton}, which evaluates $W_k$ from the base-$p$ expansion of $k$; after $p$-dependent preprocessing this requires $O(\log_p(k+2))$ state transitions.
\end{theorem}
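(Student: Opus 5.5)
The plan is to take Theorem~\ref{thm:global} as given and read algebraicity straight off \eqref{eq:global-W}. Over $\Fp$ we have $q^p=q-X$, so $q$ is algebraic of degree at most $p$ over $\Fp(X)$. In fact the degree is exactly $p$, since $Z^p-Z-X$ is Artin--Schreier and irreducible, although only the upper bound matters here. The remaining ingredient $\overline{\mathcal H_p}$ is not obviously a rational function of $q$, because of the division by $p$. So I would first lift. In $\mathbf Z[[X]]$ put $Q=q(X)$. Then $Q(X^p)=Q(X)-X$, and hence $Q^p-Q+X=Q^p-Q(X^p)$. I would try to express $\overline{(P_p(Q)+X)/p}$ as a polynomial in $\bar q$ and $\overline{\delta}$, where $\delta:=(Q^p-Q(X^p))/p\bmod p$, using $P_p(Z)=Z^p-Z+pR(Z)$ for an integer polynomial $R$. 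This gives
\[
\overline{\mathcal H_p}=\overline{R(q)}+\overline{\tfrac{Q^p-Q(X^p)}{p}}.
\]

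The second term is the first ghost/Witt-type quantity $\theta(X):=\overline{(Q(X)^p-Q(X^p))/p}$. I expect the main obstacle to be showing that $\theta$ is algebraic of degree at most $p$ over $\Fp(X,q)$. The plan is to derive a Frobenius functional equation for it. Reducing the identity $Q(X)=X+Q(X^p)$ and raising to the $p$th power gives
\[
Q(X)^p=(X+Q(X^p))^p=X^p+Q(X^p)^p+p\sum_{0<j<p}\tfrac1p\binom pj X^jQ(X^p)^{p-j}.
\]
Subtracting $Q(X^p)=X^p+Q(X^{p^2})$ should give
\[
\theta(X)=\theta(X^p)+\overline{\textstyle\sum_{0<j<p}\tfrac1p\binom pj X^jQ(X^p)^{p-j}}
=\theta(X)^p+c(X,q),
\]
because $\theta(X^p)=\theta(X)^p$ in characteristic $p$, and $c$ is a polynomial in $X$ and $q^p=q-X$. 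Thus $\theta$ satisfies an Artin--Schreier equation $\theta^p-\theta=-c$ over $\Fp(X,q)$, so its degree is at most $p$. The tower bound then gives $[\Fp(X,q,\theta):\Fp(X)]\le p^2$. Since $\Wcal=(X/q)(\overline{R(q)}+\theta)$ lies in this field, $[\Fp(X,\Wcal):\Fp(X)]\le p^2$.

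Christol's theorem then gives $p$-automaticity. For the concrete evaluator, I would not go through Christol abstractly but use Theorem~\ref{thm:digital}. Both $\rho_p(n)$ and $\delta_p(m)$ are computed by finite automata reading digits. For $\rho_p$ one tracks the digit sum capped at $p+1$ and the running product $f_p\in\Fp^\times$. For $\delta_p$ one uses the Lucas chain \eqref{eq:delta-lucas} and tracks the previous digit of $m-1$. The convolution over $m$ in \eqref{eq:digital-W} is the delicate part. I would handle it by a standard carry-state construction that reads $k$ digit by digit from the least significant end while nondeterministically guessing the digits of $m$ and the borrow in $k-m(p-1)=k+m-mp$. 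Summing the weights over accepting paths in $\Fp$ then yields a weighted automaton with a $p$-dependent but $k$-independent state set, and hence $O(\log_p(k+2))$ transitions. This weighted automaton is what Theorem~\ref{thm:weighted-automaton} records.
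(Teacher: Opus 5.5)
Your argument is correct, and its skeleton is the paper's. You write $\overline{\mathcal H_p}=\varphi_p(q)+\overline{D_p}$, prove an Artin--Schreier equation for $\overline{D_p}$ over $\Fp(X,q)$, apply the tower bound, and then invoke Christol. The genuine difference is how you obtain the Artin--Schreier equation.

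\textbf{The Artin--Schreier step.} The paper (Proposition~\ref{prop:Dbar-AS}) starts from the digit formula of Lemma~\ref{lem:D-digits}. It splits $\overline{D_p}=-\sum_{s_p(n)=p}X^n/f_p(n)$ according to the last base-$p$ digit of $n$ and resums each piece with Lemma~\ref{lem:q-small}. You instead stay in $\mathbf Z[[X]]$ and expand $Q(X)^p=(X+Q(X^p))^p$. This gives the exact integral identity $D_p(X)=D_p(X^p)+\sum_{0<j<p}\frac1p\binom pj X^jQ(X^p)^{p-j}$ before any reduction. Since $\frac1p\binom pj\equiv-1/(j!(p-j)!)\pmod p$, your $-c$ is precisely $\Psi_p(X,q)$, so you land exactly on \eqref{eq:Dbar-AS}.

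Your route is shorter and needs none of the digit lemmas. It also turns the Witt-addition interpretation, via the decomposition $Q=X+Q(X^p)$, into the actual mechanism; the paper only records that interpretation in a remark afterwards. The paper's route, in exchange, reuses digit machinery it needs anyway for Theorem~\ref{thm:digital} and for the automaton.

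\textbf{The evaluator.} Your sketch is the same idea as Theorem~\ref{thm:weighted-automaton}, but note that \eqref{eq:delta-lucas} concerns the digits of $m-1$ and only covers $m\geq1$. Guessing the digits of $m$ therefore forces an extra borrow chain and a separate $m=0$ case. The paper sidesteps both by reading $K=k-(p-1)$, guessing the digits of $t=m-1$ in $n+(p-1)t=K$, and splitting off the term $\rho_p(k)$. It also accumulates $1/d!$ as transition weights rather than tracking $f_p(n)$ as a state, which keeps the state set at size $p^2(p+1)$.

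One minor slip: $q$ is a root of $Z^p-Z+X$, not $Z^p-Z-X$. This is irrelevant, since only the upper bound on the degree is used.
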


For brevity, call the ranges $jp\leq k<(j+1)p$ the successive blocks of the weighted-convolution sequence.  In the range below $p^2-1$ the digital formula collapses to a particularly simple expression.  For $m\geq1$ put
\[
 \mathfrak h_m^{(1)}=\sum_{r=1}^m\frac1r\in\Fp,
\]
and for $j\geq0$ set
\begin{equation}\label{eq:cjdef}
 c_j=\begin{cases}
 (-1)^{j-2}\mathfrak h_{j-1}^{(1)},&2\leq j\leq p-1,\\
 0,&j=0,1\text{ or }j\geq p.
 \end{cases}
\end{equation}

\begin{theorem}[Explicit weighted convolutions below $p^2-1$]\label{thm:explicit-p2}
Let $1\leq k<p^2-1$, and write uniquely
\[
 k=d+j(p-1),\qquad 1\leq d\leq p-1,\quad j\geq0.
\]
Then
\begin{equation}\label{eq:explicit-p2}
 \boxed{W_k=\binom{d-1}{j}\lambda_d+\mathbf 1_{d=1}c_j}
 \qquad\text{in }\Fp,
\end{equation}
where $\binom{d-1}{j}=0$ for $j>d-1$.
\end{theorem}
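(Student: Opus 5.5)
The plan is to specialize the digital formula \eqref{eq:digital-W} of Theorem~\ref{thm:digital} to the range $k<p^2-1$. In this range every ingredient becomes elementary: the $\delta_p(m)$ reduce to signs, and the $\rho_p(n)$ reduce to binomial coefficients. The result should then follow from two finite sums, one alternating binomial sum and one harmonic sum coming from Wilson's theorem.

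\emph{Step 1: the factors $\delta_p(m)$.} Since $k<p^2-1$, we have $\lfloor k/(p-1)\rfloor\le p$, so only $0\le m\le p$ occurs. For $1\le m\le p$ the number $m-1$ has the single digit $\alpha_0=m-1\le p-1$, so \eqref{eq:delta-lucas} gives
\[
\delta_p(m)=-\binom{p-1}{m-1}=(-1)^m\quad\text{in }\Fp,
\]
and this also holds for $m=0$.

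\emph{Step 2: the factors $\rho_p(n)$.} Put $n=k-m(p-1)=d+b(p-1)$ with $b=j-m$. If $d=p-1$ and $m=j+1$, then $n=0$ and $\rho_p(0)=0$, so this term drops. Otherwise $0<n<p^2$, so $n=a+a'p$ with digits $a,a'$. Then $s_p(n)=a+a'\equiv n\equiv d\pmod{p-1}$ and $s_p(n)\le 2p-2$, so $s_p(n)\in\{d,\,d+p-1\}$.

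In the first case $s_p(n)=d$, so $a'=b$ and $a=d-b$. This is possible exactly when $0\le b\le d$, and then
\[
\rho_p(n)=\lambda_d\,\frac{d!}{(d-b)!\,b!}=\lambda_d\binom db.
\]
In the second case $s_p(n)=d+p-1$. This exceeds $p$ unless $d=1$, so it contributes $0$ for $d\ge2$. When $d=1$ it gives $s_p(n)=p$, $a'=b-1$, $a=p+1-b$, which requires $2\le b\le p$, and then
\[
\rho_p(n)=-\frac{1}{(p+1-b)!\,(b-1)!}.
\]

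\emph{Step 3: the main term.} Summing the first case over $b=j-m$ gives
\[
\lambda_d\sum_{b=0}^{\min(j,d)}(-1)^{j-b}\binom db .
\]
The standard partial alternating sum identity $\sum_{b=0}^{j}(-1)^b\binom db=(-1)^j\binom{d-1}{j}$ turns this into $\binom{d-1}{j}\lambda_d$. This identity holds over $\mathbf Z$, hence in $\Fp$. It also covers $j\ge d$: there the full alternating sum vanishes, and so does $\binom{d-1}{j}$.

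\emph{Step 4: the correction term for $d=1$.} This is the step I expect to need the most care, because of the sign bookkeeping. Write $p+1-b=(p-1)-(b-2)$ and use the Wilson consequence $(p-1-t)!\,t!\equiv(-1)^{t+1}$ with $t=b-2$. This gives
\[
\frac{1}{(p+1-b)!\,(b-1)!}=\frac{(-1)^{b-1}}{b-1}.
\]
Including the sign $\delta_p(m)=(-1)^{j-b}$ and the minus sign in $\rho_p$, the term indexed by $b$ becomes $(-1)^j/(b-1)$. Summing over $2\le b\le j$ yields $(-1)^j\mathfrak h^{(1)}_{j-1}=c_j$ for $2\le j\le p-1$, and yields $0$ for $j\le1$. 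The remaining case is $j=p$, which is the largest $j$ allowed by $k<p^2-1$ when $d=1$. There the sum is $(-1)^p\mathfrak h^{(1)}_{p-1}=0$, by $\sum_{r=1}^{p-1}1/r\equiv0$, matching $c_p=0$. Adding Steps 3 and 4 gives \eqref{eq:explicit-p2}.
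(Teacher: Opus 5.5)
Your proof is correct. It differs from the paper's in organization rather than in substance.

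The paper does not pass through Theorem~\ref{thm:digital}. It works directly with the split generating function \eqref{eq:split-W}, truncating $q(X)=X(1+z)+O(X^{p^2})$ and $X/q(X)\equiv(1+z)^{-1}$ with $z=X^{p-1}$. The first summand then becomes $\sum_d\lambda_dX^d(1+z)^{d-1}$, and $\binom{d-1}{j}\lambda_d$ is read off by the binomial theorem. For the second summand, $D_p\equiv X^p\sum_r\frac1p\binom pr z^r$ with $\frac1p\binom pr\equiv(-1)^{r-1}/r$ is multiplied by $z/(1+z)$, which produces the harmonic numbers.

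Your argument is the coefficientwise version of that computation. Your $\delta_p(m)=(-1)^m$ for $m\le p$ is the truncation of $X/q$. Your first-case values $\lambda_d\binom db$ are the coefficients of $\lambda_d q^d$. Your Wilson step $\frac{1}{(p+1-b)!(b-1)!}\equiv\frac{(-1)^{b-1}}{b-1}$ is the paper's $\frac1p\binom pr\equiv\frac{(-1)^{r-1}}{r}$ with $r=b-1$.

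What each route buys:
\begin{itemize}
\item The paper cancels one factor of $q$ against $X/q$ before extracting coefficients, so it needs no alternating partial-sum identity.
\item You extract coefficients first, so you must recombine with $\sum_{b\le j}(-1)^b\binom db=(-1)^j\binom{d-1}{j}$. In exchange, the theorem appears as a genuine specialization of the digital formula. Your dichotomy $s_p(n)\in\{d,\,d+p-1\}$ also shows transparently why the Frobenius-defect correction lives only on the ray $d=1$, and why $j=p$ is the only extra boundary case to check.
\end{itemize}
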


\begin{corollary}[Second block]\label{cor:first-second}
For $p\leq k\leq2p-1$,
\[
 W_k=\begin{cases}
 0,&k=p,\\[4pt]
 \dfrac{k-p}{k-p+1}B_{2p-1-k},&p<k<2p-1,\\[7pt]
 1,&k=2p-1.
 \end{cases}
\]
\end{corollary}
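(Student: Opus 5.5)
The plan is to specialize Theorem~\ref{thm:explicit-p2} to the window $p\leq k\leq 2p-1$. This is allowed because $2p-1<p^2-1$ for every odd prime $p$. The only work is to find the decomposition $k=d+j(p-1)$ with $1\leq d\leq p-1$ in each case, and then evaluate the binomial coefficient $\binom{d-1}{j}$ and the correction term $\mathbf 1_{d=1}c_j$ from \eqref{eq:cjdef}.

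First take $p\leq k\leq 2p-2$. Then $k-(p-1)\in[1,p-1]$, so $j=1$ and $d=k-p+1$.
\begin{itemize}
\item If $k=p$, then $d=1$. Here $\binom{0}{1}=0$, and $c_1=0$ by \eqref{eq:cjdef}, so $W_p=0$.
\item If $p<k\leq 2p-2$, then $2\leq d\leq p-1$. The indicator term vanishes, and $\binom{d-1}{1}=d-1$.
\end{itemize}
In the second subcase, \eqref{eq:lambdadef} with $d\geq 2$ gives
\[
W_k=(d-1)\frac{B_{p-d}}{d}=\frac{k-p}{k-p+1}\,B_{2p-1-k}.
\]
This is the middle line of the corollary. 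When $p=3$ this range is just $k=4$, with $d=2=p-1$ and $B_{2p-1-k}=B_1=-1/2$. This is still covered by \eqref{eq:lambdadef}, since $\lambda_d=B_{p-d}/d$ holds for all $2\leq d\leq p-1$.

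Next take $k=2p-1$. We have $2p-1=1+2(p-1)$, so $d=1$ and $j=2$. Then $\binom{0}{2}=0$. Since $2\leq p-1$, \eqref{eq:cjdef} gives $c_2=(-1)^0\mathfrak h^{(1)}_1=1$, so $W_{2p-1}=1$.

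There is no genuine obstacle. The only points to check are uniqueness of the representation at the boundary values $k=p$ and $k=2p-1$ (where $d=1$), the convention $\binom{d-1}{j}=0$ for $j>d-1$, and the small case $p=3$.
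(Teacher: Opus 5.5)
Your proposal is correct and follows essentially the same route as the paper: specialize Theorem~\ref{thm:explicit-p2} using $(d,j)=(1,1)$ for $k=p$, $(d,j)=(k-p+1,1)$ for $p<k<2p-1$, and $(d,j)=(1,2)$ for $k=2p-1$. Your additional checks on the range condition $2p-1<p^2-1$ and on the case $p=3$ are sound, and they only make explicit what the paper leaves implicit.
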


\begin{corollary}[Third block]\label{cor:third-block}
For $p\geq5$ and $2p\leq k\leq3p-1$,
\[
 W_k=\begin{cases}
 \dfrac{(k-2p)(k-2p+1)}{2(k-2p+2)}B_{3p-2-k},&2p\leq k\leq3p-3,\\[8pt]
 -\dfrac32,&k=3p-2,\\[6pt]
 0,&k=3p-1.
 \end{cases}
\]
For $p=3$ the same displayed values hold in $\mathbf F_3$ by Theorem~\ref{thm:global}.
\end{corollary}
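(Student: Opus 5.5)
The plan is to specialize Theorem~\ref{thm:explicit-p2} directly. For $p\geq5$ we have $3p-1<p^2-1$, since this is equivalent to $p^2-3p>0$. So every $k$ in the block $2p\leq k\leq 3p-1$ lies in the range of that theorem. The proof then reduces to three tasks for each such $k$: find the unique decomposition $k=d+j(p-1)$ with $1\leq d\leq p-1$, evaluate $\binom{d-1}{j}\lambda_d+\mathbf 1_{d=1}c_j$, and translate the answer back into the displayed form.

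\emph{Main range.} Write $k=2p+t$ with $0\leq t\leq p-3$. Then $k=(t+2)+2(p-1)$, so $d=t+2\in[2,p-1]$ and $j=2$. Because $d\geq2$, the harmonic term $c_j$ drops out. By \eqref{eq:lambdadef}, the remaining term is
\[
 \binom{t+1}{2}\lambda_{t+2}=\frac{t(t+1)}{2}\cdot\frac{B_{p-t-2}}{t+2}.
\]
Substituting $t=k-2p$ and $p-t-2=3p-2-k$ gives exactly the first case. When $t=0$ it correctly gives $0$, consistent with $B_{p}$ never being invoked.

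\emph{Endpoint $k=3p-2$.} Here $k=1+3(p-1)$, so $d=1$ and $j=3$. The binomial $\binom{0}{3}$ vanishes, leaving only $c_3$. Since $p\geq5$ we have $3\leq p-1$, so by \eqref{eq:cjdef}
\[
 c_3=(-1)^{1}\mathfrak h^{(1)}_2=-(1+\tfrac12)=-\tfrac32.
\]

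\emph{Endpoint $k=3p-1$.} Here $k=2+3(p-1)$, so $d=2$ and $j=3$. The term $\binom{1}{3}\lambda_2$ is $0$, and there is no $c$-term because $d\neq1$.

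The only delicate points are the following checks: that the decomposition is the unique one required by Theorem~\ref{thm:explicit-p2}, which holds because $d$ is forced by $k\bmod (p-1)$ together with $1\le d\le p-1$; and that the hypothesis $p\geq5$ is used both for $3p-1<p^2-1$ and for $j=3\leq p-1$ in the definition of $c_3$.

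For $p=3$ the block is $6\leq k\leq 8$, and $k=8=p^2-1$ falls outside Theorem~\ref{thm:explicit-p2}. In that case one instead expands $\frac{X}{q(X)}\,\overline{\mathcal H_3(X)}$ from \eqref{eq:global-W} modulo $X^9$ in $\mathbf F_3[[X]]$, using $q(X)=X+X^3+O(X^9)$ and $P_3(Z)=Z^3-3Z^2+2Z$, and reads off the coefficients of $X^6,X^7,X^8$. This finite computation is the one step requiring actual arithmetic, and it is routine.
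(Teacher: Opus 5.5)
Your proposal is correct and is essentially the paper's proof: you use the same decompositions $(d,j)=(k-2p+2,2)$, $(1,3)$ and $(2,3)$ in Theorem~\ref{thm:explicit-p2}, with $p\ge5$ guaranteeing $3p-1<p^2-1$ and $c_3=-\tfrac32$, and you fall back on direct extraction from \eqref{eq:global-W} for $p=3$. To finish the $p=3$ case you should state the outcome $W_6=W_7=W_8=0$ and note that every displayed value, including $-\tfrac32$, vanishes in $\mathbf F_3$; also, at $k=2p$ the Bernoulli number involved is $B_{p-2}$, not $B_p$, although the factor $k-2p=0$ makes this immaterial.
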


\subsection*{\texorpdfstring{Part II: $p$-adic recovery}{Part II: p-adic recovery}}
The same exact $p$-section equation also addresses the original coefficient problem, independently of the weighted-convolution consequences.  The weighted and unweighted convolution identities make the obstruction visible: together they recursively determine $a_{kp}$ except when $k$ is an odd multiple of $p$, exactly the characteristic-$p$ blind spot of the logarithmic derivative.  The $p$-adic form of the exact operator equation removes this blind spot at every scale.

\begin{theorem}[All-orders logarithmic lifting]\label{thm:all-orders}
Let
\[
 C(Y)=\sum_{n\geq0}u_{np}Y^n\in\Zp[[Y]],\qquad
 L(Y)=\frac{\theta C(Y)}{C(Y)}.
\]
There is an explicit $1$-Lipschitz map
\[
 \mathcal T_p:\Zp[[Y]]\longrightarrow\Zp[[Y]],
\]
defined solely from $q$, $P_p$ and $\theta$, such that
\begin{equation}\label{eq:intro-fixed}
 L=L_{0,p}+p\mathcal T_p(L),\qquad
 L_{0,p}=\Phi_p(-q)-D_p.
\end{equation}
Thus the fixed-point map $H\mapsto L_{0,p}+p\mathcal T_p(H)$ is a strict $p$-adic contraction.  If
\[
 L^{(0)}=L_{0,p},\qquad
 L^{(j+1)}=L_{0,p}+p\mathcal T_p(L^{(j)}),
\]
then
\[
 L^{(r-1)}\equiv L\pmod{p^r}\qquad(r\geq1).
\]
\end{theorem}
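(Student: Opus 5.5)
The plan is to make the exact conjugated $p$-section equation of Proposition~\ref{prop:conjugated} explicit at the level of logarithmic derivatives, without reducing modulo $p$. The defining identity $\AH(X)=\exp(\sum_i X^{p^i}/p^i)$ gives $\theta\AH/\AH=q(X)$ exactly in $\mathbf Z[[X]]$. The $p$-section is $C(X^p)=\frac1p\sum_{\zeta^p=1}\AH(\zeta X)$. Proposition~\ref{prop:conjugated} turns this into an operator equation: a relation between $C$, its $\theta$-derivatives up to order $p$, and $q$, in which the falling factorial $P_p$ enters through $\theta^p$ acting on the conjugated series. I will rewrite that equation in terms of $L=\theta C/C$. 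The point is that $C^{-1}\theta^jC$ is a universal polynomial in $L,\theta L,\dots,\theta^{j-1}L$ (Bell-type polynomials) with integer coefficients. The equation then takes the form
\[
\mathcal E(L)=\Phi_p(-q)-D_p
\]
with $\mathcal E(L)=L+p\,\mathcal R(L)$, where $\mathcal R$ is built from $q$, $P_p$ and $\theta$ only. Here I am taking $\Phi_p$ and $D_p$ to be the series the paper attaches to $P_p(q)$ and its correction term, so that $L_{0,p}$ is exactly the reduction already visible in Theorem~\ref{thm:global}. I would check consistency with that theorem: modulo $p$ the equation must give $\overline L=\overline{\mathcal H_p}$ after the sign change $Y\mapsto -X$ relating $C$ to $G$.

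The key algebraic step is to separate the linear term. Expanding $P_p(L+\theta)$-type expressions, one uses that the coefficients of $(Z)_p-Z^p+Z$ (more precisely, the elementary symmetric functions of $0,1,\dots,p-1$ other than the top one and $(p-1)!\equiv-1$) are divisible by $p$. This lets every term other than $L$ itself and the explicit $q$-dependent source be written as $p$ times an integral expression. I then define $\mathcal T_p(H):=-\mathcal R(H)$. Since $\mathcal R$ is a polynomial in $H,\theta H,\dots,\theta^{p-1}H$ and $q$ with $\Zp$-coefficients, and $\theta$ preserves $\Zp[[Y]]$, it follows that $\mathcal T_p$ maps $\Zp[[Y]]$ to itself.

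The $1$-Lipschitz property is then formal. If $H\equiv H'\pmod{p^r}$, then $\theta^iH\equiv\theta^iH'\pmod{p^r}$, because $\theta$ acts coefficientwise by multiplication by integers. Any integral polynomial expression preserves such congruences, so $\mathcal T_p(H)\equiv\mathcal T_p(H')\pmod{p^r}$. Hence $H\mapsto L_{0,p}+p\mathcal T_p(H)$ improves congruences by one power of $p$. The iteration claim follows by induction: $L^{(0)}\equiv L\pmod p$ by \eqref{eq:intro-fixed}, and $L^{(j)}\equiv L\pmod{p^{j+1}}$ implies
\[
L^{(j+1)}-L=p\bigl(\mathcal T_p(L^{(j)})-\mathcal T_p(L)\bigr)\equiv0\pmod{p^{j+2}}.
\]
Uniqueness of the fixed point follows from completeness of $\Zp[[Y]]$ under the coefficientwise $p$-adic metric.

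The main obstacle is the integrality bookkeeping in the rewriting step. Dividing the operator equation by $C$ and isolating $L$ must not introduce denominators, in particular no division by $p$ beyond the one that is justified. I expect to handle this by writing everything in the conjugated form $e^{-Q}\theta e^{Q}=\theta+\theta Q$, so that no exponential of a non-integral series appears explicitly, and by invoking the integrality already proved for $\mathcal H_p$ in \eqref{eq:Hdef}. A secondary subtlety is that $C(0)=1$ is a unit, which ensures that $L$ and all the Bell-polynomial expressions lie in $\Zp[[Y]]$.
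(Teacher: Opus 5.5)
\textbf{There is a genuine gap}, located exactly at the step you flag as the main obstacle.

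Conjugating Proposition~\ref{prop:conjugated} by the unit $C$ gives $P_p(T_L)1=Y$, with $T_L=M_f+p(\theta+M_L)$ and $f=-q$. Every occurrence of $\theta+M_L$ carries a factor $p$, so an expansion of ``$P_p(L+\theta)$'' is not the right object. Write $P_p(Z)=Z^p-Z+p\Phi_p(Z)$, and use $T_L1=f+pL$ and $f^p-f-Y=-pD_p$ (which uses $p$ odd). Then
\[
 pL=(T_L^p-M_f^p)1-pD_p+p\,\Phi_p(T_L)1 .
\]
The $p$-divisibility of the coefficients of $(Z)_p-Z^p+Z$ that you invoke controls only the last term. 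There $\Phi_p(T_L)-M_{\Phi_p(f)}$ is $p$ times an integral operator because $T_L-M_f=pE_L$. It says nothing about the $Z^p$ term. In $(M_f+pE_L)^p$, each word containing a single $E_L$ carries only one factor $p$. So without further input, dividing by $p$ gives only $L=\Phi_p(-q)-D_p+(\text{an integral series depending on }L)$. That is neither the asserted identity nor a contraction.

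\textbf{The missing idea} is the operator Frobenius congruence of Lemmas~\ref{lem:operator-frob} and~\ref{lem:fixed-operators}. For $\mu=M_f$, normal ordering gives
\[
 \sum_{i=0}^{p-1}\mu^iE\mu^{p-1-i}=p\mu^{p-1}E+\binom p2\mu^{p-2}M_{\theta f},
\]
and $p\mid\binom p2$ because $p$ is odd. Words with two or more $E$'s are visibly divisible by $p^2$. Hence $(T_H^p-M_f^p)/p^2$ is an integral noncommutative polynomial in $E_H=\theta+M_H$ whose coefficients do not depend on $H$. It follows that $\mathcal T_p(H)=\bigl((T_H^p-M_f^p)/p^2+(\Phi_p(T_H)-M_{\Phi_p(f)})/p\bigr)1$ is integral. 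Your proposal never uses that $p$ is odd, which is a symptom of this omission.

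Once this lemma is in place, the rest of your plan goes through. The $1$-Lipschitz property follows most cleanly from $E_H-E_{H'}=M_{H-H'}$, or from your description of $\mathcal T_p$ as an integral polynomial in $H,\theta H,\dots$ with coefficients built from $q,\theta q,\dots$. Your induction for $L^{(r-1)}\equiv L\pmod{p^r}$ is correct. One small correction: uniqueness of the fixed point comes from the contraction itself, since two fixed points agree modulo every $p^r$. Completeness is what gives existence, not uniqueness.
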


\begin{theorem}[Arbitrary-precision coefficient recovery]\label{thm:recovery}
Let $R\geq t\geq1$.  The logarithmic data $\theta C/C\pmod{p^R}$ determine, by an explicit finite divisor recursion in big Witt coordinates,
\[
 C(Y)\pmod{(p^t,Y^{p^{R-t+1}})}.
\]
Consequently every $p$-adic coefficient $u_{kp}$ is constructively recoverable to arbitrary precision: for prescribed $k$ and $t$, choose $R$ with $k<p^{R-t+1}$ and use Theorem~\ref{thm:all-orders}.  Taking $t=1$ recovers every residue $a_{kp}$.  Together with the usual coefficient recurrence at indices not divisible by $p$, this gives a purely modular algorithm for every coefficient of the Artin--Hasse exponential to any prescribed $p$-adic precision.
\end{theorem}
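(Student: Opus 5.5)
The plan is to pass from $C$ to its big Witt coordinates, where the logarithmic derivative becomes a ghost-component vector, and invert the ghost map by a divisor recursion whose $p$-adic precision loss is exactly $\vp(m)$ at index $m$. Since $C(Y)\in 1+Y\Zp[[Y]]$ (as $u_0=1$), there are unique $c_n\in\Zp$ with
\[
 C(Y)=\prod_{n\geq1}(1-c_nY^n)^{-1}.
\]
Applying $\theta=Y\frac{\dd}{\dd Y}$ gives
\[
 \frac{\theta C}{C}=\sum_{n\geq1}\frac{nc_nY^n}{1-c_nY^n}=\sum_{m\geq1}w_mY^m,\qquad w_m=\sum_{d\mid m}d\,c_d^{m/d}.
\]
So the input $\theta C/C\pmod{p^R}$ is exactly the list of residues $w_m\bmod p^R$. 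The recursion is
\[
 m\,c_m=w_m-\sum_{d\mid m,\ d<m}d\,c_d^{m/d},
\]
run in increasing order of $m$.

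The key step, and the one where I expect the only real work, is the valuation bookkeeping. I claim by strong induction on $m$ that $c_m$ is determined modulo $p^{\max(R-\vp(m),0)}$. Suppose each $c_d$ with $d\mid m$, $d<m$, is known modulo $p^{R-\vp(d)}$, and set $k=m/d$.

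\begin{itemize}
\item If $R-\vp(d)\leq0$, then $c_d$ is unknown, but the term $d\,c_d^{k}$ has valuation at least $\vp(d)\geq R$, so it does not matter modulo $p^R$.
\item Otherwise write $c_d=\tilde c+p^{R-\vp(d)}e$ with $e\in\Zp$. Then
\[
 d\,c_d^k-d\,\tilde c^k=\sum_{j\geq1}d\binom kj \tilde c^{k-j}p^{j(R-\vp(d))}e^j .
\]
The $j=1$ term has valuation at least $\vp(d)+\vp(k)+R-\vp(d)\geq R$. Each term with $j\geq2$ has valuation at least $\vp(d)+j(R-\vp(d))\geq R$.
\end{itemize}

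Hence the right-hand side, and so $m\,c_m$, is known modulo $p^R$. Dividing by $m=p^{\vp(m)}\cdot(\text{unit})$ determines $c_m$ modulo $p^{R-\vp(m)}$. This is the whole "explicit finite divisor recursion".

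Now take $N=p^{R-t+1}$. Every $n<N$ has $\vp(n)\leq R-t$, so $c_n$ is known modulo $p^t$. Moreover $C\bmod Y^N$ depends only on the finitely many factors $(1-c_nY^n)^{-1}$ with $n<N$. Expanding these modulo $(p^t,Y^N)$ gives $C\pmod{(p^t,Y^{p^{R-t+1}})}$, which proves the first assertion.

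For the consequences I would argue as follows.

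\begin{itemize}
\item Given $k$ and $t$, choose $R\geq t$ with $k<p^{R-t+1}$.
\item Theorem~\ref{thm:all-orders} says the iterate $L^{(R-1)}$, computed from $q$, $P_p$ and $\theta$ by $R-1$ applications of $\mathcal T_p$, agrees with $L=\theta C/C$ modulo $p^R$. Since $\mathcal T_p$ is $1$-Lipschitz, all these computations can be carried out in $(\mathbf Z/p^R)[[Y]]$ truncated at $Y^{p^{R-t+1}}$, so the procedure is purely modular.
\item Feeding this into the recursion above yields $u_{kp}\bmod p^t$. The case $t=1$ gives $a_{kp}$.
\item For indices $n$ with $p\nmid n$, use the standard recurrence from $\theta\AH=\AH\cdot q(X)'X$, namely $n u_n=\sum_{i\geq0}p^i\,u_{n-p^i}$ (up to the normalization $\theta(X^{p^i}/p^i)=X^{p^i}$). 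Because $n$ is a unit, this determines $u_n$ modulo $p^t$ from earlier coefficients modulo $p^t$. Interleaving it with the $p$-section data then computes every $u_n$ to the prescribed precision.
\end{itemize}
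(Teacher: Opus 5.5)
Your proposal is correct and follows the paper's route: big Witt factorization of $C$, the ghost-component divisor recursion losing $\vp(n)$ digits at index $n$, truncation to indices below $p^{R-t+1}$, and the iterate $L^{(R-1)}$ of Theorem~\ref{thm:all-orders} as input (the paper merely splits your division by $n=p^sm$ into division by $p^s$ followed by division by the unit $m$). One slip: $\theta\AH=q(X)\AH$, not $Xq'(X)\AH$, so the recurrence at $p\nmid n$ is $nu_n=\sum_{p^i\le n}u_{n-p^i}$ with no factors $p^i$; the argument only uses that $n$ is invertible, so it is unaffected.
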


More specifically, Section~\ref{sec:section} constructs the exact $p$-section and its conjugated hypergeometric equation, and Section~\ref{sec:operator} proves the global operator congruence modulo $p^2$.  Section~\ref{sec:global-proof} deduces \eqref{eq:global-log} and combines it with the exact product identity of Avitabile and Mattarei.  Section~\ref{sec:defect} recalls the Bernoulli description of the falling-factorial defect.  Section~\ref{sec:digital} extracts the global generating function by base-$p$ digits.  Section~\ref{sec:explicit} specializes that formula below $p^2-1$.  Section~\ref{sec:automatic} proves algebraicity, automaticity, and the finite-state evaluator.  Section~\ref{sec:recovery} develops the independent $p$-adic branch: it isolates the characteristic-$p$ blind spot, constructs the exact fixed-point map, and performs big-Witt reconstruction at arbitrary precision.  The closed third-order normal-ordering calculation is moved to Appendix~\ref{app:third-order}, where it can be consulted without interrupting the main recovery argument.  A short remark at the end of Section~\ref{sec:recovery} records the depth-dependent precision cost for iterated Cartier $p$-sections.  Section~\ref{sec:consequences} records further consequences and remarks.

As a computational check, the characteristic-$p$ formulas in Part~I, including the explicit weighted automaton of Theorem~\ref{thm:weighted-automaton}, were implemented directly from the stated recurrences and transition rules and checked against exact Artin--Hasse coefficient computations for $p=3,5,7,11,13$ over substantial finite ranges.  The fixed-point and Witt-recovery formulas in Part~II were likewise checked for $p=3,5,7$.

\paragraph{Notation guide.}
We reserve $a_n$ for the reduced Artin--Hasse coefficients.  The shared series $q$, the falling-factorial quotient $\Phi_p$, and the Frobenius defect $D_p$ are used in both branches.  In Part~I, $G$, $\mathcal W$, and $\mathcal R$ denote respectively the signed $p$-section, its weighted-convolution generating series, and the logarithmic derivative $XG'/G$.  In Part~II, $C$ is the integral $p$-section and $L=\theta C/C$ its logarithmic derivative.  Greek letters such as $\alpha_i$ and $\kappa_i$ are reserved for base-$p$ digits.

\section{\texorpdfstring{The exact $p$-section and its operator equation}{The exact p-section and its operator equation}}\label{sec:section}
Let $\theta=Y\,\dd/\dd Y$ and define
\begin{equation}\label{eq:Fdef}
 F(Y)=\sum_{m\geq0}\frac{Y^m}{(pm)!}.
\end{equation}
Also put
\begin{equation}\label{eq:Bdef}
 \mathcal B(Y)=\exp\!\left(\sum_{i\geq0}\frac{Y^{p^i}}{p^{i+1}}\right)
      =\AH(Y)^{1/p}\in\Qp[[Y]],
\end{equation}
where the fractional power is interpreted through formal exponential and logarithm.

\begin{lemma}[Exact $p$-section]\label{lem:exact-section}
If
\[
 C(Y)=\sum_{n\geq0}u_{np}Y^n\in\Zp[[Y]],
\]
then
\begin{equation}\label{eq:section-factor}
 C(Y)=\mathcal B(Y)F(Y).
\end{equation}
\end{lemma}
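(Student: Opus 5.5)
Write $\AH(X)=\exp(X)\cdot\exp\bigl(\sum_{i\geq1}X^{p^i}/p^i\bigr)$. The second factor is a power series in $X^p$. Setting $Y=X^p$, we have
\[
 \sum_{i\geq1}\frac{X^{p^i}}{p^i}=\sum_{j\geq0}\frac{Y^{p^j}}{p^{j+1}},
\]
so the second factor equals $\mathcal B(X^p)$ exactly. Thus the plan is to factor $\AH(X)=e^X\,\mathcal B(X^p)$ in $\Qp[[X]]$ and then apply the $p$-section operator. The $p$-section is taken in the $\Qp$-linear sense: $S(f)(Y)=\sum_n [X^{np}]f\cdot Y^n$.

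The key step is that $S$ commutes with multiplication by series in $X^p$. For any $g\in\Qp[[Y]]$ and $f\in\Qp[[X]]$,
\[
 S\bigl(g(X^p)f(X)\bigr)=g(Y)\,S(f)(Y).
\]
This follows coefficientwise: the coefficient of $X^{np}$ in $g(X^p)f(X)$ is $\sum_{j}g_j\,[X^{(n-j)p}]f$, because the monomials $X^{pj}$ shift exponents by multiples of $p$. This is a routine check on formal power series, and no convergence issues arise, since all manipulations are finite in each degree.

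Applying this with $g=\mathcal B$ and $f=e^X$ gives
\[
 C(Y)=S(\AH)(Y)=\mathcal B(Y)\,S(e^X)(Y)=\mathcal B(Y)\sum_{m\geq0}\frac{Y^m}{(pm)!}=\mathcal B(Y)F(Y),
\]
which is \eqref{eq:section-factor}. It remains to confirm that the definition \eqref{eq:Bdef} is consistent: $\mathcal B(Y)=\AH(Y)^{1/p}$ because $\log\AH(Y)=\sum_{i\geq0}Y^{p^i}/p^i$, and dividing by $p$ gives the exponent in \eqref{eq:Bdef}.

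There is no real obstacle. The only point needing care is that $\mathcal B$ and $F$ individually lie only in $\Qp[[Y]]$, not $\Zp[[Y]]$. The identity is therefore an identity in $\Qp[[Y]]$, and integrality of $C$ comes for free from $\AH\in\Zp[[X]]$.
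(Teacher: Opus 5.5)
Your proposal is correct and follows the paper's proof: factor $\AH(X)=e^X\mathcal B(X^p)$, observe that the second factor involves only powers of $X^p$, and extract the degree-$pm$ part of $e^X$ to get $F$. The only addition is that you state the projection identity $S\bigl(g(X^p)f(X)\bigr)=g(Y)\,S(f)(Y)$ explicitly and verify it coefficientwise, which the paper leaves implicit.
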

\begin{proof}
Separate the $e^X$ factor in the Artin--Hasse exponential:
\[
 \AH(X)=e^X\exp\!\left(\sum_{i\geq1}\frac{X^{p^i}}{p^i}\right)
       =e^X\mathcal B(X^p).
\]
The second factor contains only powers divisible by $p$.  Therefore, in extracting the terms of total degree divisible by $p$, the $e^X$ factor contributes exactly its terms $X^{pm}/(pm)!$.  Replacing $X^p$ by $Y$ gives \eqref{eq:section-factor}.
\end{proof}

The hypergeometric series $F$ satisfies an elementary differential equation.

\begin{lemma}\label{lem:F-operator}
One has
\begin{equation}\label{eq:F-operator}
 P_p(p\theta)F=YF.
\end{equation}
\end{lemma}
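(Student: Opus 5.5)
The plan is to compare coefficients monomial by monomial, using that $\theta$ acts diagonally on monomials: $\theta Y^m=mY^m$. Consequently any polynomial in $\theta$ acts by $Q(\theta)Y^m=Q(m)Y^m$. In particular,
\[
 P_p(p\theta)\,Y^m=(pm)(pm-1)\cdots(pm-p+1)\,Y^m=\frac{(pm)!}{(pm-p)!}\,Y^m
\]
for $m\geq1$. For $m=0$ the factor $pm=0$ appears, so $P_p(p\theta)Y^0=0$.

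Applying this termwise to $F(Y)=\sum_{m\geq0}Y^m/(pm)!$ (legitimate since $P_p(p\theta)$ is a linear operator acting coefficientwise on $\Qp[[Y]]$), the $m=0$ term vanishes and for $m\geq1$ we get
\[
 P_p(p\theta)\frac{Y^m}{(pm)!}=\frac{Y^m}{(p(m-1))!}.
\]
Summing over $m\geq1$ and reindexing $m\mapsto m+1$ gives
\[
 \sum_{m\geq0}\frac{Y^{m+1}}{(pm)!}=YF(Y),
\]
which is exactly \eqref{eq:F-operator}.

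There is no real obstacle here. The only points needing care are the vanishing of the constant term, which comes from the factor $Z$ in $P_p(Z)=Z(Z-1)\cdots(Z-p+1)$ evaluated at $Z=0$, and the observation that the falling factorial $(pm)_p$ cancels exactly the top $p$ factors of $(pm)!$. Both follow directly from the definitions.
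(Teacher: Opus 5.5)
Your proof is correct and is essentially the paper's own argument. Both evaluate $P_p(p\theta)$ on $Y^m/(pm)!$ to get $Y^m/(p(m-1))!$ for $m\geq1$, use $P_p(0)=0$ to kill the $m=0$ term, and reindex the sum to obtain $YF$.
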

\begin{proof}
For $m\geq1$,
\[
 P_p(pm)\frac{Y^m}{(pm)!}
 =\frac{Y^m}{(p(m-1))!},
\]
while the $m=0$ term is annihilated because $P_p(0)=0$.  Summing over $m$ gives the result.
\end{proof}

The point of using the full factor $\mathcal B$ is that its logarithmic derivative is the full $p$-typical series $q$:
\begin{equation}\label{eq:B-log}
 p\theta\log\mathcal B(Y)=\sum_{i\geq0}Y^{p^i}=q(Y).
\end{equation}
Define
\begin{equation}\label{eq:Udef}
 U=p\theta-q(Y),
\end{equation}
where $q(Y)$ denotes multiplication by the series $q(Y)$ when it occurs inside an operator expression.

\begin{proposition}[Exact conjugated equation]\label{prop:conjugated}
The integral series $C(Y)$ satisfies
\begin{equation}\label{eq:exact-U}
 P_p(U)C=YC
\end{equation}
as an identity in $\Qp[[Y]]$.
\end{proposition}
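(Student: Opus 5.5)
The plan is to conjugate the differential equation of Lemma~\ref{lem:F-operator} by the factor $\mathcal B$ from Lemma~\ref{lem:exact-section}. The basic fact is that for any series $f$ and any invertible series $\mathcal B$ one has, in $\Qp[[Y]]$,
\[
 p\theta(\mathcal B f)=\mathcal B\bigl(p\theta f\bigr)+\bigl(p\theta\mathcal B\bigr)f
 =\mathcal B\bigl(p\theta+p\theta\log\mathcal B\bigr)f .
\]
By \eqref{eq:B-log}, $p\theta\log\mathcal B=q(Y)$, so this reads $p\theta\circ\mathcal B=\mathcal B\circ(p\theta+q)$ as operators. Equivalently, rearranging gives
\[
 \mathcal B\circ p\theta = (p\theta-q)\circ\mathcal B = U\circ\mathcal B .
\]
This operator identity is the key step.

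Next I iterate. From $\mathcal B\circ p\theta=U\circ\mathcal B$, induction gives $\mathcal B\circ(p\theta-c)=(U-c)\circ\mathcal B$ for every constant $c$, because multiplication by $\mathcal B$ commutes with multiplication by constants. Composing these for $c=0,1,\dots,p-1$ then gives
\[
 \mathcal B\circ P_p(p\theta)=P_p(U)\circ\mathcal B .
\]
Here the factors of $P_p(U)=\prod_{c}(U-c)$ commute among themselves, so the order of the product does not matter. The same conclusion follows from $\mathcal B\circ (p\theta)^n=U^n\circ\mathcal B$ and linearity.

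Finally I apply this to $F$. Using Lemma~\ref{lem:exact-section} and then Lemma~\ref{lem:F-operator},
\[
 P_p(U)C=P_p(U)(\mathcal B F)=\mathcal B\,P_p(p\theta)F=\mathcal B\,YF=YC .
\]
The last equality holds because multiplication by $Y$ commutes with multiplication by $\mathcal B$.

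The only point needing care is well-definedness. Everything takes place in $\Qp[[Y]]$, where $\theta$ is a derivation, and $\mathcal B=\exp(\cdot)$ is a unit with $\theta\mathcal B=\mathcal B\,\theta\log\mathcal B$ formally. The operator $U$ preserves $\Qp[[Y]]$, so no convergence issues arise. I expect no genuine obstacle; the main thing to check is the sign in $U=p\theta-q$. Conjugation removes $+q$, which matches $\mathcal B^{-1}\circ p\theta\circ\mathcal B=p\theta+q$ being rewritten as $\mathcal B\circ p\theta\circ\mathcal B^{-1}=p\theta-q$.
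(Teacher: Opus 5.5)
Your proposal is correct and follows the paper's proof. Your intertwining relation $M_{\mathcal B}\circ p\theta=U\circ M_{\mathcal B}$, obtained from \eqref{eq:B-log}, is exactly the paper's conjugation identity $M_{\mathcal B}(p\theta)M_{\mathcal B}^{-1}=U$, and the paper likewise applies it to $P_p(p\theta)F=YF$ with $C=\mathcal B F$. You only make explicit the extension from $p\theta$ to the polynomial $P_p(p\theta)$, which the paper leaves implicit.
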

\begin{proof}
Let $M_h$ denote multiplication by a series $h$. Equation \eqref{eq:B-log} gives
\[
 M_{\mathcal B}(p\theta)M_{\mathcal B}^{-1}=p\theta-M_q=U.
\]
Since $C=\mathcal B F$, conjugating \eqref{eq:F-operator} by multiplication by $\mathcal B$ gives \eqref{eq:exact-U}.
\end{proof}

\section{\texorpdfstring{A global operator congruence modulo $p^2$}{A global operator congruence modulo p squared}}\label{sec:operator}
The next lemma is the mechanism which removes the need for blockwise truncation.

\begin{lemma}[Operator Frobenius congruence]\label{lem:operator-frob}
Let $f(Y)\in\Zp[[Y]]$, let $\mu$ be multiplication by $f$, and put $\delta=p\theta$.  Then, as operators on $\Zp[[Y]]$,
\begin{equation}\label{eq:operator-frob}
 (\mu+\delta)^p\equiv\mu^p\pmod{p^2}.
\end{equation}
\end{lemma}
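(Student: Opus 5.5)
The plan is to expand $(\mu+\delta)^p$ as a noncommutative polynomial in $\mu$ and $\delta$, i.e.\ as the sum over all $2^p$ words of length $p$ in the letters $\mu$ and $\delta$. Then I would sort the words by how many times the letter $\delta$ occurs.

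\emph{Words with at least two $\delta$'s.} The basic observation is that $\mu$ and $\theta$ both map $\Zp[[Y]]$ into itself, while $\delta=p\theta$ maps $\Zp[[Y]]$ into $p\Zp[[Y]]$. So any word containing at least two copies of $\delta$ sends $\Zp[[Y]]$ into $p^2\Zp[[Y]]$. All such words therefore vanish modulo $p^2$ as operators on $\Zp[[Y]]$. The word with no $\delta$ is $\mu^p$.

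\emph{Words with exactly one $\delta$.} This is the only case that needs attention. Their sum is
\[
 S=\sum_{i+j=p-1}\mu^i\delta\mu^j .
\]
I would move $\delta$ to the right using the Leibniz rule. As operators, $\delta\mu^j=\mu^j\delta+M_{\delta(f^j)}$, and $\delta(f^j)=p\,j\,f^{j-1}\theta f$. Hence
\[
 S=p\,\mu^{p-1}\delta+\sum_{j=0}^{p-1}p\,j\,f^{p-2}\theta f
  =p^2M_{f^{p-1}}\theta+p\cdot\frac{p(p-1)}2\,M_{f^{p-2}\theta f}.
\]
Both terms are divisible by $p^2$ as operators on $\Zp[[Y]]$; the second because $p$ is odd, so $(p-1)/2$ is an integer. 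Hence $S\equiv0\pmod{p^2}$, and combining the three cases gives $(\mu+\delta)^p\equiv\mu^p\pmod{p^2}$.

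There is no serious obstacle here. The only points to check are that all operators preserve integrality, so that ``$\equiv\pmod{p^2}$ as operators'' makes sense, and the commutator bookkeeping in the single-$\delta$ words. The fact that the number of such words equals $p$, which supplies the second power of $p$, is the essential mechanism.
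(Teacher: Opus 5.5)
Your proposal is correct and follows the paper's proof essentially step for step. You discard the words with at least two $\delta$'s, then normal-order the single-$\delta$ words via $\delta\mu^r=\mu^r\delta+pr\,\mu^{r-1}M_{\theta f}$ to obtain $p^2\mu^{p-1}\theta+p^2\frac{p-1}{2}\mu^{p-2}M_{\theta f}$, using that $p$ is odd. The only blemish is the missing multiplication-operator symbol $M$ in your intermediate sum over $j$, which is purely notational.
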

\begin{proof}
Expand $(\mu+\delta)^p$ as a sum of noncommutative words.  Every word containing at least two occurrences of $\delta=p\theta$ is visibly divisible by $p^2$ as an endomorphism of $\Zp[[Y]]$.  The word containing no $\delta$ is $\mu^p$.

It remains to consider the sum of the words containing exactly one $\delta$:
\[
 S=\sum_{i=0}^{p-1}\mu^i\delta\mu^{p-1-i}.
\]
Since
\[
 \delta\mu^r=\mu^r\delta+pr\,\mu^{r-1}M_{\theta f},
\]
where $M_{\theta f}$ denotes multiplication by $\theta f$, we obtain
\[
 \begin{aligned}
 S
 &=p\mu^{p-1}\delta
   +p\left(\sum_{r=0}^{p-1}r\right)\mu^{p-2}M_{\theta f}\\
 &=p^2\mu^{p-1}\theta
   +p^2\frac{p-1}{2}\mu^{p-2}M_{\theta f}.
 \end{aligned}
\]
Because $p$ is odd, this is divisible by $p^2$.  Thus all terms except $\mu^p$ vanish modulo $p^2$.
\end{proof}

Lemma~\ref{lem:fixed-operators} below contains this congruence as the case $H=0$ and gives the normalized form used in the all-orders fixed-point argument.

Applying this with $f=-q(Y)$ gives the global replacement for the two truncated congruences used in the blockwise approach.

\begin{corollary}\label{cor:Up}
For the operator $U=p\theta-q(Y)$ of \eqref{eq:Udef},
\begin{equation}\label{eq:Up}
 U^p\equiv-q(Y)^p\pmod{p^2}
\end{equation}
as operators on $\Zp[[Y]]$.
\end{corollary}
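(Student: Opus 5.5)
This is an immediate specialization of Lemma~\ref{lem:operator-frob}. I would take $f=-q(Y)$, which lies in $\mathbf Z[[Y]]\subset\Zp[[Y]]$, so the lemma applies with $\mu=M_{-q}$ and $\delta=p\theta$. Then $\mu+\delta=p\theta-M_q=U$ by \eqref{eq:Udef}, and the lemma gives
\[
 U^p=(\mu+\delta)^p\equiv\mu^p\pmod{p^2}
\]
as endomorphisms of $\Zp[[Y]]$.

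The remaining step is to identify $\mu^p$. Multiplication operators compose multiplicatively, so $\mu^p=M_{(-q)^p}$. Since $p$ is odd, $(-q)^p=-q^p$, and hence $\mu^p=-M_{q^p}$, which is the operator written $-q(Y)^p$ in \eqref{eq:Up}.

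The only points that need care are the oddness of $p$, used in the sign, and the meaning of the congruence. Here ``$\equiv\pmod{p^2}$'' means that the difference of the two operators sends $\Zp[[Y]]$ into $p^2\Zp[[Y]]$, which is exactly the sense in which Lemma~\ref{lem:operator-frob} is stated. There is no real obstacle: all the work (the noncommutative expansion, and the vanishing of the single-$\delta$ words through the factor $\sum r=p(p-1)/2$) was already done in the lemma.
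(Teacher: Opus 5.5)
Your proposal is correct and is exactly the paper's argument: specialize Lemma~\ref{lem:operator-frob} to $f=-q(Y)$, so that $\mu+\delta=U$, and use that $p$ is odd to get $\mu^p=M_{(-q)^p}=-M_{q^p}$.
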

\begin{proof}
Apply Lemma~\ref{lem:operator-frob} with $f=-q(Y)$. Since $p$ is odd, $\mu^p=M_{(-q)^p}=M_{-q^p}$.
\end{proof}

The second correction which previously appeared one block at a time is naturally packaged by the Frobenius defect
\begin{equation}\label{eq:Ddef}
 D_p(Y):=\frac{q(Y)^p-q(Y^p)}{p}
        =\frac{q(Y)^p-q(Y)+Y}{p}\in\mathbf Z[[Y]].
\end{equation}
The integrality follows from the freshman's-dream congruence $q(Y)^p\equiv q(Y^p)\pmod p$.

\section{The global logarithmic derivative and weighted convolution}\label{sec:global-proof}
Recall the normalized Fermat defect
\begin{equation}\label{eq:PhiDef}
 \Phi_p(Z)=\frac{P_p(Z)-(Z^p-Z)}{p}\in\mathbf Z[Z],
 \qquad \varphi_p(Z):=\overline{\Phi_p(Z)}\in\Fp[Z].
\end{equation}
Thus
\begin{equation}\label{eq:P-decompose}
 P_p(Z)=Z^p-Z+p\Phi_p(Z).
\end{equation}

\begin{proposition}[Global logarithmic derivative]\label{prop:global-log}
In $\Fp[[X]]$,
\begin{equation}\label{eq:global-intermediate}
 X\frac{G'(X)}{G(X)}
 =\varphi_p(q(X))+\overline{D_p}(X)
 =\overline{\mathcal H_p(X)}.
\end{equation}
\end{proposition}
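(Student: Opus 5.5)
The plan is to work $p$-adically with the exact $p$-section $C(Y)=\sum_n u_{np}Y^n\in\Zp[[Y]]$ and its logarithmic derivative, and only reduce modulo $p$ at the end. The link to $G$ is $G(X)=\overline{C}(-X)$, which follows from the definition $G(X)=\sum(-1)^na_{np}X^n$. So it suffices to show
\[
 \frac{\theta C}{C}\equiv \Phi_p(-q(Y))-D_p(Y)\pmod p
\]
in $\Zp[[Y]]$, and then substitute $Y=-X$.

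\emph{Step 1 (expand the exact equation).} By \eqref{eq:P-decompose}, $P_p(U)=U^p-U+p\Phi_p(U)$. Since $\Phi_p\in\mathbf Z[Z]$ and $U=p\theta-q(Y)$ preserves $\Zp[[Y]]$, each of these three terms is an integral operator. Proposition~\ref{prop:conjugated} then gives
\[
 U^pC-UC+p\Phi_p(U)C=YC .
\]
Corollary~\ref{cor:Up} gives $U^pC\equiv -q^pC\pmod{p^2}$. Using $UC=p\theta C-qC$, this becomes
\[
 -q^pC-p\theta C+qC+p\Phi_p(U)C\equiv YC\pmod{p^2}.
\]
By \eqref{eq:Ddef}, $q^p-q+Y=pD_p$. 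Hence
\[
 p\,\theta C\equiv -pD_pC+p\,\Phi_p(U)C\pmod{p^2}.
\]

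\emph{Step 2 (divide by $p$ and reduce).} All terms lie in $\Zp[[Y]]$, which is $p$-torsion free, so we may divide by $p$ to obtain
\[
 \theta C\equiv -D_pC+\Phi_p(U)C\pmod p.
\]
Modulo $p$, the operator $U=p\theta-q$ acts on $\Zp[[Y]]$ as multiplication by $-q$. Since $\Phi_p$ is a polynomial with integer coefficients, $\Phi_p(U)C\equiv\Phi_p(-q)\,C\pmod p$. Finally $C(0)=u_0=1$ is a unit, so we may divide by $C$ to get $\theta C/C\equiv\Phi_p(-q)-D_p\pmod p$.

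\emph{Step 3 (substitute and simplify).} Put $Y=-X$; then $\theta=Y\,\dd/\dd Y=X\,\dd/\dd X$, so $\overline{\theta C/C}(-X)=XG'(X)/G(X)$. Because $p$ is odd, every exponent $p^i$ in $q$ is odd, hence $q(-X)=-q(X)$. This gives $\Phi_p(-q(-X))=\Phi_p(q(X))$. For the defect,
\[
 D_p(-X)=\frac{-q(X)^p+q(X)-X}{p}=-D_p(X).
\]
This yields the first equality in \eqref{eq:global-intermediate}. The second equality is a direct computation: $\Phi_p(q)+D_p=\bigl(P_p(q)-q^p+q+q^p-q+X\bigr)/p=\mathcal H_p$.

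The only delicate point is Step 2. The congruence modulo $p^2$ must be an honest statement about elements of $\Zp[[Y]]$, with every operator integral, so that dividing by $p$ is legitimate. It is exactly Corollary~\ref{cor:Up}, rather than the trivial congruence modulo $p$, that supplies the needed extra factor of $p$.
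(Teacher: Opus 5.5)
Your proof is correct and is essentially the paper's argument: you substitute $P_p(U)=U^p-U+p\Phi_p(U)$ into the exact conjugated equation, use Corollary~\ref{cor:Up} to get a congruence modulo $p^2$ carrying an explicit factor $p$, divide by $p$, replace $U$ by $-q$ modulo $p$, and pass to $X$ via $Y=-X$ using the oddness of $q$ and $D_p$. Your final check that $\Phi_p(q)+D_p=\mathcal H_p$ is the same computation the paper does.
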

\begin{proof}
Substitute \eqref{eq:P-decompose} into the exact equation \eqref{eq:exact-U}.  Since $UC=p\theta C-qC$, we obtain
\[
 p\theta C=U^pC+(q-Y)C+p\Phi_p(U)C.
\]
The series $C$ has coefficients in $\Zp$, so Corollary~\ref{cor:Up} may be applied to it.  Modulo $p^2$ this gives
\[
 \begin{aligned}
 p\theta C
 &\equiv-q^pC+(q-Y)C+p\Phi_p(U)C\\
 &=-pD_p(Y)C+p\Phi_p(U)C.
 \end{aligned}
\]
Divide by $p$ and reduce modulo $p$.  Since $U\equiv-q(Y)\pmod p$,
\begin{equation}\label{eq:C-log}
 \theta \overline C=\bigl(\varphi_p(-q(Y))-\overline{D_p}(Y)\bigr)\overline C
 \qquad\text{in }\Fp[[Y]].
\end{equation}

Reducing $C$ modulo $p$ and replacing $Y$ by $-X$ gives $\overline C(-X)=G(X)$.  Because $p$ is odd,
\[
 q(-X)=-q(X),\qquad D_p(-X)=-D_p(X).
\]
Hence \eqref{eq:C-log} becomes
\[
 X\frac{G'(X)}{G(X)}=\varphi_p(q(X))+\overline{D_p}(X)
 \qquad\text{in }\Fp[[X]].
\]
Finally, using $q(X^p)=q(X)-X$,
\[
 \begin{aligned}
 \Phi_p(q)+D_p
 &=\frac{P_p(q)-(q^p-q)}{p}
   +\frac{q^p-q+X}{p}\\
 &=\frac{P_p(q)+X}{p}=\mathcal H_p(X),
 \end{aligned}
\]
before reduction modulo $p$.  This proves the proposition.
\end{proof}

To pass from the logarithmic derivative to the weighted convolution we use the exact product identity recorded in \cite[Proposition~1]{AM}, where it is quoted from \cite[Proposition~2, Eq.~(4)]{AMLag}.\footnote{In \cite{AMLag}, the series denoted $T$ is our $q$ with the variable replaced by its $p$th power. Consequently their product identity becomes \eqref{eq:AM-global-product} after renaming that $p$th power as the variable.}  In our notation it is
\begin{equation}\label{eq:AM-global-product}
 G(X)G(-X)q(X)=X
 \qquad\text{in }\Fp[[X]].
\end{equation}

\begin{proof}[Proof of Theorem~\ref{thm:global}]
The logarithmic-derivative identity \eqref{eq:global-log} is Proposition~\ref{prop:global-log}.  Moreover,
\[
 [X^k]\bigl(XG'(X)G(-X)\bigr)
 =\sum_{r=0}^k(-1)^r r\,a_{rp}a_{(k-r)p}=W_k.
\]
Therefore, by \eqref{eq:AM-global-product},
\[
 \begin{aligned}
 \Wcal(X)
 &=XG'(X)G(-X)\\
 &=\left(X\frac{G'(X)}{G(X)}\right)G(X)G(-X)\\
 &=\overline{\mathcal H_p(X)}\frac{X}{q(X)},
 \end{aligned}
\]
which is \eqref{eq:global-W}.  Coefficient extraction gives \eqref{eq:all-k-coeff}.
\end{proof}

\begin{remark}\label{rem:two-defects}
The formula
\[
 \overline{\mathcal H_p(X)}=\varphi_p(q(X))+\overline{D_p}(X)
\]
separates two first-order defects.  The polynomial $\Phi_p$ measures the failure of the falling factorial $P_p(Z)$ to equal $Z^p-Z$ modulo $p^2$, while $D_p$ measures the failure of the integral series $q(X)$ to satisfy Frobenius exactly before reduction modulo $p$.  In the blockwise calculation these two corrections appeared at different-looking stages; globally they are the two summands of one identity.
\end{remark}

\section{The falling-factorial defect and Bernoulli numbers}\label{sec:defect}
The coefficients of $\Phi_p$ are classical. Glaisher's congruences for sums of products of $1,2,\ldots,p-1$ include the elementary-symmetric congruences underlying the result below; see \cite{Glaisher}. We include the short proof because it converts the global generating function into explicit Bernoulli formulas.

\begin{proposition}\label{prop:PhiBern}
In $\Fp[Z]$ one has
\begin{equation}\label{eq:PhiBern}
 \varphi_p(Z)=w_pZ-\sum_{n=1}^{p-2}\frac{B_n}{n}Z^{p-n}
           =w_pZ-\gamma_p(Z).
\end{equation}
Equivalently, if
\[
 \varphi_p(Z)=\sum_{d=1}^{p-1}\lambda_dZ^d,
\]
then the $\lambda_d$ are exactly those of \eqref{eq:lambdadef}.
\end{proposition}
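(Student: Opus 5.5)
We need to prove that $\varphi_p(Z)=\overline{(P_p(Z)-Z^p+Z)/p}$ has coefficients $w_p$ at $Z$, $-B_n/n$ at $Z^{p-n}$ for $1\le n\le p-2$, and $0$ at $Z^p$ and $Z^0$. We proceed by comparing coefficients.

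\textbf{Trivial coefficients.} Write $P_p(Z)=\sum_{j}(-1)^{p-j}e_{p-j}Z^j$, where $e_i$ is the $i$th elementary symmetric polynomial in $0,1,\dots,p-1$, equivalently in $1,\dots,p-1$.

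1. The $Z^p$ coefficient of $P_p$ is $1$, so $\Phi_p$ has no $Z^p$ term.
2. The constant term of $P_p$ is $0$, so $\Phi_p$ has no constant term.
3. The coefficient of $Z$ in $P_p$ is $(-1)^{p-1}(p-1)!=(p-1)!$, since $p$ is odd. Hence the $Z$ coefficient of $\Phi_p$ is $((p-1)!+1)/p=w_p$.

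\textbf{Middle coefficients.} For $2\le d\le p-1$, the coefficient of $Z^d$ in $\Phi_p$ is $(-1)^{p-d}e_{p-d}/p$. Put $n=p-d\in[1,p-2]$. It then suffices to prove Glaisher's congruence
\[
 e_n(1,\dots,p-1)\equiv(-1)^{n-1}\,p\,\frac{B_n}{n}\pmod{p^2}\qquad(1\le n\le p-2),
\]
since this gives $(-1)^n e_n/p\equiv -B_n/n$. The sign convention $B_1=-1/2$ gives $e_1=p(p-1)/2\equiv -p/2=p\,B_1$, which matches the case $n=1$.

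\textbf{Newton's identities.} To prove the congruence, pass from power sums to elementary symmetric functions. Let $S_k=\sum_{j=1}^{p-1}j^k$. Faulhaber's formula gives $S_k=\frac1{k+1}\sum_i\binom{k+1}{i}B_i p^{k+1-i}$, hence
\[
 S_k\equiv pB_k\pmod{p^2}\qquad(1\le k\le p-2).
\]
The terms with $i\le k-1$ carry $p^2$, and the relevant denominators are prime to $p$ because von Staudt–Clausen forbids a $p$ in the denominators of $B_i$ for $i<p-1$. Newton's identities read
\[
 n e_n=\sum_{k=1}^{n}(-1)^{k-1}e_{n-k}S_k.
\]
Argue by induction on $n$ that $e_m\equiv0\pmod p$ for $1\le m\le n-1$; this also follows directly from $P_p(Z)\equiv Z^p-Z$. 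Then every term with $1\le k\le n-1$ is a product of two multiples of $p$, hence $\equiv0\pmod{p^2}$. Only $k=n$ survives, with $e_0=1$:
\[
 n e_n\equiv(-1)^{n-1}S_n\equiv(-1)^{n-1}pB_n\pmod{p^2}.
\]
Since $n<p$, we may divide by $n$, which gives the claimed congruence.

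\textbf{Identification with $\lambda_d$.} Reindexing $d=p-n$, the $Z^d$ coefficient of $\varphi_p$ is $-B_{p-d}/(p-d)\equiv B_{p-d}/d$ in $\Fp$, because $p-d\equiv -d$. This matches $\lambda_d$ in \eqref{eq:lambdadef}, and the $d=1$ coefficient is $w_p=\lambda_1$.

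The main obstacle is the careful $p$-adic bookkeeping in the Faulhaber step. We must check that no Bernoulli denominator introduces a $p$ in the range $k\le p-2$, and treat $B_1$ separately because of its sign convention. The rest is mechanical.
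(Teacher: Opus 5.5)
Your proof is correct and follows essentially the same route as the paper. Both arguments use Faulhaber's congruence $S_n\equiv pB_n\pmod{p^2}$, the divisibility $p\mid e_j$ for $0<j<p-1$ coming from $P_p(Z)\equiv Z^p-Z$, and Newton's identities to isolate $ne_n\equiv(-1)^{n-1}pB_n\pmod{p^2}$, with the coefficient of $Z$ read off separately as the Wilson quotient; your extra bookkeeping (von Staudt--Clausen for the denominators in Faulhaber's formula, and the explicit check at $n=1$) only makes explicit what the paper leaves implicit.
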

\begin{proof}
Let
\[
 S_n=\sum_{a=1}^{p-1}a^n,
 \qquad
 e_n=e_n(1,2,\ldots,p-1).
\]
For $1\leq n\leq p-2$, Faulhaber's formula gives
\begin{equation}\label{eq:Faulhaber}
 S_n\equiv pB_n\pmod{p^2}.
\end{equation}
Moreover, $P_p(Z)\equiv Z^p-Z\pmod p$, so $p\mid e_j$ for $0<j<p-1$.  Newton's identities read
\[
 ne_n=\sum_{j=1}^n(-1)^{j-1}e_{n-j}S_j.
\]
Every term with $j<n$ is divisible by $p^2$, because both $e_{n-j}$ and $S_j$ are divisible by $p$.  Hence
\[
 ne_n\equiv(-1)^{n-1}pB_n\pmod{p^2}.
\]
The coefficient of $Z^{p-n}$ in $P_p(Z)$ is $(-1)^ne_n$, so
\[
 [Z^{p-n}]\varphi_p(Z)=-\frac{B_n}{n}\quad\text{in }\Fp.
\]
Finally, the coefficient of $Z$ in $P_p(Z)-(Z^p-Z)$ is $(p-1)!+1$, hence the coefficient of $Z$ in $\Phi_p$ is $w_p$.  Replacing $n$ by $p-d$ gives $\lambda_d=B_{p-d}/d$ for $2\leq d\leq p-1$.
\end{proof}

\section{Digital coefficient extraction in all degrees}\label{sec:digital}
We now extract the two factors in \eqref{eq:global-W} coefficientwise. The first factor is governed by Lucas' theorem, while the second is governed by the digit sum of the exponent.

\begin{lemma}[Small powers of $q$]\label{lem:q-small}
Let $1\le r\le p-1$. For every $n\ge0$,
\begin{equation}\label{eq:q-small-coeff}
 [X^n]q(X)^r=
 \begin{cases}
 r!/f_p(n),&s_p(n)=r,\\
 0,&s_p(n)\ne r,
 \end{cases}
 \qquad\text{in }\Fp.
\end{equation}
\end{lemma}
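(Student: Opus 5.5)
We expand $q(X)^r$ by the multinomial theorem and then use the fact that base-$p$ representations are unique. We write $q(X)=\sum_{i\ge0}X^{p^i}$ and think of $q(X)^r$ as $r$ ordered choices of exponents $p^{i_1},\dots,p^{i_r}$. Grouping the choices by multiplicities $k_i=\#\{j:i_j=i\}$ gives, already in $\mathbf Z[[X]]$,
\[
 q(X)^r=\sum_{\substack{(k_i)_{i\ge0},\ k_i\ge0\\ \sum_i k_i=r}}\frac{r!}{\prod_i k_i!}\,X^{\sum_i k_ip^i}.
\]
The sums are finite because only finitely many $k_i$ are nonzero. Each admissible tuple contributes a monomial of degree $n=\sum_i k_ip^i$.

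The key step is uniqueness. Since $\sum_i k_i=r\le p-1$, every $k_i$ satisfies $0\le k_i\le p-1$, so $\sum_i k_ip^i$ is already a base-$p$ expansion. Uniqueness of base-$p$ expansions then forces $k_i=d_i(n)$. Two consequences follow:
\begin{itemize}
\item At most one tuple contributes to $[X^n]q(X)^r$.
\item A contributing tuple exists exactly when the digits of $n$ satisfy $\sum_i d_i(n)=r$, that is, when $s_p(n)=r$.
\end{itemize}
When such a tuple exists, the coefficient is the integer $r!/\prod_i d_i(n)!=r!/f_p(n)$. When $s_p(n)\ne r$, the coefficient is $0$.

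Finally we reduce modulo $p$. All digits satisfy $d_i(n)<p$, so $f_p(n)$ is prime to $p$ and is a unit in $\Fp$. The integer $r!/f_p(n)$ therefore reduces to $r!\cdot f_p(n)^{-1}$ in $\Fp$, which is the claimed formula.

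There is no real obstacle. The one point that needs care is the bound $r\le p-1$: it is what prevents carries in $\sum_i k_ip^i$, and with $r\ge p$ several tuples could give the same degree $n$. It is also worth noting that $r!$ is nonzero modulo $p$, although this is not needed for the statement itself.
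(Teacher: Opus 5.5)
Your proposal is correct and follows essentially the same route as the paper: expand $q(X)^r$ as ordered choices of powers of $p$, and use $r<p$ to rule out carrying, so the multiplicities are exactly the base-$p$ digits of $n$. This forces $s_p(n)=r$ and gives the count $r!/f_p(n)$, which is a unit multiple in $\Fp$.
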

\begin{proof}
Expanding $q(X)^r$ amounts to choosing an ordered $r$-tuple of powers of $p$. If the power $p^i$ occurs $c_i$ times, then $\sum_i c_i=r<p$ and the corresponding exponent is $\sum_i c_ip^i$. Since every $c_i<p$, no carrying occurs, so the $c_i$ are exactly the base-$p$ digits of that exponent. Thus a contribution to $X^n$ exists precisely when $s_p(n)=r$, and in that case the number of ordered choices is $r!/\prod_i d_i(n)!=r!/f_p(n)$.
\end{proof}

\begin{lemma}[Digits of the Frobenius defect]\label{lem:D-digits}
For every $n\ge0$,
\begin{equation}\label{eq:D-digit-coeff}
 [X^n]D_p(X)=
 \begin{cases}
 -1/f_p(n),&s_p(n)=p,\\
 0,&s_p(n)\ne p,
 \end{cases}
 \qquad\text{in }\Fp.
\end{equation}
\end{lemma}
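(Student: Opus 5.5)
We compute $[X^n]D_p(X)$ from the first expression in \eqref{eq:Ddef}, $D_p=(q(X)^p-q(X^p))/p$, by working over $\mathbf Z$ and reducing modulo $p$ only at the end.

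First expand $q(X)^p$ integrally. As in the proof of Lemma~\ref{lem:q-small}, a term of $q(X)^p$ comes from choosing an ordered $p$-tuple of powers of $p$. If $p^i$ is chosen $c_i$ times, then $\sum_ic_i=p$, the exponent is $\sum_ic_ip^i$, and the integer coefficient is the multinomial $p!/\prod_ic_i!$. There are two kinds of tuples.

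\emph{Degenerate tuples.} Here some $c_i=p$ and all other $c_j=0$. These give the terms $X^{p^{i+1}}$ with coefficient $1$, and they cancel exactly against $q(X^p)=\sum_iX^{p^{i+1}}$.

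\emph{Nondegenerate tuples.} Here every $c_i\le p-1$. No carrying occurs in $\sum_ic_ip^i$, so the $c_i$ are exactly the base-$p$ digits of the exponent $n$, and $s_p(n)=p$. Conversely, each $n$ with $s_p(n)=p$ arises from exactly one such digit vector. This $n$ is never a pure power of $p$, because a pure power has digit sum $1$, so it does not overlap with the degenerate terms. The multinomial $p!/f_p(n)$ is then divisible by $p$ exactly once, since $f_p(n)$ is prime to $p$.

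Putting these together, as integers
\[
 [X^n]D_p(X)=\begin{cases}(p-1)!/f_p(n),&s_p(n)=p,\\ 0,&\text{otherwise}.\end{cases}
\]
By Wilson's theorem, $(p-1)!\equiv-1\pmod p$, which gives \eqref{eq:D-digit-coeff}. We also note that $f_p(n)\in\Fp^\times$ because every digit is less than $p$.

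The only delicate step is the bookkeeping just described: checking that the degenerate tuples are exactly those cancelled by $q(X^p)$, and that no other tuple produces a power of $p$ or a repeated exponent. Both facts follow from the no-carry observation, since every $c_i<p$ in the nondegenerate case.
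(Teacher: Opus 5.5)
Your proposal is correct and follows the paper's proof essentially step for step. Like the paper, you expand $q(X)^p$ by multinomials, cancel the one-part compositions $c_i=p$ against $q(X^p)$, use the absence of carrying to identify the remaining compositions with the integers of digit sum $p$, and finish with Wilson's theorem.
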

\begin{proof}
Expand
\[
 q(X)^p=\sum_{c_0+c_1+\cdots=p}\frac{p!}{\prod_i c_i!}X^{\sum_i c_ip^i}.
\]
The terms with one $c_i=p$ and all other $c_j=0$ are exactly the pure powers occurring in $q(X^p)$ and therefore cancel in $q(X)^p-q(X^p)$. In every remaining composition one has $0\le c_i\le p-1$, so again there is no carrying and the $c_i$ are precisely the base-$p$ digits of the exponent. These exponents are therefore exactly the integers of digit sum $p$, each arising from a unique composition. After division by $p$, its coefficient is
\[
 \frac{(p-1)!}{\prod_i c_i!}\equiv-\frac1{\prod_i c_i!}\pmod p
\]
by Wilson's theorem.
\end{proof}

\begin{proposition}[Digital logarithmic derivative]\label{prop:digital-log}
If
\[
 \mathcal R(X):=X\frac{G'(X)}{G(X)}=\sum_{n\ge0}\rho_nX^n,
\]
then $\rho_n=\rho_p(n)$ for every $n\ge0$.
\end{proposition}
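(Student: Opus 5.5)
The plan is to read off the coefficients of $\mathcal R(X)$ from the decomposition in Proposition~\ref{prop:global-log}. That proposition gives
\[
 \mathcal R(X)=\varphi_p(q(X))+\overline{D_p}(X),
\]
and Proposition~\ref{prop:PhiBern} gives $\varphi_p(Z)=\sum_{d=1}^{p-1}\lambda_dZ^d$ with the $\lambda_d$ of \eqref{eq:lambdadef}. Therefore
\[
 \rho_n=\sum_{d=1}^{p-1}\lambda_d\,[X^n]q(X)^d+[X^n]\overline{D_p}(X).
\]
The first sum is handled by Lemma~\ref{lem:q-small} and the second term by Lemma~\ref{lem:D-digits}.

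For $n>0$ I would split into three cases according to $s=s_p(n)$.

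\begin{itemize}
\item If $1\leq s\leq p-1$: Lemma~\ref{lem:q-small} shows that only the summand $d=s$ survives, and it contributes $\lambda_s\,s!/f_p(n)$. Lemma~\ref{lem:D-digits} gives no $D_p$ contribution because $s\neq p$. The total matches the first line of \eqref{eq:rho-def}.
\item If $s=p$: every summand $[X^n]q^d$ with $d\leq p-1$ vanishes, and Lemma~\ref{lem:D-digits} contributes $-1/f_p(n)$.
\item If $s>p$: both sources vanish.
\end{itemize}

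For $n=0$, every $q^d$ with $d\geq1$ has zero constant term, $\varphi_p$ has no constant term, and $D_p(0)=0$. Hence $\rho_0=0=\rho_p(0)$.

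The only point needing care is that $\varphi_p$ has degree at most $p-1$, so that Lemma~\ref{lem:q-small} applies to every power of $q$ that occurs. This follows from \eqref{eq:PhiDef}: $P_p(Z)$ and $Z^p-Z$ are both monic of degree $p$, so their difference has degree at most $p-1$. Also, $\varphi_p$ has zero constant term since $P_p(0)=0$.

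With that degree bound in place, the argument is pure bookkeeping, and I do not expect a genuine obstacle.
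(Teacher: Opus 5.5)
Your proposal is correct and matches the paper's proof: you expand $\mathcal R=\sum_{r=1}^{p-1}\lambda_r q^r+\overline{D_p}$ using Propositions~\ref{prop:global-log} and~\ref{prop:PhiBern}, then read off coefficients with Lemma~\ref{lem:q-small} and Lemma~\ref{lem:D-digits}, case by case in $s_p(n)$. Your explicit checks that $\varphi_p$ has degree at most $p-1$ with no constant term, and of the case $n=0$, are details the paper leaves implicit.
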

\begin{proof}
By Proposition~\ref{prop:global-log} and Proposition~\ref{prop:PhiBern},
\[
 \mathcal R(X)=\sum_{r=1}^{p-1}\lambda_rq(X)^r+\overline{D_p}(X).
\]
For a fixed $n$, Lemma~\ref{lem:q-small} shows that at most one summand in the first term contributes, namely $r=s_p(n)$ when $1\le s_p(n)\le p-1$. Lemma~\ref{lem:D-digits} supplies exactly the case $s_p(n)=p$. No term contributes when $s_p(n)>p$.
\end{proof}

It remains to extract $X/q(X)$. Put
\begin{equation}\label{eq:e-def}
 e_i=\frac{p^i-1}{p-1}=1+p+\cdots+p^{i-1}\quad(i\ge1),\qquad e_0=0,
\end{equation}
and
\begin{equation}\label{eq:S-def}
 S(Z)=\sum_{i\ge0}Z^{e_i}\in\Fp[[Z]].
\end{equation}
Then
\begin{equation}\label{eq:q-S}
 \frac{q(X)}X=S(X^{p-1}).
\end{equation}

\begin{lemma}[The reciprocal factor]\label{lem:reciprocal}
In $\Fp[[Z]]$ one has
\begin{equation}\label{eq:S-recip}
 \frac1{S(Z)}=\sum_{m\ge0}\delta_p(m)Z^m,
\end{equation}
where $\delta_p$ is defined by \eqref{eq:delta-def}. Equivalently, formula \eqref{eq:delta-lucas} holds.
\end{lemma}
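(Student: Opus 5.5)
The plan is to show that $S$ satisfies an algebraic equation over $\Fp$, then apply Lagrange inversion over $\mathbf Z$ and reduce modulo $p$. Since $e_{i+1}=1+pe_i$ for all $i\ge0$ (including $e_1=1=1+p\cdot e_0$), we have
\[
 S(Z)=1+\sum_{i\ge0}Z^{1+pe_i}=1+Z\,S(Z^p).
\]
In $\Fp[[Z]]$ we have $S(Z^p)=S(Z)^p$, so $S=1+ZS^p$. Writing $S=1+W$ gives $W=Z(1+W)^p$ with $W\in Z\,\Fp[[Z]]$.

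Next, consider the same equation $\widetilde W=Z(1+\widetilde W)^p$ over $\mathbf Z[[Z]]$. It has a unique solution with $\widetilde W(0)=0$, and that solution has integer coefficients, since the equation determines each coefficient recursively from lower ones by integer operations. For the same recursive reason the equation has a unique solution in $Z\,\Fp[[Z]]$. Hence $\overline{\widetilde W}=W$ and $\overline{(1+\widetilde W)^{-1}}=1/S$.

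Over $\mathbf Q$, Lagrange inversion with $\phi(w)=(1+w)^p$ and $H(w)=(1+w)^{-1}$ gives, for $m\ge1$,
\[
 [Z^m](1+\widetilde W)^{-1}=\frac1m[w^{m-1}]\bigl(-(1+w)^{-2}\bigr)(1+w)^{pm}
 =-\frac1m\binom{pm-2}{m-1}.
\]
This rational number is an integer, being a coefficient of a series in $\mathbf Z[[Z]]$. The main point needing care is rewriting it so that its reduction is visible, because the factor $1/m$ is not a $p$-adic unit when $p\mid m$. The ratio $\binom{pm-1}{m-1}/\binom{pm-2}{m-1}=(pm-1)/(pm-m)$ yields
\[
 -\frac1m\binom{pm-2}{m-1}=-\binom{pm-1}{m-1}\frac{p-1}{pm-1}.
\]
Here $pm-1$ is a $p$-adic unit and $(p-1)/(pm-1)\equiv(-1)/(-1)=1\pmod p$. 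So the coefficient is congruent to $-\binom{pm-1}{m-1}=\delta_p(m)$ modulo $p$. The constant term is $1=\delta_p(0)$, which proves \eqref{eq:S-recip}.

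For \eqref{eq:delta-lucas}, note that $pm-1=(p-1)+p(m-1)$. Thus the base-$p$ digits of $pm-1$ are $p-1,\alpha_0,\alpha_1,\ldots$, while those of $m-1$ are $\alpha_0,\alpha_1,\ldots$. Lucas' theorem then gives
\[
 \binom{pm-1}{m-1}\equiv\binom{p-1}{\alpha_0}\binom{\alpha_0}{\alpha_1}\binom{\alpha_1}{\alpha_2}\cdots\pmod p.
\]
This product vanishes unless $\alpha_0\ge\alpha_1\ge\alpha_2\ge\cdots$, which gives the final claim.
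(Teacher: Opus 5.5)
Your proof is correct and follows essentially the same route as the paper: the functional equation $S=1+ZS^p$ in characteristic $p$, a lift to $\mathbf Z$ solved by Lagrange inversion, reduction modulo $p$ via the identity relating $\frac1m\binom{pm-2}{m-1}$ to $\binom{pm-1}{m-1}$, and Lucas' theorem. The differences are only cosmetic. You invert $W=S-1$ via Lagrange--B\"urmann with $H(w)=(1+w)^{-1}$ and get uniqueness by coefficient recursion, whereas the paper inverts $\mathcal U=1-1/S$ directly (over $\mathbf Z$ this is the same series, $\mathcal U=\widetilde W/(1+\widetilde W)$) and gets uniqueness from a nonvanishing derivative.
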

\begin{proof}
The exponents satisfy $e_{i+1}=1+pe_i$. Hence, in characteristic $p$,
\begin{equation}\label{eq:S-functional}
 S(Z)=1+ZS(Z)^p.
\end{equation}
Write $\Delta(Z)=S(Z)^{-1}$. Then
\begin{equation}\label{eq:Delta-AS}
 \Delta(Z)^p-\Delta(Z)^{p-1}+Z=0.
\end{equation}
To determine its coefficients without dividing by integers in $\Fp$, work first over $\mathbf Z$. Let $\mathcal U(Z)\in\mathbf Z[[Z]]$ be the unique solution with $\mathcal U(0)=0$ of
\begin{equation}\label{eq:U-functional}
 \mathcal U=Z(1-\mathcal U)^{1-p}.
\end{equation}
Then $1-\mathcal U$ satisfies \eqref{eq:Delta-AS} over $\mathbf Z$, and its reduction modulo $p$ has constant term $1$. Equation~\eqref{eq:Delta-AS} has a unique solution in $1+Z\Fp[[Z]]$: its derivative with respect to $\Delta$ at $(\Delta,Z)=(1,0)$ is $1$. Hence
\[
 \Delta(Z)=1-\mathcal U(Z).
\]
Lagrange inversion gives, for $m\ge1$,
\begin{equation}\label{eq:lagrange-Am}
 [Z^m]\mathcal U(Z)=\frac1m\binom{pm-2}{m-1}=:A_m\in\mathbf Z.
\end{equation}
The identity
\[
 (pm-1)A_m=(p-1)\binom{pm-1}{m-1}
\]
shows, after reduction modulo $p$, that
\[
 A_m\equiv\binom{pm-1}{m-1}\pmod p.
\]
Consequently
\[
 [Z^m]\Delta(Z)=-\binom{pm-1}{m-1}=\delta_p(m),
\]
which proves \eqref{eq:S-recip}.

Finally, if $m-1=\sum_i \alpha_ip^i$, then
\[
 pm-1=(p-1)+\alpha_0p+\alpha_1p^2+\cdots.
\]
Lucas' theorem therefore gives
\[
 \binom{pm-1}{m-1}
 \equiv\binom{p-1}{\alpha_0}\binom{\alpha_0}{\alpha_1}\binom{\alpha_1}{\alpha_2}\cdots\pmod p,
\]
which is \eqref{eq:delta-lucas}. The product vanishes exactly when some $\alpha_{i+1}>\alpha_i$.
\end{proof}

\begin{proof}[Proof of Theorem~\ref{thm:digital}]
Proposition~\ref{prop:digital-log} gives
\[
 \overline{\mathcal H_p(X)}=\mathcal R(X)=\sum_{n\ge0}\rho_p(n)X^n.
\]
By \eqref{eq:q-S} and Lemma~\ref{lem:reciprocal},
\[
 \frac{X}{q(X)}=\frac1{S(X^{p-1})}=\sum_{m\ge0}\delta_p(m)X^{m(p-1)}.
\]
Multiplication in Theorem~\ref{thm:global} and coefficient extraction give \eqref{eq:digital-W}.
\end{proof}

\section{\texorpdfstring{Explicit weighted convolutions below $p^2-1$}{Explicit weighted convolutions below p squared minus 1}}\label{sec:explicit}
We now extract Theorem~\ref{thm:explicit-p2} from the global formula. The harmonic-number correction comes entirely from the Frobenius defect $D_p$.

\begin{proof}[Proof of Theorem~\ref{thm:explicit-p2}]
Put $z=X^{p-1}$. For coefficients of degree below $p^2-1$ we may use
\begin{equation}\label{eq:q-low}
 q(X)=X(1+z)+O(X^{p^2}),\qquad \frac{X}{q(X)}=\frac1{1+z}+O(X^{p^2-1}).
\end{equation}
By Proposition~\ref{prop:global-log} and Theorem~\ref{thm:global},
\begin{equation}\label{eq:split-W}
 \Wcal(X)=\frac{X}{q(X)}\varphi_p(q(X))+\frac{X}{q(X)}\overline{D_p}(X).
\end{equation}
Write $\varphi_p(Z)=\sum_{d=1}^{p-1}\lambda_dZ^d$. The first term of \eqref{eq:split-W}, using \eqref{eq:q-low}, is
\begin{equation}\label{eq:Phi-contrib}
 \frac{X}{q}\varphi_p(q)\equiv\sum_{d=1}^{p-1}\lambda_dX^d(1+z)^{d-1}\pmod{X^{p^2-1}}.
\end{equation}
Thus its coefficient at $X^{d+j(p-1)}$ is
\[
 \binom{d-1}{j}\lambda_d.
\]

For the second term, recall $D_p=(q^p-q(X^p))/p$. Up to degrees which can affect \eqref{eq:split-W} below $p^2-1$, the pure terms $X^p$ and $X^{p^2}$ cancel, leaving
\[
 D_p(X)\equiv X^p\sum_{r=1}^{p-1}\frac1p\binom pr z^r\pmod{X^{p^2}}.
\]
Since
\[
 \frac1p\binom pr=\frac1r\binom{p-1}{r-1}\equiv\frac{(-1)^{r-1}}r\pmod p,
\]
we obtain
\begin{equation}\label{eq:D-contrib}
 \frac{X}{q}D_p(X)\equiv X\frac{z}{1+z}\sum_{r=1}^{p-1}\frac{(-1)^{r-1}}r z^r\pmod{(p,X^{p^2-1})}.
\end{equation}
For $2\le j\le p-1$, the coefficient of $z^j$ in the factor following $X$ is
\[
 \sum_{r=1}^{j-1}\frac{(-1)^{r-1}}r(-1)^{j-1-r}
 =(-1)^{j-2}\sum_{r=1}^{j-1}\frac1r
 =(-1)^{j-2}\mathfrak h_{j-1}^{(1)}.
\]
The coefficient at $j=p$ is $(-1)^{p-2}\mathfrak h_{p-1}^{(1)}=0$ in $\Fp$. There are no contributions for $j=0,1$. Thus the Frobenius-defect term is supported, in the stated range, exactly on the ray $d=1$ and contributes $c_j$. Combining this with \eqref{eq:Phi-contrib} proves \eqref{eq:explicit-p2}.
\end{proof}

\begin{proof}[Proof of Corollary~\ref{cor:first-second}]
For $k=p$, one has $(d,j)=(1,1)$ in Theorem~\ref{thm:explicit-p2}, giving $W_p=0$. If $p<k<2p-1$, put $d=k-p+1$ and $j=1$. Then
\[
 W_k=(d-1)\frac{B_{p-d}}d
 =\frac{k-p}{k-p+1}B_{2p-1-k}.
\]
Finally, $2p-1=1+2(p-1)$, so $d=1$, $j=2$, and $W_{2p-1}=c_2=H_1^{(1)}=1$.
\end{proof}

\begin{proof}[Proof of Corollary~\ref{cor:third-block}]
Assume first that $p\ge5$, so $3p-1<p^2-1$. For $2p\le k\le3p-3$, write
\[
 d=k-2p+2,\qquad j=2.
\]
Then $2\le d\le p-1$, and Theorem~\ref{thm:explicit-p2} gives
\[
 W_k=\binom{d-1}{2}\frac{B_{p-d}}d
 =\frac{(k-2p)(k-2p+1)}{2(k-2p+2)}B_{3p-2-k}.
\]
At $k=3p-2$ we have $(d,j)=(1,3)$, so
\[
 W_{3p-2}=c_3=-H_2^{(1)}=-\frac32.
\]
At $k=3p-1$ we have $(d,j)=(2,3)$, and the binomial coefficient vanishes. For $p=3$, direct coefficient extraction from \eqref{eq:global-W} gives $W_6=W_7=W_8=0$, which is the reduction of the same displayed formulas.
\end{proof}

\section{Automaticity and finite-state evaluation}\label{sec:automatic}
We now show that the digital formula has a finite-state interpretation. Recall that a sequence with values in a finite set is called $p$-automatic if a deterministic finite automaton with output can recover its $n$th term from the base-$p$ expansion of $n$. Christol's theorem \cite{Christol} states that a sequence $(c_n)_{n\ge0}$ with values in $\Fp$ is $p$-automatic if and only if its generating series $\sum_{n\ge0}c_nX^n$ is algebraic over $\Fp(X)$.

Recall
\begin{equation}\label{eq:phi-def}
 \varphi_p(Z)=\overline{\Phi_p(Z)}=\sum_{r=1}^{p-1}\lambda_rZ^r\in\Fp[Z].
\end{equation}
By Lemma~\ref{lem:D-digits}, the reduction of the Frobenius defect modulo $p$ is
\begin{equation}\label{eq:Dbar-digits}
 \overline{D_p}(X)=-\sum_{s_p(n)=p}\frac{X^n}{f_p(n)}.
\end{equation}
The series $\overline{D_p}$ satisfies a simple Artin--Schreier equation over the field generated by $q$.

\begin{proposition}[Artin--Schreier equation for the Frobenius defect]\label{prop:Dbar-AS}
Let
\[
 \Omega_p(U,V)=\frac{U^p+V^p-(U+V)^p}{p}\in\mathbf Z[U,V]
\]
be the standard first $p$-typical Witt addition correction polynomial, and define
\begin{equation}\label{eq:Psi-def}
 \Psi_p(X,Z)=\overline{\Omega_p(X,Z-X)}
 =\sum_{d=1}^{p-1}\frac{X^d(Z-X)^{p-d}}{d!(p-d)!}\in\Fp[X,Z].
\end{equation}
Equivalently,
\[
 \Psi_p(X,Z)=-\overline{\frac{Z^p-X^p-(Z-X)^p}{p}}.
\]
Then
\begin{equation}\label{eq:Dbar-AS}
 \overline{D_p}(X)^p-\overline{D_p}(X)=\Psi_p(X,q(X)).
\end{equation}
\end{proposition}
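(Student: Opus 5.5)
The plan is to verify \eqref{eq:Dbar-AS} in characteristic zero before reducing, using the two facts already in play. The first is the Frobenius congruence for integral series: $f(X)^p\equiv f(X^p)\pmod p$ for every $f\in\mathbf Z[[X]]$. The second is the functional equation $q(X^p)=q(X)-X$, which holds exactly in $\mathbf Z[[X]]$ (not merely modulo $p$).

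First, apply the Frobenius congruence to $f=D_p\in\mathbf Z[[X]]$, which is integral by \eqref{eq:Ddef}. This gives
\[
\overline{D_p}(X)^p=\overline{D_p(X^p)}.
\]
The left side of \eqref{eq:Dbar-AS} is therefore the reduction of the integral series $D_p(X^p)-D_p(X)$, and it remains to compute this difference exactly.

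Second, substitute $X\mapsto X^p$ in the definition $D_p=(q^p-q(X^p))/p$, and iterate the functional equation to get $q(X^{p^2})=q(X^p)-X^p=q-X-X^p$. Writing $q=q(X)$, this yields
\[
D_p(X^p)=\frac{(q-X)^p-(q-X)+X^p}{p},\qquad D_p(X)=\frac{q^p-q+X}{p}.
\]
Subtracting, the linear terms $\pm(q-X)$ cancel, leaving
\[
D_p(X^p)-D_p(X)=\frac{X^p+(q-X)^p-q^p}{p}=\Omega_p\bigl(X,q(X)-X\bigr),
\]
since $U+V=q$ when $U=X$ and $V=q-X$. Reducing modulo $p$ gives $\Psi_p(X,q(X))$, provided we check that reduction commutes with substituting the integral series $q$ into the integer polynomial $\Omega_p$. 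It does, because $\Omega_p$ has integer coefficients.

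Finally, confirm the two stated expressions for $\Psi_p$. Expanding binomially,
\[
\Omega_p(U,V)=-\sum_{d=1}^{p-1}\frac1p\binom pd U^dV^{p-d},\qquad \frac1p\binom pd=\frac{(p-1)!}{d!(p-d)!}\equiv-\frac1{d!(p-d)!}\pmod p
\]
by Wilson's theorem. This gives the displayed sum in \eqref{eq:Psi-def}. The alternative form $-\overline{(Z^p-X^p-(Z-X)^p)/p}$ is just the definition of $\Omega_p(X,Z-X)$ rewritten. There is no real obstacle here. The only point needing care is to use the exact identity $q(X^{p^2})=q-X-X^p$ over $\mathbf Z$, so that the division by $p$ is performed before any reduction.
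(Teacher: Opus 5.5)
Your proof is correct. It reaches the key intermediate identity by a different route from the paper.

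The paper starts from the digit description of $\overline{D_p}$ (Lemma~\ref{lem:D-digits}). It splits the sum over $s_p(n)=p$ according to the least significant digit $d_0(n)$, and uses Lemma~\ref{lem:q-small} to resum the part with $d_0(n)=d\geq1$ as $X^dq(X^p)^{p-d}/(d!(p-d)!)$. This yields $\overline{D_p}(X)=\overline{D_p}(X^p)-\overline{\Omega_p}\bigl(X,q(X^p)\bigr)$ in $\Fp[[X]]$. Frobenius and $q(X^p)=q(X)-X$ then finish exactly as in your last step.

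You obtain the same relation before reduction, as the exact identity $D_p(X)=D_p(X^p)-\Omega_p\bigl(X,q(X^p)\bigr)$ in $\mathbf Z[[X]]$. You get it directly from the definition of $D_p$ and the exact functional equation of $q$, with no digit combinatorics.

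What each route buys:
\begin{itemize}
\item Your version is shorter and self-contained, needing neither Lemma~\ref{lem:q-small} nor Lemma~\ref{lem:D-digits}.
\item It makes the Witt remark after the proposition literal: already in characteristic zero, the Frobenius defect changes under $X\mapsto X^p$ by exactly the Witt addition correction for the splitting $q(X)=X+q(X^p)$.
\item The paper's version ties the Artin--Schreier equation to the base-$p$ digit structure of $\overline{D_p}$ that underlies Theorem~\ref{thm:digital} and the automaton of Theorem~\ref{thm:weighted-automaton}.
\item It also reuses lemmas the paper needs anyway.
\end{itemize}
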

\begin{proof}
Partition the sum \eqref{eq:Dbar-digits} according to the least significant base-$p$ digit $d=d_0(n)$. If $d=0$, write $n=pn'$. The condition $s_p(n)=p$ becomes $s_p(n')=p$, and this part of \eqref{eq:Dbar-digits} is $\overline{D_p}(X^p)$.

If $1\le d\le p-1$, write $n=d+pn'$. Then $s_p(n')=p-d<p$, and Lemma~\ref{lem:q-small} gives
\[
 \sum_{s_p(n')=p-d}\frac{X^{pn'}}{f_p(n')}
 =\frac{q(X^p)^{p-d}}{(p-d)!}.
\]
Consequently
\begin{equation}\label{eq:Dbar-AS-step}
 \overline{D_p}(X)=\overline{D_p}(X^p)-\sum_{d=1}^{p-1}
 \frac{X^dq(X^p)^{p-d}}{d!(p-d)!}.
\end{equation}
In characteristic $p$, $\overline{D_p}(X^p)=\overline{D_p}(X)^p$ and $q(X^p)=q(X)-X$. Rearranging \eqref{eq:Dbar-AS-step} gives \eqref{eq:Dbar-AS}.
\end{proof}

\begin{remark}[Witt interpretation]
The closed form
\[
 \Psi_p(X,Z)=\overline{\Omega_p(X,Z-X)}
 =-\overline{\frac{Z^p-X^p-(Z-X)^p}{p}}
\]
identifies the Frobenius correction with the first $p$-typical Witt addition correction for the decomposition $Z=X+(Z-X)$. Thus the Artin--Schreier equation above is naturally Witt-theoretic rather than an accidental coefficient calculation; this also foreshadows the ghost-coordinate reconstruction in Section~\ref{sec:recovery}.
\end{remark}

\begin{corollary}[Algebraicity]\label{cor:R-AS}
Put
\begin{equation}\label{eq:Theta-def}
 \Theta_p(X,Z)=\Psi_p(X,Z)+\varphi_p(Z-X)-\varphi_p(Z).
\end{equation}
Then
\begin{equation}\label{eq:R-AS}
 \mathcal R(X)^p-\mathcal R(X)=\Theta_p(X,q(X)),\qquad \mathcal R(X)=X\frac{G'(X)}{G(X)},
\end{equation}
and
\begin{equation}\label{eq:W-algebraic}
 (q(X)-X)\Wcal(X)^p-q(X)X^{p-1}\Wcal(X)=X^p\Theta_p(X,q(X)).
\end{equation}
In particular, $\Wcal$ is algebraic over $\Fp(X)$ of degree at most $p^2$.
\end{corollary}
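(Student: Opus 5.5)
We compute $\mathcal R^p-\mathcal R$ directly from the decomposition already established, and then transport the result to $\Wcal$ through the relation $\Wcal=(X/q)\mathcal R$ from Theorem~\ref{thm:global}.

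\emph{Step 1: the Artin--Schreier equation for $\mathcal R$.} By Proposition~\ref{prop:global-log},
\[
 \mathcal R(X)=\varphi_p(q(X))+\overline{D_p}(X).
\]
In characteristic $p$ we have $\varphi_p(Z)^p=\varphi_p^{(\sigma)}(Z^p)$. Since $\varphi_p$ has coefficients in $\Fp$, Frobenius fixes them, so
\[
 \varphi_p(q)^p=\varphi_p(q^p)=\varphi_p(q(X^p))=\varphi_p(q-X).
\]
Hence $\varphi_p(q)^p-\varphi_p(q)=\varphi_p(q-X)-\varphi_p(q)$. Adding Proposition~\ref{prop:Dbar-AS} for the $\overline{D_p}$ summand, and using additivity of $x\mapsto x^p-x$, gives $\mathcal R^p-\mathcal R=\Theta_p(X,q)$. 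This is \eqref{eq:R-AS}.

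\emph{Step 2: the equation for $\Wcal$.} Theorem~\ref{thm:global} gives $\mathcal R=(q/X)\Wcal$, so
\[
 \mathcal R^p=\frac{q^p}{X^p}\,\Wcal^p=\frac{q-X}{X^p}\,\Wcal^p .
\]
Substituting into \eqref{eq:R-AS} and multiplying by $X^p$ yields
\[
 (q-X)\Wcal^p-qX^{p-1}\Wcal=X^p\Theta_p(X,q),
\]
which is \eqref{eq:W-algebraic}.

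\emph{Step 3: the degree bound.} First, $q$ is algebraic of degree at most $p$ over $\Fp(X)$, being a root of $T^p-T+X$. Second, by \eqref{eq:R-AS}, $\mathcal R$ is a root of the degree-$p$ polynomial $T^p-T-\Theta_p(X,q)$ over $\Fp(X,q)$. Therefore $\mathcal R$ has degree at most $p^2$ over $\Fp(X)$. Finally, $\Wcal=(X/q)\mathcal R$ lies in $\Fp(X,q,\mathcal R)$, whose degree over $\Fp(X)$ is at most $p\cdot p=p^2$. Consequently $[\Fp(X,\Wcal):\Fp(X)]\le p^2$.

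\emph{Main obstacle.} The only delicate point is Step 1, namely justifying $\varphi_p(q)^p=\varphi_p(q-X)$; this rests on $\varphi_p$ having $\Fp$-coefficients together with $q^p=q-X$. Everything else is direct substitution of results already proved.
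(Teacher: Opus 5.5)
Your proposal is correct and follows the paper's proof essentially step for step. It uses the same Artin--Schreier computation for $\mathcal R=\varphi_p(q)+\overline{D_p}$, via Proposition~\ref{prop:Dbar-AS} and $\varphi_p(q)^p=\varphi_p(q-X)$; the same substitution $\mathcal R=(q/X)\Wcal$ with $q^p=q-X$; and the same tower $\Fp(X)\subset\Fp(X,q)\subset\Fp(X,q,\mathcal R)$ to obtain the degree bound $p^2$.
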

\begin{proof}
Proposition~\ref{prop:global-log} and \eqref{eq:phi-def} give
\[
 \mathcal R(X)=\varphi_p(q(X))+\overline{D_p}(X).
\]
Using Proposition~\ref{prop:Dbar-AS} and $q^p=q-X$ yields
\[
 \mathcal R^p-\mathcal R=\Psi_p(X,q)+\varphi_p(q)^p-\varphi_p(q)
      =\Psi_p(X,q)+\varphi_p(q-X)-\varphi_p(q),
\]
which is \eqref{eq:R-AS}. Since $\Wcal=X\mathcal R/q$, substitution in \eqref{eq:R-AS} gives \eqref{eq:W-algebraic}. Finally $q$ satisfies $q^p-q+X=0$, so $[\Fp(X,q):\Fp(X)]\le p$. Equation~\eqref{eq:R-AS} has degree $p$ in $\mathcal R$, hence adjoining $\mathcal R$ increases the degree by at most another factor $p$. As $\Wcal=X\mathcal R/q$ belongs to $\Fp(X,q,\mathcal R)$, the asserted degree bound follows.
\end{proof}

\begin{proof}[Proof of the automaticity assertion in Theorem~\ref{thm:automatic}]
Corollary~\ref{cor:R-AS} and Christol's theorem \cite{Christol} imply that $(W_k)_{k\ge0}$ is $p$-automatic.
\end{proof}

We next give a direct finite-state evaluator. This is more explicit than the existence statement furnished by Christol's theorem. Define
\begin{equation}\label{eq:h-def}
 \omega_p(s)=
 \begin{cases}
 0,&s=0,\\
 \lambda_s s!,&1\le s\le p-1,\\
 -1,&s=p.
 \end{cases}
\end{equation}
Thus, directly from the definition of $\rho_p$,
\begin{equation}\label{eq:rho-h}
 \rho_p(n)=\frac{\omega_p(s_p(n))}{f_p(n)}
 \quad\text{whenever }s_p(n)\le p,
\end{equation}
and $\rho_p(n)=0$ when $s_p(n)>p$.

\begin{theorem}[Explicit weighted automaton]\label{thm:weighted-automaton}
Let $k\ge p-1$ and put
\[
 K=k-(p-1)=\sum_{i=0}^{L-1}\kappa_ip^i,
\]
where $0\le \kappa_i\le p-1$; if $K=0$, take $L=1$ and $\kappa_0=0$. Use the state set
\[
 \mathcal S_p=\{(c,\alpha,s):0\le c,\alpha\le p-1,\ 0\le s\le p\}.
\]
The evaluator is the following finite algorithm.
\begin{enumerate}
\item \emph{Initialize.} Let $v_0$ be the vector supported at $(0,p-1,0)$ with value $1$.
\item \emph{Process digit $\kappa_i$.} For $i=0,\ldots,L-1$, first set every entry of $v_{i+1}$ equal to zero. For each state $(c,\alpha,s)$ and each $\alpha'$ with $0\le \alpha'\le \alpha$, let $d\in\{0,\ldots,p-1\}$ be the least residue of
\begin{equation}\label{eq:automaton-d}
 d\equiv \kappa_i-c+\alpha'\pmod p,
\end{equation}
and put
\begin{equation}\label{eq:automaton-carry}
 c'=\frac{d+(p-1)\alpha'+c-\kappa_i}{p}.
\end{equation}
If $s+d\le p$, add
\begin{equation}\label{eq:automaton-transition}
 v_i(c,\alpha,s)\binom{\alpha}{\alpha'}\frac1{d!}
\end{equation}
to $v_{i+1}(c',\alpha',s+d)$. All arithmetic is in $\Fp$.
\item \emph{Output.} After all $L$ digits have been read, return
\begin{equation}\label{eq:automaton-final}
 W_k=\rho_p(k)-\sum_{\alpha=0}^{p-1}\sum_{s=0}^p \omega_p(s)v_L(0,\alpha,s).
\end{equation}
\end{enumerate}
For $0\le k<p-1$ one simply has $W_k=\rho_p(k)$.
\end{theorem}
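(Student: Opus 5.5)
The plan is to show that the evaluator is a digit-by-digit implementation of the digital formula \eqref{eq:digital-W} of Theorem~\ref{thm:digital}. For $0\le k<p-1$ only $m=0$ occurs there, and $\delta_p(0)=1$ gives $W_k=\rho_p(k)$. For $k\ge p-1$ the term $m=0$ is again $\rho_p(k)$, the first summand in \eqref{eq:automaton-final}. It therefore suffices to prove
\[
 \sum_{m\ge1,\ m(p-1)\le k}\delta_p(m)\rho_p\bigl(k-m(p-1)\bigr)
 =-\sum_{\alpha,s}\omega_p(s)\,v_L(0,\alpha,s).
\]

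\emph{Reindexing.} Put $\mu=m-1\ge0$ and $n=k-m(p-1)$. Then $n+(p-1)\mu=K$, and the terms of the left side are in bijection with pairs $(n,\mu)$ of nonnegative integers satisfying this equation. Write $\mu=\sum\alpha_ip^i$ and $n=\sum d_ip^i$. By \eqref{eq:delta-lucas} and \eqref{eq:rho-h}, the corresponding term equals
\[
 -\binom{p-1}{\alpha_0}\binom{\alpha_0}{\alpha_1}\cdots\cdot\frac{\omega_p(s_p(n))}{f_p(n)}
\]
when $s_p(n)\le p$, and $0$ otherwise.

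\emph{Digitwise addition.} Compute $n+(p-1)\mu=K$ as a schoolbook addition in base $p$, adding at position $i$ the digit $d_i$, the quantity $(p-1)\alpha_i$ and the incoming carry $c$. The outgoing carry is
\[
 c'=\frac{d_i+(p-1)\alpha_i+c-\kappa_i}{p}.
\]
Reducing modulo $p$ and using $(p-1)\alpha_i\equiv-\alpha_i$ gives exactly \eqref{eq:automaton-d} and \eqref{eq:automaton-carry}. One checks that $0\le c'\le p-1$: the numerator is at most $(p-1)+(p-1)^2+(p-1)$, which is less than $p^2$. Hence the carry fits into the state component $c$.

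\emph{Path decomposition.} Conversely, given the current carry and a choice of $\alpha_i=\alpha'$, the digit $d_i$ is forced. So each admissible sequence $(\alpha_0,\alpha_1,\dots)$ determines $n$ uniquely, and $K=n+(p-1)\mu$ holds precisely when the final carry is $0$. No digits beyond position $L-1$ are possible: since $n\ge0$ we have $(p-1)\mu\le K<p^L$, so $\mu<p^L$, and the final carry $0$ forces $n<p^L$ as well. The state component $\alpha$ stores the previous digit $\alpha_{i-1}$, initialized to $p-1$, so that the transition factor $\binom{\alpha}{\alpha'}$ reproduces the Lucas chain $\binom{p-1}{\alpha_0}\binom{\alpha_0}{\alpha_1}\cdots$. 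The factor $1/d!$ accumulates $1/f_p(n)$, and $s$ records the running digit sum $s_p(n)$. A path contributes to the Lucas product only when $\alpha_0\ge\alpha_1\ge\cdots$, which the constraint $\alpha'\le\alpha$ enforces; for other sequences the binomial factor vanishes, consistently with $\delta_p(m)=0$. Discarding transitions with $s+d>p$ drops exactly the terms with $s_p(n)>p$, where $\rho_p(n)=0$; since the digit sum is monotone along a path, nothing else is lost. No constraint is imposed on the final $\alpha$, because all higher digits of $\mu$ are $0$ and $\binom{\alpha_{L-1}}{0}=1$. Hence $v_L(0,\alpha,s)$ is the sum of $\prod\binom{\alpha_{i-1}}{\alpha_i}/f_p(n)$ over the pairs $(n,\mu)$ with $s_p(n)=s$ and last $\mu$-digit $\alpha$. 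Multiplying by $-\omega_p(s)$ and summing recovers the displayed identity.

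The main obstacle is the bijection bookkeeping in the path decomposition: showing that every pair $(n,\mu)$ corresponds to exactly one automaton path ending in carry $0$, and that carries never leave the range $0,\dots,p-1$. I would argue this by induction on $i$, with the invariant that $\sum_{j<i}(d_j+(p-1)\alpha_j)=\sum_{j<i}\kappa_jp^j+c_ip^i$. The $O(\log_p(k+2))$ count in Theorem~\ref{thm:automatic} then follows, since there are $L$ digit steps over a state set of fixed size $p^2(p+1)$.
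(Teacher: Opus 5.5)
Your proposal is correct and follows essentially the same route as the paper. You separate the $m=0$ term, reindex by $m-1$ and $n=k-m(p-1)$ with $n+(p-1)(m-1)=K$, and run the base-$p$ column addition with carries. Along each path you read off the Lucas chain from \eqref{eq:delta-lucas}, the factor $1/f_p(n)$, and the digit sum $s_p(n)$, and you combine these via \eqref{eq:rho-h}.

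There is one typo. Your carry invariant needs the place values on the left: $\sum_{j<i}(d_j+(p-1)\alpha_j)p^j=\sum_{j<i}\kappa_jp^j+c_ip^i$.
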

\begin{proof}
Separate the $m=0$ term in \eqref{eq:digital-W}. For $m\ge1$ write $t=m-1$ and put
\[
 n=k-m(p-1).
\]
Then
\begin{equation}\label{eq:carry-identity}
 n+(p-1)t=K.
\end{equation}
Write
\[
 t=\sum_{i=0}^{L-1}\alpha_ip^i,\qquad
 n=\sum_{i=0}^{L-1}d_ip^i.
\]
No higher digit of $t$ can occur, because $(p-1)t\le K<p^L$. The ordinary carries in \eqref{eq:carry-identity} satisfy
\begin{equation}\label{eq:carry-columns}
 d_i+(p-1)\alpha_i+c_i=\kappa_i+pc_{i+1},\qquad c_0=0.
\end{equation}
Given $c_i$, $\kappa_i$, and $\alpha_i$, equations~\eqref{eq:automaton-d} and \eqref{eq:automaton-carry} are exactly the unique choice of $d_i\in\{0,\ldots,p-1\}$ and the resulting carry. Moreover $0\le c_{i+1}\le p-1$.

By \eqref{eq:delta-lucas}, the contribution is zero unless
\[
 p-1\ge \alpha_0\ge \alpha_1\ge\cdots\ge0.
\]
Starting with the auxiliary previous digit $\alpha_{-1}=p-1$, the product of transition factors
\[
 \binom{\alpha_{-1}}{\alpha_0}\binom{\alpha_0}{\alpha_1}\binom{\alpha_1}{\alpha_2}\cdots
\]
is therefore $-\delta_p(t+1)$. The factors $1/d_i!$ accumulate to $1/f_p(n)$, while the state variable $s$ records $s_p(n)$. Paths with $s_p(n)>p$ may be discarded because $\rho_p(n)=0$. By \eqref{eq:rho-h}, multiplying an accepted path---that is, one with final carry $c_L=0$---by $\omega_p(s)$ gives
\[
 -\delta_p(t+1)\rho_p(n).
\]
Summing over all paths therefore yields the negative of the $m\ge1$ part of \eqref{eq:digital-W}. Adding the $m=0$ term $\rho_p(k)$ proves \eqref{eq:automaton-final}.
\end{proof}

\begin{proof}[Proof of the complexity assertion in Theorem~\ref{thm:automatic}]
For each input digit $\kappa\in\{0,\ldots,p-1\}$, the transition \eqref{eq:automaton-transition} is a fixed linear transformation of the vector space $\Fp^{\mathcal S_p}$. Thus the weighted automaton has $p^2(p+1)$ basic states and processes $O(\log_p(k+2))$ digits. For fixed $p$ this is $O(\log k)$ field operations, with a constant depending only on $p$.

If desired, one may determinize completely: the current vector $(v_i(c,\alpha,s))$ itself is a state, and there are at most
\[
 p^{p^2(p+1)}
\]
such vectors. This bound is deliberately crude and is not intended as a practical determinization procedure; the weighted form of Theorem~\ref{thm:weighted-automaton} is the practical evaluator. If desired, precomputing the actually reachable vectors and their $p$ outgoing transitions turns evaluation into one table lookup per base-$p$ digit. The separate computation of $\rho_p(k)$ is itself finite-state from its digit formula, and subtraction of the fixed integer $p-1$ is a finite-state digit operation. Hence the displayed weighted construction yields an ordinary finite automaton with output after $p$-dependent preprocessing. The carry construction naturally reads the least significant digit first; reversing the corresponding regular output languages gives an equivalent automaton in the usual most-significant-digit convention.
\end{proof}

\begin{remark}[Automatic convolution versus the Artin--Hasse coefficients]
The automaticity proved here is a property of the weighted $p$-section convolution $(W_k)_{k\geq0}$, not of the Artin--Hasse coefficient sequence itself. Kramer-Miller \cite{KramerMiller} proved that the reduction
\[
 \overline{\operatorname{AH}_p}(X)=\sum_{n\geq0}a_nX^n\in\Fp[[X]]
\]
of the Artin--Hasse exponential is transcendental over $\Fp(X)$, answering a question of Thakur. By Christol's theorem, the $p$-kernel of $(a_n)_{n\geq0}$ is therefore infinite; equivalently, $(a_n)_{n\geq0}$ is not $p$-automatic. There is consequently no conflict with Theorem~\ref{thm:automatic}: the algebraic series $\Wcal(X)$ is obtained from a particular weighted convolution of the nonautomatic Artin--Hasse coefficients. Thus this convolution exhibits substantially greater finite-state rigidity than the underlying coefficient sequence.
\end{remark}

\section{\texorpdfstring{Recovering the underlying $p$-section coefficients}{Recovering the underlying p-section coefficients}}\label{sec:recovery}
The recovery branch has three steps.  Proposition~\ref{prop:blind-spot} first uses the weighted and unweighted convolution identities to isolate exactly what is invisible in characteristic $p$; this step is motivational rather than structural.  The subsection \emph{The exact $p$-adic fixed-point map} then returns to Proposition~\ref{prop:conjugated} itself and turns the exact conjugated $p$-section equation into a strict $p$-adic fixed point for $L=\theta C/C$, independently of Theorems~\ref{thm:global}--\ref{thm:automatic}.  The following subsection, \emph{Witt-coordinate reconstruction and coefficient recovery}, converts the resulting logarithmic precision into coefficient precision by big-Witt reconstruction.  The closed expansion through $p^3$ is computationally useful but not needed for this argument, so its normal-ordering formulas are collected separately in Appendix~\ref{app:third-order}.  A final remark records the finite-depth Cartier precision descent.

Put
\[
 \beta_k=a_{kp}\in\Fp,\qquad \eta_k=[X^k]\frac{X}{q(X)}.
\]
By Lemma~\ref{lem:reciprocal},
\begin{equation}\label{eq:eta-coeff}
 \eta_k=\begin{cases}
 \delta_p(k/(p-1)),&(p-1)\mid k,\\
 0,&(p-1)\nmid k.
 \end{cases}
\end{equation}
The exact product identity and the weighted identity give two triangular recurrences.

\begin{proposition}[The exact blind spot]\label{prop:blind-spot}
Assume that $\beta_0,\ldots,\beta_{k-1}$ are known. If $p\nmid k$, then
\begin{equation}\label{eq:blind-weighted}
 \beta_k=(-1)^kk^{-1}\left(W_k-\sum_{r=1}^{k-1}(-1)^r r\,\beta_r\beta_{k-r}\right).
\end{equation}
If $p\mid k$ and $k$ is even, then
\begin{equation}\label{eq:blind-unweighted}
 \beta_k=\frac12\left(\eta_k-\sum_{r=1}^{k-1}(-1)^r\beta_r\beta_{k-r}\right).
\end{equation}
Thus the two convolution identities fail simultaneously to determine $\beta_k$ only when $k=p\ell$ with $\ell$ odd.
\end{proposition}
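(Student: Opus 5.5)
The plan is to read off the coefficient of $X^k$ in the two exact identities already established, and to isolate the terms involving $\beta_k$. Both identities are quadratic in the $\beta$'s, and the normalization $\beta_0=a_0=1$ is the only input beyond them.

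\emph{Weighted identity.} By Theorem~\ref{thm:global} we have
\[
 W_k=\sum_{r=0}^k(-1)^r r\,\beta_r\beta_{k-r}.
\]
In this sum the $r=0$ term vanishes because of the factor $r$. The $r=k$ term equals $(-1)^k k\,\beta_k\beta_0=(-1)^k k\,\beta_k$. The remaining terms, with $1\le r\le k-1$, involve only $\beta_1,\ldots,\beta_{k-1}$. If $p\nmid k$, then $k$ is invertible in $\Fp$, and solving for $\beta_k$ gives \eqref{eq:blind-weighted}. Since $(-1)^k$ is its own inverse, the sign comes out exactly as displayed.

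\emph{Unweighted identity.} Write $G(X)=\sum_n(-1)^n\beta_nX^n$, so that $G(-X)=\sum_n\beta_nX^n$. The product identity \eqref{eq:AM-global-product} gives
\[
 G(X)G(-X)=X/q(X).
\]
Extracting the coefficient of $X^k$ yields
\[
 \eta_k=\sum_{r=0}^k(-1)^r\beta_r\beta_{k-r}.
\]
The two endpoint terms $r=0$ and $r=k$ together contribute $(1+(-1)^k)\beta_k$. When $k$ is even this is $2\beta_k$, and $2$ is invertible because $p$ is odd. The inner terms again involve only earlier $\beta$'s, so solving gives \eqref{eq:blind-unweighted}. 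The value of $\eta_k$ itself is supplied by \eqref{eq:eta-coeff}, which is not even needed for the solvability statement.

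\emph{Where both identities fail.} The weighted identity determines $\beta_k$ whenever $p\nmid k$. The unweighted identity determines it whenever $k$ is even, and in particular whenever $k$ is even with $p\mid k$. Neither applies exactly when $p\mid k$ and $k$ is odd. Writing $k=p\ell$, oddness of $k$ is equivalent to oddness of $\ell$ because $p$ is odd.

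There is no real obstacle here. The only points needing care are bookkeeping: the sign conventions relating $G(\pm X)$ to the $\beta_r$, and checking that both endpoint terms in the unweighted sum are genuinely $\beta_k\beta_0$ rather than something involving other unknowns. For the ``fail simultaneously'' claim, I read it as saying that these two triangular formulas give no information in that case: the coefficient of $\beta_k$ is $0$ in both. That is immediate from the computation above, since $k\equiv0$ and $1+(-1)^k=0$ in $\Fp$.
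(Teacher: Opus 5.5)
Your proposal is correct and follows the paper's proof essentially verbatim: you isolate the endpoint term $(-1)^k k\,\beta_k$ in the weighted sum and the endpoint contribution $(1+(-1)^k)\beta_k$ in the coefficient of $X^k$ in $G(X)G(-X)=X/q(X)$, then invert $k$ or $2$ as appropriate. One small correction of attribution: the expansion $W_k=\sum_{r=0}^k(-1)^r r\,\beta_r\beta_{k-r}$ is the definition of $W_k$, not a consequence of Theorem~\ref{thm:global}; that theorem is what makes $W_k$ independently computable, so that the recurrence actually determines $\beta_k$.
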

\begin{proof}
Separating the endpoint $r=k$ in the weighted convolution gives
\begin{equation}\label{eq:weighted-endpoint}
 W_k=(-1)^kk\beta_k+\sum_{r=1}^{k-1}(-1)^r r\,\beta_r\beta_{k-r},
\end{equation}
which gives \eqref{eq:blind-weighted} when $p\nmid k$. The product identity \eqref{eq:AM-global-product} gives
\begin{equation}\label{eq:unweighted-endpoint}
 \eta_k=(1+(-1)^k)\beta_k+\sum_{r=1}^{k-1}(-1)^r\beta_r\beta_{k-r}.
\end{equation}
For even $k$, the coefficient $2$ is invertible because $p$ is odd; for odd $k$ the endpoint disappears. The simultaneous failure is therefore exactly at odd multiples of $p$.
\end{proof}

\subsection*{\texorpdfstring{The exact $p$-adic fixed-point map}{The exact p-adic fixed-point map}}
The finite-order lifts are shadows of an exact $p$-adic fixed-point equation. Put
\begin{equation}\label{eq:f-L0}
 f(Y)=-q(Y),\qquad L_{0,p}(Y)=\Phi_p(f(Y))-D_p(Y).
\end{equation}
For $H\in\Zp[[Y]]$, let $M_H$ denote multiplication by $H$, and define
\begin{equation}\label{eq:E-T}
 E_H=\theta+M_H,\qquad T_H=M_f+pE_H.
\end{equation}
Thus $T_H$ is a differential operator on $\Zp[[Y]]$.

The two normalized perturbations which occur below are integral to all orders, not merely modulo a fixed power of $p$.
\begin{lemma}[Integral normalized perturbations]\label{lem:fixed-operators}
For every $H\in\Zp[[Y]]$ the operators
\begin{equation}\label{eq:Q-V}
 Q_f(H):=\frac{T_H^p-M_f^p}{p^2},
\end{equation}
\begin{equation}\label{eq:V-def}
 V_f(H):=\frac{\Phi_p(T_H)-M_{\Phi_p(f)}}p
\end{equation}
map $\Zp[[Y]]$ to itself. Moreover they are $1$-Lipschitz in $H$: if
\[
 H\equiv H'\pmod{p^r},
\]
then, as operators on $\Zp[[Y]]$,
\[
 Q_f(H)\equiv Q_f(H')\pmod{p^r},\qquad
 V_f(H)\equiv V_f(H')\pmod{p^r}.
\]
\end{lemma}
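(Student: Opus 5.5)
The plan is to redo the word expansion of Lemma~\ref{lem:operator-frob}, but exactly rather than modulo $p^2$, and to keep careful track of the powers of $p$. Write $T_H=\mu+pE$ with $\mu=M_f$ and $E=E_H=\theta+M_H$. Both $\mu$ and $E$ preserve $\Zp[[Y]]$. Expanding $T_H^p$ into noncommutative words in $\mu$ and $pE$, the word with no $E$ is $\mu^p$. Every word with $j\ge2$ occurrences of $pE$ carries the scalar $p^j$, so after division by $p^2$ it is an integral operator. The remaining term is the one-$E$ part
\[
 S=p\sum_{i=0}^{p-1}\mu^iE\mu^{p-1-i}.
\]
Since $E\mu^r=\mu^rE+r\mu^{r-1}M_{\theta f}$ (the $M_H$ part of $E$ commutes with $\mu$), we get exactly
\[
 S=p^2\mu^{p-1}E+p^2\tfrac{p-1}{2}\mu^{p-2}M_{\theta f}.
\]
This is divisible by $p^2$ because $p$ is odd. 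Hence $Q_f(H)$ is the sum of $\mu^{p-1}E+\tfrac{p-1}2\mu^{p-2}M_{\theta f}$ and $\sum_{j\ge2}p^{j-2}(\text{words with }j\text{ copies of }E)$, an explicit integral noncommutative polynomial in $\mu$, $\theta$ and $M_H$.

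For $V_f(H)$, write $\Phi_p(Z)=\sum_n c_nZ^n$ with $c_n\in\mathbf Z$. Then
\[
 \Phi_p(T_H)-M_{\Phi_p(f)}=\sum_n c_n\bigl((\mu+pE)^n-\mu^n\bigr).
\]
Every word in the expansion of $(\mu+pE)^n-\mu^n$ contains at least one factor $pE$. So the difference is $p$ times an integral noncommutative polynomial in $\mu$ and $E$, and $V_f(H)$ maps $\Zp[[Y]]$ to itself. Here we use $M_{\Phi_p(f)}=\Phi_p(\mu)$, which holds because multiplication operators commute. No Frobenius cancellation is needed for this part.

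For the Lipschitz property, we use these explicit formulas. Both $Q_f(H)$ and $V_f(H)$ are finite $\mathbf Z$-linear combinations of words in the fixed operators $\mu$ and $\theta$ and in $M_H$; equivalently, they are polynomials in $E_H$ with integral operator coefficients independent of $H$. If $H\equiv H'\pmod{p^r}$, then $M_H-M_{H'}=p^rM_{(H-H')/p^r}$ with $(H-H')/p^r\in\Zp[[Y]]$. A telescoping difference of two words then differs by a sum of words in which exactly one factor is replaced by $M_H-M_{H'}$. Each such word is $p^r$ times an integral operator. Hence $Q_f(H)\equiv Q_f(H')$ and $V_f(H)\equiv V_f(H')\pmod{p^r}$ as operators on $\Zp[[Y]]$.

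The only delicate point is the one-$E$ term in $Q_f$. There the naive bound gives only divisibility by $p$, and we need the exact commutator computation, together with the oddness of $p$ through $\sum_{r=0}^{p-1} r=p(p-1)/2$, to obtain the second factor of $p$. Note that the commutator term involves $\theta f$ but not $H$. Everything else is bookkeeping of word expansions.
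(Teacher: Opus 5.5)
Your proposal is correct and follows the paper's proof essentially step for step. It uses the same exact word expansion of $(\mu+pE_H)^p$, and the same commutator identity $E\mu^r=\mu^rE+r\mu^{r-1}M_{\theta f}$ to produce the explicit integral formula for $Q_f(H)$. It makes the same observation that every word in $\Phi_p(\mu+pE_H)-\Phi_p(\mu)$ carries a factor $p$, and uses the same telescoping argument through $E_H-E_{H'}=M_{H-H'}$ for the $1$-Lipschitz property. The only difference is notational: your $S$ is $p$ times the paper's $S_1(E)$.
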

\begin{proof}
Write $\mu=M_f$ and $E=E_H$. In the noncommutative expansion of $(\mu+pE)^p$, let $S_j(E)$ denote the sum of the words containing exactly $j$ occurrences of $E$. Then
\[
 (\mu+pE)^p-\mu^p=pS_1(E)+\sum_{j=2}^p p^jS_j(E).
\]
Because the multiplication part of $E$ commutes with $\mu$, one has
\[
 E\mu^m=\mu^mE+m\mu^{m-1}M_{\theta f}.
\]
Hence
\[
 S_1(E)=p\mu^{p-1}E+\frac{p(p-1)}2\mu^{p-2}M_{\theta f}.
\]
It follows that the quotient in \eqref{eq:Q-V} is the integral operator
\begin{equation}\label{eq:Q-integral-form}
 Q_f(H)=\mu^{p-1}E+\frac{p-1}{2}\mu^{p-2}M_{\theta f}
       +\sum_{j=2}^p p^{j-2}S_j(E).
\end{equation}
Thus it is an integral noncommutative polynomial in $E_H$ with coefficients independent of $H$. Similarly, if $\Phi_p(Z)=\sum_n \phi_nZ^n$, then after expanding each $(\mu+pE)^n$ the difference $\Phi_p(\mu+pE)-\Phi_p(\mu)$ has a factor $p$; after division by $p$ the result is again an integral noncommutative polynomial in $E_H$.

Finally $E_H-E_{H'}=M_{H-H'}$. The difference of the values of either integral polynomial at $E_H$ and $E_{H'}$ is a sum of terms each containing at least one factor $E_H-E_{H'}$. If $H-H'\in p^r\Zp[[Y]]$, every such term maps $\Zp[[Y]]$ into $p^r\Zp[[Y]]$. This proves both Lipschitz assertions.
\end{proof}

Define
\begin{equation}\label{eq:F-map}
 \mathcal T_p(H)=\bigl(Q_f(H)+V_f(H)\bigr)1\in\Zp[[Y]].
\end{equation}
Lemma~\ref{lem:fixed-operators} shows at once that $\mathcal T_p$ is $1$-Lipschitz.

\begin{theorem}[Exact $p$-adic fixed point]\label{thm:fixed-point}
Let $L=\theta C/C$. Then
\begin{equation}\label{eq:fixed-point}
 L=L_{0,p}+p\mathcal T_p(L).
\end{equation}
It is the unique fixed point in $\Zp[[Y]]$ of $H\mapsto L_{0,p}+p\mathcal T_p(H)$. If
\begin{equation}\label{eq:fixed-iterates}
 L^{(0)}=L_{0,p},\qquad L^{(j+1)}=L_{0,p}+p\mathcal T_p(L^{(j)}),
\end{equation}
then
\begin{equation}\label{eq:fixed-precision}
 L^{(r-1)}\equiv L\pmod{p^r}\qquad(r\ge1).
\end{equation}
All calculations needed for \eqref{eq:fixed-precision} modulo $Y^N$ may be performed in the finite ring
\[
 (\mathbf Z/p^r\mathbf Z)[Y]/(Y^N),
\]
with two extra powers of $p$ retained temporarily when evaluating \eqref{eq:Q-V}.
\end{theorem}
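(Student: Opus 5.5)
The plan is to conjugate the exact equation of Proposition~\ref{prop:conjugated} by multiplication by $C$, so that it becomes an identity for the operator $T_L$ applied to the constant series $1$. Then I expand $P_p$ using \eqref{eq:P-decompose} and read off \eqref{eq:fixed-point} directly.

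\emph{Integrality and conjugation.} Since $u_0=1$, the series $C$ is a unit in $\Zp[[Y]]$, so $L=\theta C/C\in\Zp[[Y]]$. For any $g$ one has
\[
 M_C^{-1}(p\theta)M_C\,g=p\theta g+pLg,
\]
so $M_C^{-1}UM_C=M_f+p(\theta+M_L)=T_L$, with $f=-q$. Conjugating \eqref{eq:exact-U} therefore gives
\[
 P_p(T_L)1=Y
\]
in $\Qp[[Y]]$. All operators involved are polynomials in $\theta$ and multiplications, so this conjugation is legitimate at the level of $\Qp[[Y]]$.

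\emph{Expansion.} By \eqref{eq:P-decompose},
\[
 P_p(T_L)1=T_L^p1-T_L1+p\,\Phi_p(T_L)1.
\]
I evaluate the three pieces using Lemma~\ref{lem:fixed-operators}:
\[
 T_L^p1=f^p+p^2Q_f(L)1,\qquad T_L1=f+pL,\qquad \Phi_p(T_L)1=\Phi_p(f)+pV_f(L)1.
\]
Since $p$ is odd, $f^p-f-Y=-q^p+q-Y=-pD_p$ by \eqref{eq:Ddef}. Substituting gives
\[
 -pD_p-pL+p\Phi_p(f)+p^2\mathcal T_p(L)=0.
\]
Dividing by $p$ yields \eqref{eq:fixed-point}. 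The only delicate point is the bookkeeping of signs and of which operator acts on $1$; I expect this conjugation and sign tracking to be the main place where care is needed. The integrality of $Q_f$ and $V_f$ is already supplied by Lemma~\ref{lem:fixed-operators}.

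\emph{Contraction, uniqueness, precision.} By Lemma~\ref{lem:fixed-operators}, $\mathcal T_p$ is $1$-Lipschitz, so $H\mapsto L_{0,p}+p\mathcal T_p(H)$ multiplies $p$-adic distances by at most $p^{-1}$.

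\begin{itemize}
\item For uniqueness, suppose $H$ and $H'$ are both fixed points. If $H\equiv H'\pmod{p^r}$, then $H-H'=p(\mathcal T_p(H)-\mathcal T_p(H'))\equiv0\pmod{p^{r+1}}$. Hence $H=H'$.
\item For \eqref{eq:fixed-precision}, I induct on $r$. The base case is $L-L^{(0)}=p\mathcal T_p(L)\equiv0\pmod p$. If $L^{(r-1)}\equiv L\pmod{p^r}$, then $\mathcal T_p(L^{(r-1)})\equiv\mathcal T_p(L)\pmod{p^r}$, and so $L^{(r)}\equiv L\pmod{p^{r+1}}$.
\end{itemize}

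\emph{Finite computation.} Both $\theta$ and multiplication by a power series preserve the ideal $(Y^N)$. Hence everything modulo $Y^N$ can be computed in $(\mathbf Z/p^{r}\mathbf Z)[Y]/(Y^N)$. The one exception is $Q_f$, which is defined through a division by $p^2$; for it I compute $T_H^p-M_f^p$ modulo $p^{r+2}$ before dividing, which is exactly the claimed extra two powers of $p$.
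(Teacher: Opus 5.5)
Your proposal is correct and follows the paper's proof. You conjugate by the unit $C$ to get $P_p(T_L)1=Y$, split $P_p(Z)=Z^p-Z+p\Phi_p(Z)$, use $f^p-f-Y=-pD_p$ with the definitions of $Q_f$ and $V_f$, and then get the precision bound and uniqueness from the $1$-Lipschitz property of $\mathcal T_p$ by the same induction, spelling out $T_L^p1=f^p+p^2Q_f(L)1$, $\Phi_p(T_L)1=\Phi_p(f)+pV_f(L)1$ and $L\in\Zp[[Y]]$ a little more explicitly than the paper does.
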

\begin{proof}
Multiplication by the unit $C$ conjugates $\theta$ according to
\[
 C^{-1}\theta C=\theta+M_L.
\]
Since $U=p\theta+f$, Proposition~\ref{prop:conjugated} therefore becomes
\[
 P_p(T_L)1=Y.
\]
Using $P_p(Z)=Z^p-Z+p\Phi_p(Z)$ and $T_L1=f+pL$ gives
\[
 pL=T_L^p1-f-Y+p\Phi_p(T_L)1.
\]
Now
\[
 f^p-f-Y=-pD_p(Y),
\]
so after adding and subtracting $f^p$ and $p\Phi_p(f)$, and then using \eqref{eq:Q-V}--\eqref{eq:F-map}, we obtain exactly
\[
 L=\Phi_p(f)-D_p+p\mathcal T_p(L)=L_{0,p}+p\mathcal T_p(L).
\]
By Lemma~\ref{lem:fixed-operators}, $\mathcal T_p$ is $1$-Lipschitz. Thus the map $H\mapsto L_{0,p}+p\mathcal T_p(H)$ improves every $p$-adic congruence by one power of $p$. The fixed-point identity first gives $L\equiv L^{(0)}\pmod p$. If
\[
 L\equiv L^{(j)}\pmod{p^{j+1}},
\]
then
\[
 L-L^{(j+1)}=p\bigl(\mathcal T_p(L)-\mathcal T_p(L^{(j)})\bigr)
 \equiv0\pmod{p^{j+2}}.
\]
This proves \eqref{eq:fixed-precision}. If $H,H'$ are two fixed points, the same argument shows successively that $H\equiv H'\pmod{p^r}$ for every $r$; hence $H=H'$ coefficientwise.

For the final finite-truncation assertion, multiplication and $\theta$ preserve the ideal $(Y^N)$. Hence all operations may be performed in $(\mathbf Z/p^r\mathbf Z)[Y]/(Y^N)$; in the quotient defining $Q_f$ one retains two additional powers of $p$ temporarily before the exact division by $p^2$.
\end{proof}

\begin{proof}[Proof of Theorem~\ref{thm:all-orders}]
This is exactly Lemma~\ref{lem:fixed-operators} and Theorem~\ref{thm:fixed-point}.
\end{proof}

Theorem~\ref{thm:fixed-point} removes the finite-order operator obstruction altogether.  The closed third-order specialization is not needed for the general recovery theorem and is therefore deferred to Appendix~\ref{app:third-order}.  We proceed directly to the all-orders reconstruction.

\subsection*{Witt-coordinate reconstruction and coefficient recovery}
The fixed-point iteration computes logarithmic ghost components to arbitrary $p$-adic precision. We now invert those ghost components by big Witt coordinates, with a quantitative precision trade-off.

\begin{lemma}[$p$-adic logarithmic rigidity]\label{lem:witt-rigidity}
Let $R\ge1$ and $\Xi_1,\Xi_2\in1+Y\Zp[[Y]]$. If
\[
 \frac{\theta \Xi_1}{\Xi_1}\equiv\frac{\theta \Xi_2}{\Xi_2}\pmod{p^R},
\]
and
\[
 \frac{\Xi_1}{\Xi_2}=\prod_{n\ge1}(1-x_nY^n)^{-1}
\]
is its big Witt factorization, then
\begin{equation}\label{eq:witt-valuations}
 \vp(x_n)\ge\max\{R-\vp(n),0\}\qquad(n\ge1).
\end{equation}
Consequently, for $1\le t\le R$,
\begin{equation}\label{eq:witt-rigidity}
 \Xi_1\equiv \Xi_2\pmod{(p^t,Y^{p^{R-t+1}})}.
\end{equation}
In particular, a logarithmic derivative known modulo $p^R$ determines the reduction modulo $p$ of the underlying unit series through degree $p^R-1$.
\end{lemma}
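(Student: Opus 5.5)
Put $\Xi=\Xi_1/\Xi_2\in1+Y\Zp[[Y]]$. Since $\theta\log$ is additive,
\[
 \frac{\theta\Xi}{\Xi}=\frac{\theta\Xi_1}{\Xi_1}-\frac{\theta\Xi_2}{\Xi_2}\equiv0\pmod{p^R}.
\]
The first step is to take the big Witt factorization $\Xi=\prod_{n\ge1}(1-x_nY^n)^{-1}$. It exists and has $x_n\in\Zp$ because $\Xi$ has integral coefficients and constant term $1$: the $x_n$ are determined recursively by comparing coefficients of $Y^n$, and each $x_n$ enters linearly with coefficient $1$. Then compute ghost components:
\[
 \frac{\theta\Xi}{\Xi}=\sum_{n\ge1}\frac{nx_nY^n}{1-x_nY^n}=\sum_{N\ge1}w_NY^N,\qquad w_N=\sum_{d\mid N}d\,x_d^{N/d}.
\]
So the hypothesis becomes $w_N\equiv0\pmod{p^R}$ for all $N$.

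The second step, which I expect to be the main one, is to prove $\vp(x_n)\ge\max\{R-\vp(n),0\}$ by strong induction on $n$. Suppose the bound holds for all proper divisors $d$ of $n$. Then
\[
 nx_n=w_n-\sum_{d\mid n,\,d<n}d\,x_d^{n/d},
\]
and it suffices to show each term of the sum has valuation at least $R$. For $d\mid n$ with $d<n$, put $e=n/d\ge2$. If $\vp(x_d)\ge R-\vp(d)>0$, then
\[
 \vp\bigl(d\,x_d^{e}\bigr)\ge \vp(d)+e\bigl(R-\vp(d)\bigr)\ge R.
\]
If instead $R-\vp(d)\le0$, then $\vp(d)\ge R$ already. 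Hence $\vp(nx_n)\ge R$, so $\vp(x_n)\ge R-\vp(n)$, and integrality supplies the bound $\ge0$. The point to watch is that no separate Dwork-type congruence is needed: the crude estimate $e\ge2$ together with a positive exponent suffices, and the case split on the sign of $R-\vp(d)$ handles everything.

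The third step deduces \eqref{eq:witt-rigidity}. Fix $1\le t\le R$. For $n<p^{R-t+1}$ we have $\vp(n)\le R-t$, so $\vp(x_n)\ge t$ and $x_n\equiv0\pmod{p^t}$. Modulo $Y^{p^{R-t+1}}$, only the factors with $n<p^{R-t+1}$ matter, and each of these is $\equiv1\pmod{p^t}$. Therefore $\Xi\equiv1\pmod{(p^t,Y^{p^{R-t+1}})}$. Multiplying by $\Xi_2$, which is integral, gives the congruence for $\Xi_1$. Taking $t=1$ gives the final sentence of the lemma, namely agreement modulo $p$ through degree $p^R-1$.
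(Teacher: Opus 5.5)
Your proposal is correct and matches the paper's proof step for step: the same ghost identity $[Y^N]\theta\Xi/\Xi=\sum_{d\mid N}d\,x_d^{N/d}$, the same induction on $n$ with the case split on whether $\vp(d)<R$, and the same observation that the factors with $n<p^{R-t+1}$ are $\equiv1\pmod{p^t}$. You also make explicit two details the paper leaves implicit: that the $x_n$ lie in $\Zp$ (which the $\max\{\cdot,0\}$ bound and the case $\vp(d)\ge R$ rely on), and the final multiplication by $\Xi_2$.
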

\begin{proof}
Put $\Xi=\Xi_1/\Xi_2$. Taking logarithmic derivatives of the big Witt factorization gives
\begin{equation}\label{eq:ghost}
 [Y^N]\frac{\theta \Xi}{\Xi}=\sum_{d\mid N}d\,x_d^{N/d}.
\end{equation}
We prove \eqref{eq:witt-valuations} by induction on $n$. Put $s=\vp(n)$. If $s\ge R$ there is nothing to prove. For every proper divisor $d$ of $n$, write $h=n/d\ge2$ and $e=\vp(d)$. By induction, if $e<R$ then
\[
 \vp(dx_d^h)\ge e+h(R-e)\ge R,
\]
while if $e\ge R$ the factor $d$ already contributes $p^R$. Thus every proper-divisor term in \eqref{eq:ghost} is divisible by $p^R$. The left side is also divisible by $p^R$, so $nx_n$ is divisible by $p^R$ and \eqref{eq:witt-valuations} follows.

Now let $n<p^{R-t+1}$. Then $\vp(n)\le R-t$, so \eqref{eq:witt-valuations} gives $x_n\in p^t\Zp$. In the product for $\Xi$, all factors with index at least $p^{R-t+1}$ contribute only terms of that degree or higher, while every smaller-index factor is congruent to $1$ modulo $p^t$. This proves \eqref{eq:witt-rigidity}.
\end{proof}

Lemma~\ref{lem:witt-rigidity} converts the arbitrary logarithmic precision furnished by Theorem~\ref{thm:fixed-point} into coefficient precision for $C$. The next theorem makes that reconstruction completely explicit.

\begin{theorem}[Witt-coordinate recovery at arbitrary precision]\label{thm:witt-recovery}
Fix $R\ge1$, and write
\[
 L^{(R-1)}(Y)=\sum_{n\ge1}\ell_nY^n,\qquad \ell_n\in\mathbf Z/p^R\mathbf Z.
\]
For $1\le n<p^R$, write uniquely $n=p^sm$ with $0\le s<R$ and $p\nmid m$. Define $x_n$ recursively in increasing order of $n$, with $x_n$ taken modulo $p^{R-s}$, by
\begin{align}
 x_{p^sm}=m^{-1}\Bigg[&\frac1{p^s}\left(\ell_{p^sm}
 -\sum_{j=0}^{s-1}p^j\sum_{d\mid m}d\,x_{p^jd}^{p^{s-j}m/d}\right)\notag\\
 &-\sum_{\substack{d\mid m\\d<m}}d\,x_{p^sd}^{m/d}\Bigg]
 \pmod{p^{R-s}}.\label{eq:witt-recursion}
\end{align}
For $s=0$ the first double sum is empty. Then for every $1\le t\le R$, putting $M=p^{R-t+1}$, one has
\begin{equation}\label{eq:witt-product}
 C(Y)\equiv\prod_{1\le n<M}(1-x_nY^n)^{-1}\pmod{(p^t,Y^M)}.
\end{equation}
In particular,
\begin{equation}\label{eq:witt-modp}
 \sum_{k=0}^{p^R-1}a_{kp}Y^k\equiv\prod_{1\le n<p^R}(1-x_nY^n)^{-1}\pmod{Y^{p^R}}
\end{equation}
in $\Fp[[Y]]$.
\end{theorem}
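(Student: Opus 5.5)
The plan is to compare the recursively defined $x_n$ with the genuine big Witt coordinates of $C$ and then apply Lemma~\ref{lem:witt-rigidity}. Since $C\in1+Y\Zp[[Y]]$ (note $u_0=1$), it has a unique factorization $C=\prod_{n\ge1}(1-\xi_nY^n)^{-1}$ with $\xi_n\in\Zp$. By \eqref{eq:ghost}, $[Y^N]L=\sum_{d\mid N}d\,\xi_d^{N/d}$. By Theorem~\ref{thm:fixed-point}, $\ell_N\equiv[Y^N]L\pmod{p^R}$. The first step is therefore to show that \eqref{eq:witt-recursion} is simply the ghost relation solved for the top term. Write $N=p^sm$. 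The divisors of $N$ are $p^jd$ with $0\le j\le s$ and $d\mid m$. The terms with $j<s$ form the first double sum. The terms with $j=s$, $d<m$ form the second sum. The remaining term is $p^sm\,\xi_N$. So for the true coordinates one has
\[
 p^sm\,\xi_N=\ell_N-\sum_{j<s}\cdots-p^s\sum_{d<m}\cdots \pmod{p^R}.
\]

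The second step, which is the main obstacle, is the precision bookkeeping. I need two facts: the division by $p^s$ is legitimate, and $x_n\equiv\xi_n\pmod{p^{R-s}}$ with $s=\vp(n)$. I will argue by induction on $n$, assuming $x_{n'}\equiv\xi_{n'}\pmod{p^{R-\vp(n')}}$ for all $n'<n$. The key estimate is the standard Witt/Dwork-type congruence: if $a\equiv b\pmod{p^k}$, then $a^{p^e}\equiv b^{p^e}\pmod{p^{k+e}}$. Consider a term $p^j d\,x_{p^jd}^{p^{s-j}m/d}$ with $j<s$. The base is known modulo $p^{R-j}$ and is raised to a power divisible by $p^{s-j}$. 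Hence the term is determined modulo $p^{j+(R-j)+(s-j)}\supseteq p^{R}$, and in fact modulo $p^{R+s-j}$. Consequently the bracket differs from its true value only by multiples of $p^R$. For the true values, the ghost sum $\sum_{j<s}$ is congruent to $\ell_N$ modulo $p^s$; this is the usual ghost-component congruence, obtained by comparing with the divisor sum for $N/p$ and applying the same power lemma. So the division by $p^s$ is exact in $\mathbf Z/p^{R-s}$, and the bracket is determined modulo $p^{R-s}$. The $j=s$ terms carry base precision $p^{R-s}$, which is enough. Dividing by the unit $m$ yields $x_N\equiv\xi_N\pmod{p^{R-s}}$. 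One detail needs checking when the recursion is read with representatives: $(1/p^s)$ of a quantity known modulo $p^R$ must be computed from lifts, and it is well defined modulo $p^{R-s}$ precisely because of the congruence just established.

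Finally, take $n<M=p^{R-t+1}$. Then $\vp(n)\le R-t$, so $x_n\equiv\xi_n\pmod{p^t}$. Each factor $(1-x_nY^n)^{-1}$ is congruent to $(1-\xi_nY^n)^{-1}$ modulo $p^t$. Factors with $n\ge M$ are $\equiv1\pmod{Y^M}$. This gives \eqref{eq:witt-product}, exactly as in the last paragraph of the proof of Lemma~\ref{lem:witt-rigidity}. Taking $t=1$ gives $M=p^R$, and reducing $C$ modulo $p$ gives \eqref{eq:witt-modp}.
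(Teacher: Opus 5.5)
Your proposal is correct and follows the paper's proof: you solve the ghost identity at $N=p^sm$ for the top coordinate, track precision layer by layer by induction on $n$, and compare the truncated product factor by factor. You are also more careful than the paper in separating the computed $x_n$ from the true coordinates $\xi_n$. The Dwork-type power congruence and the comparison with $N/p$ are more than you need, though: the $j=s$ block of the ghost identity is visibly $p^s$ times a $p$-adic integer, and a layer-$j$ term needs $x_{p^jd}$ only modulo $p^{R-j}$ to be correct modulo $p^R$.
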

\begin{proof}
Apply the big Witt factorization to the actual series $C$:
\[
 C(Y)=\prod_{n\ge1}(1-x_nY^n)^{-1}.
\]
Its ghost components are the coefficients of $L=\theta C/C$. By Theorem~\ref{thm:fixed-point}, $L^{(R-1)}\equiv L\pmod{p^R}$. At $n=p^sm$, the ghost identity becomes
\begin{equation}\label{eq:witt-layered-ghost}
 \ell_{p^sm}\equiv\sum_{j=0}^s p^j\sum_{d\mid m}d\,x_{p^jd}^{p^{s-j}m/d}\pmod{p^R}.
\end{equation}
All terms with $j<s$ involve indices strictly smaller than $p^sm$. After subtracting them, the remainder is divisible by $p^s$. Division by $p^s$ leaves, modulo $p^{R-s}$,
\[
 \sum_{d\mid m}d\,x_{p^sd}^{m/d}.
\]
All terms with $d<m$ are already known, and the remaining term is $mx_{p^sm}$. Since $p\nmid m$, multiplication by $m$ is invertible modulo $p^{R-s}$, which gives \eqref{eq:witt-recursion}. Notice that the stored precision of every previously computed coordinate is sufficient: a term from layer $j$ is multiplied by $p^j$ and only needs $x_{p^jd}$ modulo $p^{R-j}$.

Fix now $t\le R$ and put $M=p^{R-t+1}$. If $n<M$, then $\vp(n)\le R-t$, so the recursion determines $x_n$ modulo at least $p^t$. Thus the truncated product in \eqref{eq:witt-product} is computable modulo $p^t$. It has the same ghost components as $C$ modulo $p^R$ through the required range; alternatively, apply Lemma~\ref{lem:witt-rigidity} to the ratio of $C$ and the corresponding full Witt product. This proves \eqref{eq:witt-product}. Taking $t=1$ gives \eqref{eq:witt-modp}.
\end{proof}

\begin{proof}[Proof of Theorem~\ref{thm:recovery}]
Lemma~\ref{lem:fixed-operators} and Theorem~\ref{thm:fixed-point} construct the $1$-Lipschitz map $\mathcal T_p$ and give
\[
 L^{(R-1)}\equiv\frac{\theta C}{C}\pmod{p^R}
\]
for every $R\ge1$. Theorem~\ref{thm:witt-recovery} then reconstructs
\[
 C\pmod{(p^t,Y^{p^{R-t+1}})}
\]
for every $1\le t\le R$. Given $k$ and a desired coefficient precision $p^t$, choose $R$ with $k<p^{R-t+1}$ to recover $u_{kp}\pmod{p^t}$.

Finally, if $p\nmid n$, the exact recurrence
\[
 nu_n=\sum_{p^i\le n}u_{n-p^i}
\]
may be read modulo $p^t$, because $n$ is invertible modulo $p^t$. Induction computes every $u_n\pmod{p^t}$ once the multiples of $p$ are supplied by the $p$-section algorithm. Taking $t=1$ gives the entire reduced coefficient sequence.
\end{proof}

\begin{corollary}[Closed recovery through $p^3$]\label{cor:p3-recovery}
The explicit series $L_p^{[3]}$ of Appendix~\ref{app:third-order}, Proposition~\ref{prop:third-order}, satisfies
\[
 L_p^{[3]}\equiv L^{(2)}\equiv\frac{\theta C}{C}\pmod{p^3}.
\]
Consequently Theorem~\ref{thm:witt-recovery} with $R=3$ recovers every $a_{kp}$ for $0\le k<p^3$. In particular this includes all odd $k$ and the endpoint $k=p^3-1$, whereas the version of record of Avitabile and Mattarei \cite[Proposition~2]{AM} treats the even indices $k<p^3-1$. The original preprint stated the smaller range $k<p^2-1$; the published paper strengthens that proposition to $k<p^3-1$.
\end{corollary}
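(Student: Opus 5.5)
The statement has two parts: the congruence $L_p^{[3]}\equiv L^{(2)}\equiv\theta C/C\pmod{p^3}$, and the recovery of every $a_{kp}$ with $0\le k<p^3$. I would prove them in that order, leaning on the general machinery already established.

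\emph{Step 1: the congruence.} The second congruence $L^{(2)}\equiv\theta C/C\pmod{p^3}$ is the case $r=3$ of \eqref{eq:fixed-precision} in Theorem~\ref{thm:fixed-point}, so nothing new is needed there. For the first congruence I would unwind the iteration explicitly modulo $p^3$. Note that $L^{(2)}=L_{0,p}+p\mathcal T_p(L^{(1)})$. Since $\mathcal T_p$ is $1$-Lipschitz (Lemma~\ref{lem:fixed-operators}), only $L^{(1)}\bmod p^2$ matters. Writing $L^{(1)}=L_{0,p}+p\,\mathcal T_p(L_{0,p})$, one gets
\[
 L^{(2)}\equiv L_{0,p}+p\,\mathcal T_p(L_{0,p})+p^2\,\overline{D\mathcal T_p}(L_{0,p})[\mathcal T_p(L_{0,p})]\pmod{p^3}.
\]
Here the last term is the first-order variation of $\mathcal T_p$ reduced mod $p$. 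It is legitimate because, in the integral noncommutative polynomials \eqref{eq:Q-integral-form} and in the analogous one for $V_f$, a perturbation $E_H\mapsto E_H+pM_{\Delta}$ contributes modulo $p^2$ only through terms linear in $M_\Delta$.

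I would then evaluate $\mathcal T_p(L_{0,p})$ modulo $p^2$ and the variation modulo $p$ by normal ordering. Concretely, I would move every $\theta$ to the right, where it kills $1$, and keep only words with at most two occurrences of $E$ in $Q_f$ and at most one extra power of $p$ in $V_f$. The result should then be matched term by term with the closed formula for $L_p^{[3]}$ in Proposition~\ref{prop:third-order}. This matching is the main obstacle: it is a bookkeeping computation of commutators $[\theta,M_{f^m}]$ and of the coefficients of $\Phi_p$ modulo $p^2$. I would organize it by the number of $\theta$-insertions, and I would cite Proposition~\ref{prop:third-order} directly if that proposition already states its equality with $L^{(2)}$ modulo $p^3$.

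\emph{Step 2: recovery.} Apply Theorem~\ref{thm:witt-recovery} with $R=3$ and $t=1$. Then $M=p^3$, and \eqref{eq:witt-modp} gives $\sum_{k<p^3}a_{kp}Y^k$ modulo $Y^{p^3}$ from $\ell_n$ modulo $p^3$. This covers every $0\le k\le p^3-1$, odd or even, including the endpoint. The comparison with \cite{AM} is just a reading of that reference's stated range and needs no proof.
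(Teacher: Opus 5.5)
Your Step~2 and the congruence $L^{(2)}\equiv\theta C/C\pmod{p^3}$ match the paper's argument. The latter is the case $r=3$ of \eqref{eq:fixed-precision}, and Theorem~\ref{thm:witt-recovery} with $R=3$, $t=1$ (so $M=p^3$) yields every $a_{kp}$ with $0\le k\le p^3-1$.

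For the first congruence, no matching of $L^{(2)}$ against $L_p^{[3]}$ is needed. Proposition~\ref{prop:third-order} already asserts $\theta C/C\equiv L_p^{[3]}\pmod{p^3}$, so $L_p^{[3]}\equiv L^{(2)}$ follows by transitivity with the congruence you already have. That is all the paper uses. Your unwinding of $L^{(2)}=L_{0,p}+p\mathcal T_p\bigl(L_{0,p}+p\mathcal T_p(L_{0,p})\bigr)$ with a first-order linearization is legitimate. It is in substance the appendix proof of that proposition, which expands $\mathcal T_p(H)$ modulo $p^2$ once, as in \eqref{eq:F-third}, and solves the exact fixed-point equation for $L$ order by order. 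The two routes agree because the linearization of $\mathcal T_p$ modulo $p$ is multiplication by $\mathsf A_{p,1}(f)$.

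If you do carry out that computation yourself, your truncation rule is wrong as stated. By \eqref{eq:Q-integral-form}, $Q_f(H)$ contains $pS_3(E_H)$. This survives modulo $p^2$ unless the full sum $S_3(E_H)$ of three-$E$ words is itself divisible by $p$.
\begin{itemize}
\item For $p\ge5$ it is divisible by $p$, but only after normal ordering the whole sum; this is the content of Lemma~\ref{lem:frob-p4}. So discarding the three-$E$ words must be proved, not assumed.
\item For $p=3$ it is false. Here $S_3(E)=E^3$, so $Q_f$ contains $3E_H^3$, and $3E_H^3 1\equiv 3(\theta^2H+H^3)\pmod 9$. These produce exactly the terms $\theta^2L_{0,3}+L_{0,3}^3$ in \eqref{eq:L2-3}.
\end{itemize}
Dropping those terms gives $L_{2,3}$ the $Y$-coefficient $0$ instead of $-1$. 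The resulting series then has $Y$-coefficient $-4$ rather than the correct $u_3=1/2\equiv-13\pmod{27}$. So your term-by-term matching would fail at $p=3$; citing Proposition~\ref{prop:third-order} avoids the issue entirely.
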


\begin{corollary}[Second-order recovery]\label{cor:p2-recovery}
The first fixed-point correction gives
\[
 L^{(1)}\equiv\frac{\theta C}{C}\pmod{p^2},
\]
and Theorem~\ref{thm:witt-recovery} with $R=2$ determines all $a_{kp}$ with $0\le k<p^2$.
\end{corollary}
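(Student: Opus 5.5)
This is a direct specialization of Theorem~\ref{thm:fixed-point} with $r=2$, followed by Theorem~\ref{thm:witt-recovery} with $R=2$ and $t=1$.

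\emph{Logarithmic congruence.} Apply \eqref{eq:fixed-precision} with $r=2$. This gives
\[
L^{(1)}=L_{0,p}+p\mathcal T_p(L_{0,p})\equiv L=\theta C/C\pmod{p^2}.
\]
Nothing further is needed here. Explicitly, the fixed-point identity \eqref{eq:fixed-point} gives $L\equiv L_{0,p}\pmod p$. Since $\mathcal T_p$ is $1$-Lipschitz (Lemma~\ref{lem:fixed-operators}), it follows that $p\mathcal T_p(L)\equiv p\mathcal T_p(L_{0,p})\pmod{p^2}$. If desired, one can also record that $\mathcal T_p(L_{0,p})$ reduces modulo $p$ to $\bigl(\overline{Q_f(L_{0,p})}+\overline{V_f(L_{0,p})}\bigr)1$, which by \eqref{eq:Q-integral-form} is
\[
\bigl(\mu^{p-1}E+\tfrac{p-1}{2}\mu^{p-2}M_{\theta f}+S_2(E)\bigr)1+V_f(L_{0,p})1 \pmod p,
\]
with $E=E_{L_{0,p}}$. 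This makes the first correction concrete, but the corollary only requires the congruence itself.

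\emph{Coefficient recovery.} Take $R=2$ in Theorem~\ref{thm:witt-recovery}, with data $\ell_n$ the coefficients of $L^{(1)}$ modulo $p^2$. The recursion \eqref{eq:witt-recursion} then produces $x_n$ for $1\le n<p^2$: modulo $p^2$ when $p\nmid n$ (layer $s=0$), and modulo $p$ when $n=pm$ (layer $s=1$). Choosing $t=1$ gives $M=p^{R-t+1}=p^2$, so \eqref{eq:witt-modp} yields
\[
\sum_{k<p^2}a_{kp}Y^k\equiv\prod_{n<p^2}(1-x_nY^n)^{-1}\pmod{Y^{p^2}}.
\]
This determines $a_{kp}$ for every $0\le k<p^2$. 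The case $k=0$ is trivial, since $a_0=1$.

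\emph{Where care is needed.} There is no genuine obstacle; the one point to check is the precision bookkeeping in the layer $s=1$. There the $j=0$ terms $d\,x_d^{pm/d}$ must be known modulo $p^2$, and they are, because layer $0$ is computed modulo $p^2$. This is exactly the remark in the proof of Theorem~\ref{thm:witt-recovery} that a layer-$j$ term only needs its coordinate modulo $p^{R-j}$. The reconstruction is also consistent with Lemma~\ref{lem:witt-rigidity}: $\vp(n)\le1=R-t$ for all $n<p^2$, so every factor is determined modulo $p$.
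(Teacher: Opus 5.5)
Your proposal is correct and is essentially the paper's (implicit) argument: the congruence is the case $r=2$ of \eqref{eq:fixed-precision} in Theorem~\ref{thm:fixed-point}, and the coefficient statement is Theorem~\ref{thm:witt-recovery} with $R=2$ and $t=1$ (so $M=p^2$). Your precision check for the layer $s=1$ matches the bookkeeping remark in the proof of that theorem.
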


\begin{remark}[No finite $p$-adic recovery barrier]
The third-order normal-ordering formulas are useful closed expressions, but they are not required in order to proceed to fourth and higher precision. The exact fixed-point equation \eqref{eq:fixed-point} packages all higher normal-ordering corrections into the integral $1$-Lipschitz map $\mathcal T_p$. Each iteration adds one correct $p$-adic digit to $\theta C/C$, and each additional digit extends the mod-$p$ reconstruction of $C$ from degrees below $p^r$ to degrees below $p^{r+1}$. Thus the earlier finite block and finite precision boundaries are features of explicit truncation, not intrinsic obstructions.
\end{remark}

\begin{remark}[Iterated $p$-sections and Cartier precision descent]\label{rem:cartier-descent}
For completeness, the fixed-point data also descend to any prescribed higher Cartier section, but with a depth-dependent precision cost.  Let
\[
 C_k(Y)=\Lambda_0^k\AH(Y)=\sum_{n\ge0}u_{p^kn}Y^n,
 \qquad L_k=\frac{\theta C_k}{C_k},
\]
where $\Lambda_0(\sum \xi_nY^n)=\sum \xi_{pn}Y^n$.  We use the standard projection formula
\[
 \Lambda_0\bigl(A(Y^p)B(Y)\bigr)=A(Y)\,\Lambda_0B(Y).
\]
Put
\[
 \Sigma_k(Y)=\frac{C_k(Y)}{C_{k+1}(Y^p)},
 \qquad \Gamma_k=\frac{\theta\Sigma_k}{\Sigma_k}.
\]
Applying the projection formula to $C_k(Y)=\Sigma_k(Y)C_{k+1}(Y^p)$ gives
\[
 C_{k+1}(Y)=(\Lambda_0\Sigma_k)(Y)C_{k+1}(Y),
\]
so, since $C_{k+1}$ is a unit, $\Lambda_0\Sigma_k=1$.  Taking logarithmic derivatives gives
\[
 L_k(Y)=\Gamma_k(Y)+pL_{k+1}(Y^p).
\]
Writing $\Sigma_k=1+\sum_{n\ge1}s_nY^n$ and $\Gamma_k=\sum_{n\ge1}\gamma_nY^n$, the condition $s_{pn}=0$ and the identity $\theta\Sigma_k=\Gamma_k\Sigma_k$ give a triangular recursion: for $p\nmid n$, $\gamma_n=[Y^n]L_k$ and one divides only by the unit $n$ to obtain $s_n$; for $p\mid n$, the equation $s_n=0$ determines $\gamma_n$ without division by $p$.  The final passage from $L_k-\Gamma_k$ to $L_{k+1}$ divides once by $p$.

Consequently, for every fixed $k\ge1$ and $m\ge1$, knowledge of $L_1\pmod{p^{m+k-1}}$ determines $L_k\pmod{p^m}$.  This is a finite-depth statement: the starting precision grows linearly with $k$, so it does not provide bounded-precision control uniformly as $k\to\infty$.
\end{remark}

\section{Consequences and remarks}\label{sec:consequences}
Let $\mathfrak s(p,k)$ denote the signed Stirling number of the first kind, so that
\[
 (Z)_p=\sum_{k=0}^p \mathfrak s(p,k)Z^k.
\]

\begin{corollary}[Artin--Hasse--Stirling--Bernoulli identity]
For $1<k<p$,
\[
 W_k=\frac{\mathfrak s(p,k)}p=\frac{B_{p-k}}k\qquad\text{in }\Fp.
\]
\end{corollary}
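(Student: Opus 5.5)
The plan is to combine the coefficient extraction already carried out for Theorem~\ref{thm:explicit-p2} (or equivalently the Corollary following Theorem~\ref{thm:global}) with a direct reading of the coefficients of $P_p(Z)=(Z)_p$. Nothing new about Artin--Hasse coefficients is needed: the identity $W_k=B_{p-k}/k$ for $1<k<p$ is already established. It remains to identify $B_{p-k}/k$ with the reduction of $\mathfrak s(p,k)/p$.

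First I will check that $\mathfrak s(p,k)/p$ makes sense in $\Fp$. Since $P_p(Z)\equiv Z^p-Z\pmod p$, every coefficient $\mathfrak s(p,k)$ with $1<k<p$ is divisible by $p$, so $\mathfrak s(p,k)/p\in\mathbf Z$. Next I will use the definition \eqref{eq:PhiDef}, namely $\Phi_p(Z)=(P_p(Z)-Z^p+Z)/p$. For $1<k<p$ the monomial $Z^k$ does not occur in $Z^p-Z$, so
\[
 [Z^k]\Phi_p(Z)=\frac{\mathfrak s(p,k)}{p}.
\]
By Proposition~\ref{prop:PhiBern}, the coefficient of $Z^k$ in $\varphi_p$ is $\lambda_k=B_{p-k}/k$ for $2\le k\le p-1$. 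This gives $\mathfrak s(p,k)/p\equiv B_{p-k}/k$ in $\Fp$.

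To link this with $W_k$, I will use Theorem~\ref{thm:global}: $\Wcal=(X/q)\,\overline{\mathcal H_p}$. Since $X/q\equiv1\pmod{X^{p-1}}$, we have $W_k=[X^k]\overline{\mathcal H_p}$ for $k<p-1$. In this range $q\equiv X$ and $\overline{D_p}\equiv0$ modulo $X^p$. Proposition~\ref{prop:global-log} then gives $[X^k]\overline{\mathcal H_p}=[Z^k]\varphi_p=\lambda_k$.

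The endpoint $k=p-1$ needs one more check, because $X/q=1-X^{p-1}+\cdots$ adds the term $-[X^0]\overline{\mathcal H_p}=0$. The constant term vanishes since $P_p(0)=0$, so this correction contributes nothing. This case is also already covered by Theorem~\ref{thm:explicit-p2} with $j=0$, $d=k$, which gives $W_k=\lambda_k$ directly.

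There is no real obstacle here. The only care points are the divisibility of $\mathfrak s(p,k)$ by $p$ and the boundary case $k=p-1$, where the factor $X/q$ first deviates from $1$.
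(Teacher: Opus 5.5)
Your proposal is correct and is the paper's proof: you read off $[Z^k]\Phi_p(Z)=\mathfrak s(p,k)/p$ for $1<k<p$, identify it with $\lambda_k=B_{p-k}/k$ via Proposition~\ref{prop:PhiBern}, and get $W_k=\lambda_k$ from Theorem~\ref{thm:global}. Your extra checks make explicit details the paper's two-line proof leaves implicit: the divisibility of $\mathfrak s(p,k)$ by $p$, and the vanishing constant term of $\overline{\mathcal H_p}$ at the endpoint $k=p-1$, where $X/q$ first differs from $1$.
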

\begin{proof}
For $1<k<p$, the coefficient of $Z^k$ in $\Phi_p(Z)$ is $\mathfrak s(p,k)/p$. Proposition~\ref{prop:PhiBern} and Theorem~\ref{thm:global} give the result.
\end{proof}

The global logarithmic derivative does not by itself determine $G$ uniquely in characteristic $p$, because differentiation annihilates arbitrary series in $X^p$. Below degree $p$, however, ordinary integration is unambiguous.

\begin{corollary}\label{cor:explicit-G}
In $\Fp[X]/(X^p)$,
\[
 G(X)=\exp\!\left(w_pX+\sum_{n=1}^{p-2}\frac{B_n}{n^2}X^{p-n}\right).
\]
Thus every $a_{kp}$ with $0\le k<p$ is given explicitly by a complete Bell polynomial in $w_p$ and the divided Bernoulli residues.
\end{corollary}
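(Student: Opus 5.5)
The plan is to integrate the truncated logarithmic-derivative identity \eqref{eq:old-logform}, which now follows from Theorem~\ref{thm:global} together with Proposition~\ref{prop:PhiBern}. Reducing \eqref{eq:global-log} modulo $X^p$ gives
\[
 X\frac{G'(X)}{G(X)}\equiv w_pX-\sum_{n=1}^{p-2}\frac{B_n}{n}X^{p-n}\pmod{X^p}.
\]
The key observation is that all exponents $1,p-n$ occurring here lie in $\{1,\ldots,p-1\}$. Thus division by the exponent is legitimate in $\Fp$.

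Set
\[
 E(X)=w_pX+\sum_{n=1}^{p-2}\frac{B_n}{n(p-n)}X^{p-n}.
\]
Since $1/(p-n)\equiv-1/n$ in $\Fp$, this is
\[
 E(X)=w_pX-\sum_{n=1}^{p-2}\frac{B_n}{n^2}X^{p-n}.
\]
Here I should double-check the sign against the stated formula. The displayed statement has $+B_n/n^2$, so I will carefully recompute. We have $\theta X^{p-n}=(p-n)X^{p-n}\equiv -nX^{p-n}$. So to get $\theta E=-\frac{B_n}{n}X^{p-n}$ we need the coefficient $c$ with $-nc=-B_n/n$, that is, $c=B_n/n^2$. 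So $E=w_pX+\sum B_n/n^2\,X^{p-n}$, matching the statement, and the previous line was an arithmetic slip. Then $\theta E\equiv XG'/G\pmod{X^p}$ and $E(0)=0$.

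Next, I work in the truncated ring $\Fp[X]/(X^p)$. Because $E$ has no constant term, $\exp(E)=\sum_{j<p}E^j/j!$ is well defined there. Terms with $j\ge p$ vanish since $E^p\in X^p\Fp[[X]]$, so only factorials $j!$ with $j<p$ appear and they are invertible. The usual identity $\theta\exp(E)=(\theta E)\exp(E)$ holds in this ring, since it is a formal identity with coefficients involving only $1/j!$ for $j<p$.

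Put $H=G\exp(-E)$, noting that $G(0)=a_0=1$. Then $\theta H\equiv0\pmod{X^p}$, so $kh_k=0$ for $1\le k\le p-1$, and all those coefficients vanish because $k$ is a unit. Hence $H\equiv H(0)=1$ and $G\equiv\exp(E)$ in $\Fp[X]/(X^p)$.

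The Bell-polynomial statement is then formal. Expanding the exponential, $a_{kp}(-1)^k=[X^k]\exp(E)$ is $B_k(x_1,\ldots,x_k)/k!$, where $x_j=j!\,[X^j]E$, and only $j,k<p$ are involved. The main (mild) obstacle is exactly this characteristic-$p$ legitimacy: making sure no $1/p$ arises from integration or from $1/j!$, which both hold because we stay below degree $p$.
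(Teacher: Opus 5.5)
Your proposal is correct and follows the paper's own proof: reduce the global identity modulo $X^p$ to \eqref{eq:old-logform}, then integrate term by term using $G(0)=1$ and $(p-n)^{-1}=-n^{-1}$ in $\Fp$; your checks on the truncated exponential and on the kernel of $\theta$ below degree $p$ make explicit what the paper leaves implicit. One correction to the write-up: the sign slip you caught is in your first displayed definition of $E$, whose coefficients should be $-B_n/(n(p-n))$ rather than $+B_n/(n(p-n))$, and your recomputed $E=w_pX+\sum_n (B_n/n^2)X^{p-n}$ is the right one.
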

\begin{proof}
Modulo $X^p$, Proposition~\ref{prop:global-log} reduces to \eqref{eq:old-logform}. Divide by $X$ and integrate term by term, using $G(0)=1$ and $(p-n)^{-1}=-n^{-1}$ in $\Fp$.
\end{proof}

\begin{remark}[Regular primes]
For $1\le j\le(p-3)/2$, Proposition~\ref{prop:PhiBern} gives
\[
 [X^{p-2j}]\Phi_p(X)=-\frac{B_{2j}}{2j}.
\]
Hence $p$ is regular if and only if all these coefficients are nonzero. Equivalently,
\[
 \ord_p s(p,p-2j)=1\qquad\left(1\le j\le\frac{p-3}{2}\right).
\]
This reformulation does not by itself resolve the infinitude of regular primes, but it places all irregular indices of a fixed prime in the first-order defect of the falling factorial.
\end{remark}

\begin{remark}[No block obstruction]
The apparent obstruction beyond the second block comes from expanding the factor $\AH(Y^p)^{1/p^2}$ separately. Lemma~\ref{lem:exact-section} instead keeps the complete factor
\[
 \mathcal B(Y)=\exp\!\left(\sum_{i\ge0}\frac{Y^{p^i}}{p^{i+1}}\right),
\]
whose conjugation replaces $p\theta$ by the integral operator $p\theta-q(Y)$. Lemma~\ref{lem:operator-frob} then controls its $p$th power on the whole of $\Zp[[Y]]$. Thus the denominators appearing in successive truncations are artifacts of the blockwise expansion rather than genuine obstructions to the weighted convolution. Theorem~\ref{thm:digital} goes further: coefficient extraction from the global identity itself is finite and explicit in every degree, so no residual blockwise dependence remains.
\end{remark}

\appendix
\section{Closed third-order expansion of the fixed-point map}\label{app:third-order}
This appendix records the normal-ordering calculation underlying the closed specialization through $p^3$.  It is independent of the all-orders Witt reconstruction in Section~\ref{sec:recovery}.  We first record the required operator expansions. In the next lemmas, $E$ is any $\mathbf Z_p$-linear operator on $\Zp[[Y]]$ satisfying
\[
 [E,M_g]=M_{\theta g}\qquad(g\in\Zp[[Y]]),
\]
while $f(Y)\in\Zp[[Y]]$, $f_j=\theta^jf$, and $\mu=M_f$ denotes multiplication by $f$. Both $E=\theta$ and $E=\theta+M_H$ satisfy this hypothesis.

\begin{lemma}[Polynomial perturbation through second order]\label{lem:poly-perturb}
For every $F(Z)\in\Zp[Z]$,
\begin{align}
F(\mu+pE)\equiv{}&F(\mu)+p\left(F'(\mu)E+\frac12F''(\mu)M_{f_1}\right)\label{eq:poly-perturb-a}\\
&+p^2\left(\frac12F''(\mu)E^2+\frac12F'''(\mu)M_{f_1}E
+\frac18F^{(4)}(\mu)M_{f_1^2}+\frac16F'''(\mu)M_{f_2}\right)
\pmod{p^3}.\label{eq:poly-perturb}
\end{align}
The displayed combinations are integral polynomial operators; in particular the formula is meaningful also when $p=3$.
\end{lemma}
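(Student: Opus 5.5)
By $\Zp$-linearity of both sides in $F$, it suffices to treat $F(Z)=Z^n$. The first thing to check is that the displayed combinations are integral, and this can be read off from the identities
\[
\tfrac12F''=\binom{n}{2}Z^{n-2},\quad \tfrac12F'''=3\binom n3Z^{n-3},\quad \tfrac16F'''=\binom n3Z^{n-3},\quad \tfrac18F^{(4)}=3\binom n4Z^{n-4}.
\]
In general these are $\tfrac12F''=F''/2!$, $\tfrac12F'''=3F'''/3!$ and $\tfrac18F^{(4)}=3F^{(4)}/4!$, and $F^{(k)}/k!\in\Zp[Z]$. So no division by $3$ ever occurs, and the case $p=3$ is covered.

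The plan is then to expand $(\mu+pE)^n$ into noncommutative words. Every word with at least three letters $E$ carries $p^3$ and is discarded. The word with no $E$ is $\mu^n$. The remaining words are normal-ordered, with all $E$'s moved to the right. Only two commutation rules are needed, both consequences of $[E,M_g]=M_{\theta g}$ and of the commutativity of multiplication operators:
\[
E\mu^c=\mu^cE+c\,\mu^{c-1}M_{f_1},\qquad EM_{f_1}=M_{f_1}E+M_{f_2}.
\]

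For the words with one $E$, I would sum $\mu^iE\mu^{n-1-i}$ over $i$, which gives $n\mu^{n-1}E+\sum_i(n-1-i)\mu^{n-2}M_{f_1}$. This equals $n\mu^{n-1}E+\binom n2\mu^{n-2}M_{f_1}$, an exact identity, because $M_{f_1}$ commutes with $\mu$. It produces the $p$-term of \eqref{eq:poly-perturb-a}.

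For the words with two $E$'s, namely $\mu^aE\mu^bE\mu^c$ with $a+b+c=n-2$, the coefficient $p^2$ means I only need the normal form modulo $p$, but in fact I will compute it exactly. Applying the rules successively gives
\[
\mu^{n-2}E^2+(b+2c)\,\mu^{n-3}M_{f_1}E+c(b+c-1)\,\mu^{n-4}M_{f_1}^2+c\,\mu^{n-3}M_{f_2}.
\]
Summing over all $(a,b,c)$ and using stars-and-bars sums reduces the $p^2$-term to four combinatorial identities:
\[
\sum1=\binom n2,\quad \sum c=\binom n3,\quad \sum(b+2c)=3\binom n3,\quad \sum c(b+c-1)=3\binom n4.
\]
The first and second are immediate. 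The third follows from the first two by the symmetry $\sum b=\sum c$. For the fourth, I would write $c(b+c-1)=cb+c(c-1)$ and evaluate $\sum bc=\binom n4$ and $\sum c(c-1)=2\binom n4$, both via generating functions $(1-x)^{-3}$ differentiated. These sums match $\tfrac12F''E^2$, $\tfrac12F'''M_{f_1}E$, $\tfrac18F^{(4)}M_{f_1^2}$ and $\tfrac16F'''M_{f_2}$ respectively.

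The main obstacle is carrying out this two-$E$ normal ordering correctly, in particular the $M_{f_1}^2$ coefficient, where it is easy to miscount the commutator of $E$ past $\mu^{b+c-1}M_{f_1}$. I would verify the four identities numerically for small $n$, say $n=2,3,4$, as a check.
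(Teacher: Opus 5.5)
Your proposal is correct and follows essentially the same route as the paper: reduce to $F(Z)=Z^n$, discard words with three or more $E$'s, and normal-order the one- and two-$E$ words to get the coefficients $\binom n2$, $3\binom n3$, $3\binom n4$, $\binom n3$, whose binomial form also gives integrality when $p=3$. Your explicit normal form $\mu^{n-2}E^2+(b+2c)\mu^{n-3}M_{f_1}E+c(b+c-1)\mu^{n-4}M_{f_1}^2+c\,\mu^{n-3}M_{f_2}$ for $\mu^aE\mu^bE\mu^c$ is correct, as are the four stars-and-bars sums; together they supply the derivation that the paper states without detail.
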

\begin{proof}
It is enough to take $F(Z)=Z^j$ and temporarily replace $p$ by a central parameter $t$. The commutator hypothesis gives
\[
 E\mu^r=\mu^rE+r\mu^{r-1}M_{f_1}.
\]
Normal-ordering the words in $(\mu+tE)^j$ which contain one occurrence of $E$ gives
\[
 j\mu^{j-1}E+\binom j2\mu^{j-2}M_{f_1}.
\]
The sum of the words containing exactly two occurrences of $E$ is
\[
 \binom j2\mu^{j-2}E^2+3\binom j3\mu^{j-3}M_{f_1}E
 +3\binom j4\mu^{j-4}M_{f_1^2}+\binom j3\mu^{j-3}M_{f_2}.
\]
These are exactly the coefficients of $t$ and $t^2$ in \eqref{eq:poly-perturb-a}--\eqref{eq:poly-perturb}. Terms containing at least three occurrences of $tE$ are divisible by $t^3$. Setting $t=p$ proves the result. Integrality follows either from the binomial-coefficient form just displayed or directly from the divisibility of the relevant formal derivatives.
\end{proof}

For the $p$th power itself we need one further piece of normal ordering.
\begin{lemma}[Operator Frobenius congruence modulo $p^4$]\label{lem:frob-p4}
If $p\ge5$, then
\begin{align}
(\mu+pE)^p\equiv{}&\mu^p
+p^2\left(\mu^{p-1}E+\frac{p-1}{2}\mu^{p-2}M_{f_1}\right)\label{eq:frob-p4-a}\\
&+p^3\left(\frac{p-1}{2}\mu^{p-2}E^2
+\frac{(p-1)(p-2)}2\mu^{p-3}M_{f_1}E\right.\notag\\
&\hspace{13mm}\left.+\frac{(p-1)(p-2)(p-3)}8\mu^{p-4}M_{f_1^2}
+\frac{(p-1)(p-2)}6\mu^{p-3}M_{f_2}\right)
\pmod{p^4}.\label{eq:frob-p4}
\end{align}
For $p=3$ one instead has
\begin{align}
(\mu+3E)^3\equiv{}&\mu^3+9(\mu^2E+\mu M_{f_1}+M_{f_2})\label{eq:frob-3-a}\\
&+27(\mu E^2+M_{f_1}E+E^3)\pmod{81}.\label{eq:frob-3}
\end{align}
\end{lemma}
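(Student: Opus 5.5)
The plan is to expand $(\mu+pE)^p$ into noncommutative words, exactly as in the proofs of Lemmas~\ref{lem:operator-frob} and \ref{lem:fixed-operators}, and sort them by $j$, the number of occurrences of $E$. Write $S_j(E)$ for the sum of the words with exactly $j$ occurrences, so that $(\mu+pE)^p=\sum_j p^jS_j(E)$. The terms with $j\ge4$ are already $0\pmod{p^4}$. Each remaining $S_j$ will be normal-ordered (all multiplications to the left, all $E$'s to the right) using only
\[
E\mu^r=\mu^rE+r\mu^{r-1}M_{f_1},\qquad EM_{f_1}=M_{f_1}E+M_{f_2}.
\]
This is the specialization $F(Z)=Z^p$ of the computation in Lemma~\ref{lem:poly-perturb}, carried to one further order.

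\emph{First and second order.} For $j=1$, Lemma~\ref{lem:poly-perturb} gives exactly
\[
S_1=p\mu^{p-1}E+\binom p2\mu^{p-2}M_{f_1},
\]
and multiplying by $p$ reproduces the $p^2$-bracket of \eqref{eq:frob-p4-a}. For $j=2$, the same lemma gives
\[
S_2=\binom p2\mu^{p-2}E^2+3\binom p3\mu^{p-3}M_{f_1}E+3\binom p4\mu^{p-4}M_{f_1^2}+\binom p3\mu^{p-3}M_{f_2}.
\]
For $p\ge5$ every one of these binomials $\binom pk$ has $2\le k\le4<p$, so each is divisible by $p$. Dividing out that $p$ yields the four displayed $p^3$-coefficients, for instance $3\binom p4/p=(p-1)(p-2)(p-3)/8$ and $3\binom p3/p=(p-1)(p-2)/2$. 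Only these binomials are divided by $p$, so the coefficients are integral.

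\emph{Third order (main obstacle).} It remains to show that $p^3S_3\equiv0\pmod{p^4}$, that is, that the normal-ordered form of $S_3$ has every coefficient divisible by $p$. I would argue as follows. A normal-ordered term in $S_3$ arises from choosing $k$ consecutive-relevant letters, namely the three $E$'s and the $k-3$ copies of $\mu$ they differentiate, with $3\le k\le6$. Its coefficient is then an integer constant, independent of $p$, times $\binom pk$, as in the displayed forms above. For $p\ge7$ we have $k<p$, so $p\mid\binom pk$ and the claim follows.

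The delicate case is $p=5$, $k=5$: there the three $E$'s differentiate two of the five $\mu$'s, which produces terms such as $M_{f_1f_2}$, $M_{f_1^3}$ and $M_{f_3}$ with $\binom55=1$. For this case I would first try a cyclic-rotation argument. Summing a word with all its cyclic rotations gives a symmetric count in which every orbit of non-constant words has size $p$, and I would check that normal ordering respects these orbits modulo $p$. If that fails, I would simply compute the few $k=5$ coefficients for $p=5$ by hand and verify that each is divisible by $5$.

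\emph{The case $p=3$.} Here I would expand $(\mu+3E)^3$ completely; there are only eight words. Normal-ordering them gives $S_1=3\mu^2E+3\mu M_{f_1}+M_{f_2}$ and $S_2=3\mu E^2+3M_{f_1}E+(\text{terms of } S_2 \text{ with coefficient divisible by }3)$, and $S_3=E^3$. Collecting by powers of $3$ modulo $81$ should give \eqref{eq:frob-3-a}--\eqref{eq:frob-3}. Unlike for $p\ge5$, the coefficients of $S_2$ need not all be divisible by $3$, so I would track them individually. In particular I would check whether the leftover $9M_{f_2}$ and $27M_{f_2}$-type contributions land exactly as stated.
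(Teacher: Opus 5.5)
Your framework matches the paper's. You sort the words of $(\mu+pE)^p$ by the number of $E$'s, normal-order them, and note that each coefficient is a $p$-independent integer times $\binom pk$, where $k$ counts the $E$'s plus the differentiated $\mu$'s. Your one- and two-$E$ brackets and the case $p\ge7$ are fine.

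The gap is at $p=5$, the only place where the three-$E$ claim has real content. There you neither compute the $k=5$ constants nor identify them correctly. A word of length $5$ with three $E$'s has only two $\mu$'s, so the $k=5$ terms are $M_{f_1^2}E$ and $M_{f_1f_2}$. By contrast, $M_{f_3}$ has $k=4$, and $M_{f_1^3}$ has $k=6$ and does not occur.

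Your cyclic-rotation idea fails as stated, because normal ordering does not respect rotation orbits of words modulo $p$. For $p=5$, the orbits of $EEE\mu\mu$ and of $EE\mu E\mu$ contribute $6$ and $4$ respectively to the coefficient of $M_{f_1f_2}$; neither is divisible by $5$. For $p=3$, the orbit $\{EE\mu,E\mu E,\mu EE\}$ sums to $3\mu E^2+3M_{f_1}E+M_{f_2}$, which is not divisible by $3$.

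What is needed is the explicit count, which the paper supplies through its closed normal-ordered form of $S_3$. The $k=5$ coefficients are $15\binom p5$ and $10\binom p5$, namely $\frac18F^{(5)}$ and $\frac1{12}F^{(5)}$ for $F=Z^p$. Both $15$ and $10$ are divisible by $5$.

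If you want a rotation argument, rotate configurations instead of words. For $k=p$, a configuration is a set partition of the $p$ slots: singletons are surviving $E$'s, and every other block is a $\mu$ (its largest element) together with the $E$'s that hit it. Rotation acts freely on set partitions of $\mathbf Z/p$ except the one-block and all-singleton partitions. Neither of those has exactly three $E$'s when $p=5$.

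Your $p=3$ computation is also wrong, in a way that would contradict the statement. A single $E$ produces only first derivatives, so $S_1=3\mu^2E+3\mu M_{f_1}$ has no $M_{f_2}$ term. Meanwhile $S_2=3\mu E^2+3M_{f_1}E+M_{f_2}$, whose last coefficient is $1$, not a multiple of $3$. The $9M_{f_2}$ in the statement is $3^2$ times that term of $S_2$. Your values would instead make the coefficient of $M_{f_2}$ congruent to $3$ modulo $9$. With the correct values, $(\mu+3E)^3=\mu^3+9(\mu^2E+\mu M_{f_1}+M_{f_2})+27(\mu E^2+M_{f_1}E+E^3)$ holds exactly, not just modulo $81$.
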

\begin{proof}
The terms containing no $E$, one $E$, and two $E$'s give the terms displayed in \eqref{eq:frob-p4-a}--\eqref{eq:frob-p4}, by the normal-ordering calculation in the proof of Lemma~\ref{lem:poly-perturb} with $F(Z)=Z^p$. Terms with at least four occurrences of $pE$ are automatically divisible by $p^4$.

It remains, for $p\ge5$, to consider the sum $S_3$ of the words containing exactly three occurrences of $E$. The coefficient of $t^3$ in $(\mu+tE)^p$ is
\begin{align*}
S_3={}&\frac16F'''(\mu)E^3
 +\frac14F^{(4)}(\mu)M_{f_1}E^2\\
&+\left(\frac16F^{(4)}(\mu)M_{f_2}
 +\frac18F^{(5)}(\mu)M_{f_1^2}\right)E\\
&+\frac1{24}F^{(4)}(\mu)M_{f_3}
 +\frac1{12}F^{(5)}(\mu)M_{f_1f_2}
 +\frac1{48}F^{(6)}(\mu)M_{f_1^3},
\end{align*}
where $F(Z)=Z^p$. Every coefficient in this expression is divisible by $p$ for $p\ge5$ (for $p=5$ the only potentially exceptional fifth-derivative coefficient is $15$). Hence $p^3S_3\equiv0\pmod{p^4}$.

For $p=3$ there are only eight terms to normal-order, and the exact contributions with one, two, and three occurrences of $3E$ give \eqref{eq:frob-3-a}--\eqref{eq:frob-3}.
\end{proof}

Return to $C(Y)=\sum_{n\ge0}u_{np}Y^n$ and put
\begin{equation}\label{eq:L0}
 f(Y)=-q(Y),\qquad L_{0,p}(Y)=\Phi_p(f(Y))-D_p(Y).
\end{equation}
For $p\ge5$ define the integral polynomials
\begin{equation}\label{eq:ABJK}
 \mathsf A_{p,1}(Z)=\frac{P_p'(Z)+1}{p},\qquad \mathsf A_{p,2}(Z)=\frac{P_p''(Z)}p,\qquad
 \mathsf A_{p,3}(Z)=\frac{P_p'''(Z)}p,\qquad \mathsf A_{p,4}(Z)=\frac{P_p^{(4)}(Z)}p.
\end{equation}
With $f_j=\theta^jf$ as above, set
\begin{equation}\label{eq:L1}
 L_{1,p}=\mathsf A_{p,1}(f)L_{0,p}+\frac12\mathsf A_{p,2}(f)f_1,
\end{equation}
\begin{align}
 L_{2,p}={}&\mathsf A_{p,1}(f)L_{1,p}+\frac12\mathsf A_{p,2}(f)(\theta L_{0,p}+L_{0,p}^2)
 +\frac12\mathsf A_{p,3}(f)f_1L_{0,p}\notag\\
&+\frac18\mathsf A_{p,4}(f)f_1^2+\frac16\mathsf A_{p,3}(f)f_2.\label{eq:L2}
\end{align}
For $p=3$, put
\begin{equation}\label{eq:A3}
 \mathsf A_{3,1}(Z)=\frac{P_3'(Z)+1}{3}=(Z-1)^2
\end{equation}
and define
\begin{equation}\label{eq:L1-3}
 L_{1,3}=\mathsf A_{3,1}(f)L_{0,3}+(f-1)f_1+f_2,
\end{equation}
\begin{equation}\label{eq:L2-3}
 L_{2,3}=\mathsf A_{3,1}(f)L_{1,3}+(f-1)(\theta L_{0,3}+L_{0,3}^2)
 +f_1L_{0,3}+\theta^2L_{0,3}+L_{0,3}^3.
\end{equation}
Finally set, for every odd prime $p$,
\begin{equation}\label{eq:Lp3-def}
 L_p^{[3]}(Y)=L_{0,p}(Y)+pL_{1,p}(Y)+p^2L_{2,p}(Y).
\end{equation}

\begin{proposition}[Third-order logarithmic derivative]\label{prop:third-order}
One has
\begin{equation}\label{eq:Lp3}
 \frac{\theta C(Y)}{C(Y)}\equiv L_p^{[3]}(Y)\pmod{p^3}
 \qquad\text{in }\Zp[[Y]].
\end{equation}
In particular, reducing \eqref{eq:Lp3} modulo $p^2$ gives
\begin{equation}\label{eq:Lp2}
 \frac{\theta C}{C}\equiv L_{0,p}+pL_{1,p}\pmod{p^2}.
\end{equation}
\end{proposition}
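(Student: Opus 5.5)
The plan is to expand the fixed-point identity of Theorem~\ref{thm:fixed-point} to second order in $p$, rather than repeating the ghost-coordinate work. By Theorem~\ref{thm:fixed-point}, $L:=\theta C/C=L_{0,p}+p\mathcal T_p(L)$. It therefore suffices to show
\[
 \mathcal T_p(L)\equiv L_{1,p}+pL_{2,p}\pmod{p^2}.
\]
Since $\mathcal T_p$ is $1$-Lipschitz (Lemma~\ref{lem:fixed-operators}), this is equivalent to two statements:
\[
 \mathcal T_p(L)\equiv L_{1,p}\pmod p,\qquad
 \mathcal T_p(L)\equiv L_{1,p}+pL_{2,p}\pmod{p^2}.
\]
The first statement lets us replace $L$ by $L_{0,p}+pL_{1,p}$ inside $\mathcal T_p$ modulo $p^2$. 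Because the substitution $H\mapsto L_{0,p}+pL_{1,p}$ enters only through $E_H=\theta+M_H$, every first-order term is simply evaluated with $H=L_{0,p}+pL_{1,p}$.

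Next I compute $\mathcal T_p(H)=(Q_f(H)+V_f(H))1$ using the normal-ordering lemmas with $E=E_H$. For $p\ge5$ I treat the two pieces separately.
\begin{itemize}
 \item For $Q_f(H)$, Lemma~\ref{lem:frob-p4} gives $Q_f(H)=\mu^{p-1}E+\frac{p-1}2\mu^{p-2}M_{f_1}+p(\cdots)$ modulo $p^2$.
 \item For $V_f(H)$, Lemma~\ref{lem:poly-perturb} with $F=\Phi_p$ gives $V_f(H)=\Phi_p'(\mu)E+\frac12\Phi_p''(\mu)M_{f_1}+p(\frac12\Phi_p''(\mu)E^2+\cdots)$ modulo $p^2$.
\end{itemize}
Applying these operators to $1$ uses $E1=H$, $E^21=\theta H+H^2$, and $M_{f_1}E1=f_1H$.

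The coefficients recombine as follows. Since $P_p=Z^p-Z+p\Phi_p$, we have $pZ^{p-1}+p\Phi_p'(Z)=P_p'(Z)+1$, so the combined coefficient of $E$ is $\mathsf A_{p,1}(f)$. Similarly, $\frac{p(p-1)}2Z^{p-2}+\frac12 p\Phi_p''$ divided by $p$ gives $\frac12\mathsf A_{p,2}$, and the same pattern holds for the higher derivatives, which produce $\mathsf A_{p,3}$ and $\mathsf A_{p,4}$. This is exactly the packaging $P_p^{(j)}/p$, since the $Z^p$ part contributes $p\cdot$(derivative of $Z^p$)$/p^2$.

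These steps give $\mathcal T_p(H)\equiv\mathsf A_{p,1}(f)H+\frac12\mathsf A_{p,2}(f)f_1\pmod p$. At $H=L_{0,p}$ this is $L_{1,p}$, which proves the first statement. At order $p$, the term $\mathsf A_{p,1}(f)H$ with $H=L_{0,p}+pL_{1,p}$ contributes $p\mathsf A_{p,1}(f)L_{1,p}$. The second-order brackets, evaluated at $H\equiv L_{0,p}$, contribute the remaining terms of $L_{2,p}$. The $-Z$ and $Z^p$ pieces of $P_p$ have vanishing higher derivatives apart from the $Z^p$ contributions already included, so nothing else appears.

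The main obstacle is $p=3$, where the denominators $\frac12,\frac16,\frac18$ and the divisibility of $S_3$ break down. There I would use the exact expansion \eqref{eq:frob-3-a}--\eqref{eq:frob-3} for $Q_f$. For $V_f$, note that $\Phi_3(Z)=(Z^3-3Z^2+2Z-Z^3+Z)/3=-Z^2+Z$ is quadratic, so $V_f$ can be expanded exactly: $\Phi_3(\mu+3E)-\Phi_3(\mu)=3E-3(\mu E+E\mu)-9E^2$.

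Dividing by $3$, applying the result to $1$ with $E\mu 1=f H+f_1$, and adding the $Q_f$ part $(\mu^2E+\mu M_{f_1}+M_{f_2})1+3(\mu E^2+M_{f_1}E+E^3)1$ should reproduce $L_{1,3}$ and $L_{2,3}$. Here $E^31=\theta^2H+3H\theta H+H^3$, and the middle term is absorbed modulo $3$. The delicate part is the bookkeeping: checking that the cross terms collapse to $(Z-1)^2=\mathsf A_{3,1}$ and to the stated coefficients $(f-1)$, $f_1$, and $\theta^2L_{0,3}+L_{0,3}^3$, and verifying the lowest-order identity modulo $3$ before lifting.
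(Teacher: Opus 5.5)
Your proposal is correct and is essentially the paper's proof. You expand $\mathcal T_p(H)$ modulo $p^2$ via Lemmas~\ref{lem:poly-perturb} and \ref{lem:frob-p4} with $E=E_H$, recombine the $Z^p$ and $\Phi_p$ contributions into $\mathsf A_{p,j}=P_p^{(j)}/p$, feed the result back into $L=L_{0,p}+p\mathcal T_p(L)$, and handle $p=3$ through the exact cubic expansion and $\Phi_3(Z)=-Z^2+Z$. Using the $1$-Lipschitz property to substitute $L_{0,p}+pL_{1,p}$ directly, instead of inserting the ansatz and comparing modulo $p^2$ and then $p^3$, is only a cosmetic difference. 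The $p=3$ bookkeeping you left unchecked does close up exactly to $L_{1,3}$ and $L_{2,3}$.
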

\begin{proof}
Write $L=\theta C/C$. We derive the expansion directly from the exact fixed-point equation \eqref{eq:fixed-point}; this also makes explicit where the $pL$ term created by conjugation enters the calculation. For an auxiliary $H\in\Zp[[Y]]$ put
\[
 E_H=\theta+M_H,\qquad T_H=M_f+pE_H.
\]
Thus $T_L=C^{-1}(p\theta+f)C$. For every $g\in\Zp[[Y]]$,
\[
 [E_H,M_g]=M_{\theta g},
\]
so Lemmas~\ref{lem:poly-perturb} and \ref{lem:frob-p4} apply directly with $E=E_H$. When the resulting operators are applied to $1$, the only additional evaluations are
\begin{equation}\label{eq:EH-on-one}
 E_H1=H,\qquad E_H^21=\theta H+H^2.
\end{equation}
This is precisely how the conjugation shift $+pH$ is accounted for; it is not to be absorbed into $f$. See Remark~\ref{rem:conjugation-shift} for the associated precision warning.

Assume first that $p\ge5$. Applying the two operator expansions to $T_H=M_f+pE_H$, dividing as in \eqref{eq:Q-V} and \eqref{eq:V-def}, and then evaluating at $1$ gives
\begin{align}
\mathcal T_p(H)\equiv{}&\mathsf A_{p,1}(f)H+\frac12\mathsf A_{p,2}(f)f_1\notag\\
&+p\left[\frac12\mathsf A_{p,2}(f)(\theta H+H^2)+\frac12\mathsf A_{p,3}(f)f_1H
+\frac18\mathsf A_{p,4}(f)f_1^2+\frac16\mathsf A_{p,3}(f)f_2\right]\pmod{p^2}.\label{eq:F-third}
\end{align}
Indeed, the coefficients in the first line are the sums
\[
 f^{p-1}+\Phi_p'(f)=\mathsf A_{p,1}(f),\qquad
 (p-1)f^{p-2}+\Phi_p''(f)=\mathsf A_{p,2}(f),
\]
and the analogous identities give $\mathsf A_{p,3}$ and $\mathsf A_{p,4}$ in the second line. Substituting $H=L$ in \eqref{eq:F-third} and using $L=L_{0,p}+p\mathcal T_p(L)$ yields
\begin{align}
L\equiv{}&L_{0,p}+p\left(\mathsf A_{p,1}(f)L+\frac12\mathsf A_{p,2}(f)f_1\right)\notag\\
&+p^2\left[\frac12\mathsf A_{p,2}(f)(\theta L+L^2)+\frac12\mathsf A_{p,3}(f)f_1L
+\frac18\mathsf A_{p,4}(f)f_1^2+\frac16\mathsf A_{p,3}(f)f_2\right]\pmod{p^3}.\label{eq:L-third}
\end{align}
Now insert
\[
 L=L_{0,p}+pL_{1,p}+p^2L_{2,p}.
\]
Comparison first modulo $p^2$ and then modulo $p^3$ gives exactly \eqref{eq:L1} and \eqref{eq:L2}.

For $p=3$, the same argument uses the exceptional Frobenius expansion \eqref{eq:frob-3-a}--\eqref{eq:frob-3} together with $\Phi_3(Z)=-Z^2+Z$. It gives, for arbitrary $H$,
\[
 \mathcal T_3(H)\equiv \mathsf A_{3,1}(f)H+(f-1)f_1+f_2
 +3\bigl((f-1)(\theta H+H^2)+f_1H+\theta^2H+H^3\bigr)\pmod9.
\]
Putting $H=L$ in the fixed-point equation, replacing $L$ by $L_{0,3}$ in the final bracket and by $L_{0,3}+3L_{1,3}$ where one further digit is required, gives \eqref{eq:L1-3} and \eqref{eq:L2-3}.
\end{proof}

\begin{remark}[On the conjugation shift]\label{rem:conjugation-shift}
Formula~\eqref{eq:Lp2} is sometimes easy to misread if one first conjugates $p\theta+f$ and then forgets that the perturbation operator is $E_L=\theta+M_L$, not the bare derivation $\theta$. The conjugated operator is
\[
 T_L=M_f+p(\theta+M_L),
\]
not $M_f+p\theta$. In the proof above the $+pL$ shift is retained in the perturbation operator $E_L=\theta+M_L$. Since $[E_L,M_f]=M_{f_1}$, the polynomial coefficients $\mathsf A_{p,1}(f),\mathsf A_{p,2}(f),\ldots$ continue to be evaluated at the original $f=-q$; the shift reappears through $E_L1=L$ and $E_L^21=\theta L+L^2$. Thus no replacement of $f$ by $f+pL$ is required. There is a second precision point in \eqref{eq:Lp2}: $L_{0,p}=\Phi_p(f)-D_p$ is an integral $p$-adic series and must itself be evaluated modulo $p^2$. Replacing $L_{0,p}$ by its reduction modulo $p$ before forming $L_{0,p}+pL_{1,p}$ changes the approximation modulo $p^2$ for the Artin--Hasse series; this is an error, not merely a loss of sharpness.
\end{remark}

\section*{Declaration of generative AI and AI-assisted technologies in the manuscript preparation process}
During the development and preparation of this work, the author used OpenAI's ChatGPT, including Codex, as an interactive research and writing aid to explore candidate approaches, assist in developing and checking algebraic arguments, support literature searches, and help with the drafting and revision of the manuscript. All outputs from these tools were critically reviewed and edited by the author. The author independently verified every mathematical argument, calculation, and reference appearing in the manuscript, made all final mathematical and editorial decisions, and takes full responsibility for the content of the article.

\end{document}